\newtheorem{theorem}{Theorem}
\newtheorem{proposition}{Proposition}[section]
\newtheorem{lemma}[proposition]{Lemma}
\newtheorem{corollary}[proposition]{Corollary}
\theoremstyle{definition}
\newtheorem{definition}[proposition]{Definition}
\newtheorem{remark}[proposition]{Remark}
\newtheorem{notation}[proposition]{Notation}
\newtheorem{example}[proposition]{Example}
\newcommand\dashmapsto{\mapstochar\dashrightarrow}
\newcommand{\PP}{\mathbb{P}}
\newcommand{\p}{\mathbb{P}}
\newcommand{\Z}{\mathbb{Z}}
\renewcommand{\k}{\mathbf{k}}
\newcommand{\Aut}{\mathrm{Aut}}
\newcommand{\PGL}{\mathrm{PGL}}
\newcommand{\A}{\mathbb{A}}
\newcommand{\vpb}{\vphantom{\Big)}}
\DeclareMathOperator{\Bir}{Bir}
\title{On degenerations of plane Cremona transformations}
\author{J\'er\'emy Blanc}
\author{Alberto Calabri}
\address{J\'er\'emy Blanc\\
Departement Mathematik und Informatik\\
Universit\"at Basel\\
 Spiegelgasse 1\\
  4051 Basel, Switzerland}
\email{Jeremy.Blanc@unibas.ch}
\address{Alberto Calabri\\
 Dipartimento di Matematica e Informatica\\
  Universit\`a di Ferrara\\
   Via Machiavelli 30\\
    44121 Ferrara, Italia}
\email{alberto.calabri@unife.it}
\thanks{The authors acknowledge support by the Swiss National Science Foundation Grant  "Birational Geometry" PP00P2\_128422 /1.
The second author is member of G.N.S.A.G.A.\ group of Italian I.N.d.A.M. and warmly thanks Massimiliano Mella for useful discussions.}
\begin{document}
\begin{abstract}This article studies the possible degenerations of Cremona transformations of the plane of some degree into maps of smaller degree.\end{abstract}
\maketitle

\section{Introduction}
Let us fix $\k$ to be the ground field, which will be algebraically closed of characteristic~$0$. The Cremona group $\Bir(\p^2)$ is the group of birational transformations of the plane.

There is a natural Zariski topology on $\Bir(\p^2)$, introduced in \cite{Se} (see $\S\ref{SubSec:TopologyCremona}$) and studied then in many recent texts: \cite{Bl}, \cite{Popov}, \cite{BF}, \cite{PanRit}, \cite{BCM}, \cite{Can13}. For each integer $d$, the subset $\Bir(\p^2)_d$ of elements of $\Bir(\p^2)$ of degree $d$ is locally closed, and has a natural structure of algebraic variety, compatible with the Zariski topology of $\Bir(\p^2)$. However, neither the group $\Bir(\p^2)$ nor the subset $\Bir(\p^2)_{\le d}$ of maps of degree at most $d$ (for $d\ge 2$) have a structure of an (ind)-algebraic variety \cite[Proposition~3.4]{BF}, and the bad structure comes from the degeneration of maps of degree $d$ into maps of smaller degree.

The aim of this article consists exactly in trying to understand this degeneration, and more precisely to describe the closure $\overline{\Bir(\p^2)_d}$ of $\Bir(\p^2)_d$ in $\Bir(\p^2)$, which is a subset of $\Bir(\p^2)_{\le d}$. In particular, the first question one can ask is to understand for which $d$ we have an equality $\overline{\Bir(\p^2)_d}=\Bir(\p^2)_{\le d}$ (already asked in \cite[Remark 2.16]{BF}). The complete answer is the following:

\begin{theorem}\label{Thm:ClosureBirD}
Let $d\ge 1$ be an integer. Then $\overline{\Bir(\p^2)_d}=\Bir(\p^2)_{\le d}$ if and only if $d\le 8$ or $d\in \{10,12\}$.
\end{theorem}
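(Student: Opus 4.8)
The plan is to translate the question into combinatorics of homaloidal nets and then analyse the resulting arithmetic. \emph{Reduction:} up to composition on each side with $\PGL_3$ (under which $\overline{\Bir(\p^2)_d}$ is invariant), a map $f\in\Bir(\p^2)_{d'}$ is encoded by a homaloidal net $\mathcal{M}$ of degree $d'$, i.e.\ a $2$-dimensional linear system of plane curves of degree $d'$ defining a birational map. One shows that $f\in\overline{\Bir(\p^2)_d}$ if and only if there is a flat family $(\mathcal{N}_t)$ of $2$-dimensional linear systems of degree $d$, with $\mathcal{N}_t$ homaloidal for general $t$, such that (for a suitable representative) $\mathcal{N}_0=C+\mathcal{M}$ for some effective curve $C$ of degree $e:=d-d'$, the fixed part that appears in the limit. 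Hence $\overline{\Bir(\p^2)_d}=\Bir(\p^2)_{\le d}$ is equivalent to: for every $d'<d$, every homaloidal net of degree $d'$ occurs as the moving part of such a degeneration.

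\emph{Families that always occur.} Three constructions already give much of the ``if'' direction. First, composing a given $f$ of degree $d'$ on the right with a general algebraic family $(h_t)$ of birational maps of degree $d/d'$ with $h_0=\mathrm{id}$ — such a family exists whenever $d'\mid d$, since one checks directly that $\mathrm{id}$ lies in $\overline{\Bir(\p^2)_e}$ for every $e$ by degenerating a de Jonquières net of degree $e$ so that its $2e-2$ simple base points lie two by two on $e-1$ lines through the major base point — shows $\Bir(\p^2)_{d'}\subset\overline{\Bir(\p^2)_d}$ whenever $d'\mid d$; in particular $\Bir(\p^2)_1\subset\overline{\Bir(\p^2)_d}$ for all $d$. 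Second, a de Jonquières map of degree $d'$ lies in $\overline{\Bir(\p^2)_d}$ for every $d>d'$: degenerate the de Jonquières net of degree $d$ with major base point $q$ of multiplicity $d-1$ so that $C$ becomes $e$ lines through $q$, absorbing $2e$ of the simple base points two per line. Thus only the \emph{non}--de Jonquières homaloidal nets of degree $d'$ with $d'<d$ and $d'\nmid d$ remain, and for each fixed $d$ there are only finitely many such types.

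\emph{A numerical criterion.} To treat the remaining types one needs an effective description of which degenerations $\mathcal{N}_0=C+\mathcal{M}$ are realisable inside an algebraic family of homaloidal nets. Using that a homaloidal net of degree $D$ has base scheme of length $D^2-1$, that its homaloidal type $(m_1\ge\cdots\ge m_k)$ satisfies $\sum m_i=3(D-1)$, $\sum m_i^2=D^2-1$ and $m_1+m_2+m_3\le D$ (plus the proximity inequalities for infinitely near base points), and the Noether--Castelnuovo factorisation of the maps and of the family into quadratic links, one reduces the existence of the degeneration to a combinatorial condition: roughly, the multiplicity sequence of $\mathcal{M}$ together with the multiplicities along $C$ of the part of the base scheme absorbed by $C$, a subscheme of length $e(2d-e)=d^2-d'^2$, must fit into the homaloidal type of some net of degree $d$, with $C$ an honest curve of degree $e$ (so $\sum_p\binom{\mathrm{mult}_p C}{2}\le\binom{e-1}{2}$ for $C$ irreducible, with the obvious variant for reducible $C$, e.g.\ a union of lines). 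Establishing this equivalence is the technical heart.

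\emph{Conclusion and the main obstacle.} With the criterion in hand one checks, for $d\le 8$ and then separately for $d=10$ and $d=12$, that all the finitely many remaining types — among them the Geiser type $(2^6)$ in degree $5$, the Bertini type $(3^7)$ in degree $8$, and the types with three or four equal multiplicities — do occur; the verification is a finite but delicate computation, the point being that for $d\in\{10,12\}$ the room left by $e=d-d'$ is just enough. Conversely, for $d\in\{9,11\}$ and for every $d\ge 13$ one exhibits a ``minimal'' homaloidal net of some degree $d'<d$, with base points in general position and all multiplicities equal (generalising Geiser and Bertini), that does \emph{not} occur: a putative family $(\mathcal{N}_t)$ degenerating to it would force the absorbed part of the base scheme to concentrate along a curve $C$ of degree $e$ through the general base points of $\mathcal{M}$ with prescribed multiplicities, and combining the genus bound for $C$ with $\sum m_i=3(d-1)$, $\sum m_i^2=d^2-1$, $m_1+m_2+m_3\le d$ for $\mathcal{N}_t$ leaves no admissible type once $d=9$, $d=11$ or $d\ge13$. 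The genuinely hard part is the numerical criterion together with its sharpness: controlling exactly which concentrations of the base scheme onto a fixed curve are realisable — via flat limits of linear systems and the behaviour of degree under composition of Cremona maps — and then carrying out the tight arithmetic that isolates precisely the set $\{1,2,\dots,8,10,12\}$.
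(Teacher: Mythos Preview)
Your proposal outlines a plausible strategy but leaves the decisive steps unproved, and contains at least one concrete error. You assert that a homaloidal type of degree $D$ satisfies $m_1+m_2+m_3\le D$; the correct (Noether) inequality is $m_1+m_2+m_3\ge D+1$, and your obstruction argument for $d\ge 13$ explicitly invokes the wrong direction. More seriously, the ``numerical criterion'' you announce as the technical heart is never stated, and it is not clear such a clean if-and-only-if combinatorial criterion exists: knowing that multiplicities fit into some homaloidal type of degree $d$ does not by itself produce a flat family of homaloidal nets with the prescribed limit, and conversely the actual constraints on a degeneration are subtler than a single numerical fit. Your two explicit constructions (divisibility and de~Jonqui\`eres) are correct but cover far too little --- for $d=10$ one must place the types $(8;3^7)$ and $(8;4^3,2^3,1^3)$ in $\overline{\Bir(\p^2)_{10}}$, and neither is de~Jonqui\`eres nor has $8\mid 10$. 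Finally, your obstruction relies on homaloidal types ``with all multiplicities equal'', but such types exist only in isolated degrees (e.g.\ $(5;2^6)$, $(8;3^7)$), so they cannot furnish an obstruction for every $d\ge 13$.

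The paper proceeds quite differently and avoids any general criterion. For existence it composes $\gamma$ with explicit one-parameter families $\nu(t)$ of quadratic or quintic Cremona maps with $\nu(0)=\mathrm{id}$ and base points chosen among those of $\gamma$ (Lemmas~\ref{Lemm:QuadraticDeg}, \ref{Lemm:QuinticDeg}); the degree of $\gamma\circ\nu(t)^{-1}$ is then read off from the multiplicities, and a finite check over Table~\ref{tab:Hudson11} handles $d\le 8$ and $d\in\{10,12\}$. For non-existence it proves a sharp necessary condition for a map with proper base points to lie in $\overline{\Bir(\p^2)_{d+1}}$ (Proposition~\ref{Prop:Degen1}), obtained not from linear-system combinatorics but from the factorisation of the Jacobian $J(\hat f)=\alpha^3 J(f)$ inside the closed subvariety $\Bir_\Lambda\subset\Bir_{d+1}$; one then exhibits, for each $d\ge 12$ and for $d=8,10$, an explicit proper homaloidal type (Example~\ref{ex3m}) violating this condition.
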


This theorem shows that $\Bir(\p^2)_{8}$ is not contained in the closure of $\Bir(\p^2)_9$, but is in the closure of $\Bir(\p^2)_{10}$, and the same holds replacing $8,9,10$ with $10,11,12$. One can then ask for a birational map of degree $d$ what is the minimum $k$ needed such that it belongs to the closure of $\Bir(\p^2)_{d+k}$. As we will show, there is an upper bound for $k$, depending on $d$, but no universal bound:

\begin{theorem}\label{Theorem:NoBoundOnK}
For each integer $k\ge 1$ there exists an integer $d$ and a birational map $\varphi$ of degree $d$ such that $\varphi$ does not belong to $\overline{\Bir(\p^2)_{d+i}}$ for each $i$ with $1\le i\le k$.

Every birational map of degree $d\ge 1$ is contained in $\overline{\Bir(\p^2)_{d+i}}$ for some $i$ with $1\le i\le \max\{1,\frac{d}{3}\}$.
\end{theorem}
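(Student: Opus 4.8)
The plan is to reduce both statements to the combinatorics of homaloidal types. A birational map $\varphi$ of degree $d$ has a homaloidal type $(d;m_1,\dots,m_r)$, where the $m_j$ are the multiplicities at the (possibly infinitely near) base points of $\varphi$, subject to $\sum_j m_j=3(d-1)$, $\sum_j m_j^2=d^2-1$ and Noether's inequalities. If $\psi_t\in\Bir(\p^2)_{D}$ converges to $\varphi$ with $d=\deg\varphi<D$, then the three forms defining $\psi_t$ acquire in the limit a common factor $F$ of degree $D-d$ whose removal leaves the characteristic net of $\varphi$; conversely $\varphi\in\overline{\Bir(\p^2)_{D}}$ as soon as one exhibits a homaloidal type of degree $D$, realised by a legitimate constellation of base points, together with a specialisation of that constellation for which a curve $F$ of degree $D-d$ splits off as a fixed component, with residual system exactly the net of $\varphi$. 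Both assertions become questions about enlarging the type of $\varphi$ by a fixed curve of controlled degree.

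For the bound, given $\varphi$ of degree $d\ge2$ I would produce such an enlargement explicitly. Fix an integer $i$ and a curve $F$ of degree $i$ passing through the base points $p_j$ of $\varphi$ with multiplicities $c_j$; adjoining these multiplicities to the base points of $\varphi$ together with $3i-\sum_j c_j$ further base points placed on $F$ gives a candidate type $(d+i;\,m_1+c_1,\dots,m_r+c_r,\dots)$, and letting those further base points run onto $F$ makes $F$ a fixed component, with residual the net of $\varphi$. The two numerical identities for this candidate type collapse to a single relation of the shape $2\sum_j c_j m_j+\sum_j c_j^2-\sum_j c_j\ \approx\ i(2d+i-3)$, while the mere existence of $F$ through the \emph{prescribed} points $p_j$ costs only $\sum_j\binom{c_j+1}{2}\le\binom{i+2}{2}-1$ (the auxiliary base points lying on $F$ are free). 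Using $\sum_j m_j=3(d-1)$ and $\sum_j m_j^2=d^2-1$ one checks that these two constraints are compatible precisely when $i$ may be taken of size $d/3$; optimising the partition of the $c_j$ among the $p_j$ --- in particular exploiting the sharper Noether estimates at the three largest multiplicities --- brings the value down to $\max\{1,d/3\}$, with $i=1$ (a single fixed line) sufficing for $d\le5$. It then remains to verify, by running the Noether reduction, that the candidate type is genuinely realisable.

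For the first assertion I would take, for each $k\ge1$, a degree $d=d(k)$ growing with $k$ and a realisable homaloidal type of degree $d$ that is as \emph{balanced} as possible --- multiplicities all close to $(d+1)/3$ and only about eight or nine base points, the extreme case being types like $(17;6^8)$ and their near-analogues for other $d$. For such a type the displayed relation, combined with $\sum_j\binom{c_j+1}{2}\le\binom{i+2}{2}-1$ and the small number $r$ of base points (which via $\sum_j c_j^2\ge(\sum_j c_j)^2/r$ forces the $c_j$ to be too concentrated), becomes unsolvable for every $i$ below a threshold of order $d/3$; choosing $d>3k$ one gets a $\varphi$ not lying in $\overline{\Bir(\p^2)_{d+i}}$ for any $i\in\{1,\dots,k\}$.

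The main obstacle, in both directions, is the passage between numerics and geometry. In the construction one must certify that the enlarged type is actually realised by a legitimate constellation --- running the Noether reduction and checking that no base point is forced into a forbidden position --- and one must handle carefully the existence of $F$ through the given base points of $\varphi$, which may themselves be in special position and on which the auxiliary base points, including infinitely near ones, have to be placed. For the rigidity examples the delicate point is to rule out \emph{every} fixed-curve configuration in degrees $d+1,\dots,d+k$, not merely the naive ones, and to confirm that the balanced types exhibited are simultaneously realisable and extremal; establishing these realisability statements precisely is where I expect the real work to lie.
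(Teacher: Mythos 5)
Your overall strategy (reduce everything to homaloidal-type numerics plus ``fixed curve splits off'' degenerations, and use balanced types with few base-points for the rigidity examples) is in the right spirit -- the paper's rigid examples are indeed Halphen/Bertini-related types with nine base-points and multiplicities close to $d/3$ -- but both halves of your argument have gaps exactly at the points you defer. For the bound $i\le\max\{1,d/3\}$, your ``conversely $\varphi\in\overline{\Bir(\p^2)_D}$ as soon as one exhibits a type together with a specialisation for which $F$ splits off'' is not a theorem: membership in the closure requires an actual morphism from a variety to $\Bir(\p^2)$ specialising to $\varphi$, not just compatible numerics for a reducible limit system. The paper produces such families explicitly, by composing $\varphi$ with a one-parameter family of quadratic (resp.\ quintic, quartic) maps degenerating to the identity; composing with the quadratic family shows that two base-points of multiplicities $m_1,m_2$ with $m_1+m_2=d-k$ force $\varphi$ into $\overline{\Bir(\p^2)_{d+k}}$. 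The whole content of the $d/3$ bound is then the purely numerical fact (a consequence of Noether's inequality $m_1+m_2+m_3\ge d+1$, with a separate argument to exclude the case $m_1+m_2=d$) that every type of degree $d\ge 12$ has two multiplicities with $m_1+m_2>2d/3$; your dimension count with a curve $F$ of degree $i$ and multiplicities $c_j$ never isolates this fact and by itself produces no family. One also needs the density argument inside each irreducible component to pass from maps with proper base-points to arbitrary $\varphi$.

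For the first assertion the gap is more serious: your obstruction is a list of numerical constraints ($\sum_j\binom{c_j+1}{2}\le\binom{i+2}{2}-1$, the Noether relation, $\sum c_j^2\ge(\sum c_j)^2/r$) which you have not shown to be \emph{necessary} for $\varphi\in\overline{\Bir(\p^2)_{d+i}}$. Ruling out every degeneration is the technical core of the paper: one needs the closed subsets $\Bir_\Lambda\subset\Bir_{d+i}$ defined via the factorisation of the Jacobian into curves contracted onto base-points of an inverse, one must choose $\varphi$ so that both $\varphi$ and $\varphi^{-1}$ have only proper base-points (so that exactly nine irreducible curves of known degrees are contracted), and the final contradiction is geometric (irreducibility of the net attached to a proper type at general points), not purely arithmetic. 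In addition, you need an infinite supply of suitable proper, self-dual balanced types; the paper constructs these as $(36a^2+1;12a^2+6a,12a^{2\,(\times 7)},12a^2-6a)$ via an explicit element $(\nu B)^{2a}$ of the Weyl group, and its estimate only yields rigidity for $i\le a\sim\sqrt{d}/6$ -- your claimed threshold ``of order $d/3$'' for types like $(17;6^8)$ is asserted without proof and is stronger than what the method is known to give (though any unbounded window suffices for the theorem). As it stands, the proposal is a programme whose two decisive steps -- constructing the degenerating families and proving the necessity of the numerical constraints -- are missing.
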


The two theorems are obtained by a detailed study of the possible degenerations of birational maps and of the relation with their base-points. For example, we give a criterion that determines whether a birational map $\varphi$ of degree $d$ with only proper base-points, no three of them collinear, belongs to $\overline{\Bir(\p^2)_{d+1}}$: it is the case if and only if $\varphi$ has multiplicity $m_1$ and $m_2$ at two points of $\p^2$ such that $m_1+m_2=d-1$ (Corollary~\ref{Coro:GeneralLimitdplus1}). We also give three propositions that provide existence of degenerations associated to the base-points of a birational map (Propositions~\ref{prop:mi+mj=d-k}, \ref{prop:5points} and \ref{prop:5points3collinear}).

\bigskip

Let us finish this introduction by describing the situation for the subgroup $\Aut(\A^2)\subset \Bir(\p^2)$ consisting of automorphisms of the affine plane. The question of degeneration was already studied in this case, see for example \cite{Fur97}, \cite{EF04}, \cite{Fur13}. Because of Jung's theorem, every element  $f\in\Aut(\A^2)$ has a multidegree $(d_1,\dots,d_k)$ which satisfies $\deg(f)=\prod_{i=1}^k d_i$, and its length is defined to be the integer $k$. By \cite{Fur02}, the length of an automorphism is lower semicontinuous. In particular, elements of multidegree $(2,2)$ are not in the closure of $\Aut(\A^2)_{5}$, even if they are in the closure of $\Bir(\p^2)_5$, and one can construct many of such examples, using the rigidity of $\Aut(\A^2)$. The closure of the subvarieties of some given multidegrees are however not well understood, and quite hard to describe. See \cite{Fur13} for some descriptions and conjectures in the cases of length $\le 2$
and \cite{EL13} for a recent work in case of length $3$.


\section{Degree, multiplicities of base-points and homaloidal types}
\begin{definition}
Let $$\varphi\colon [x:y:z]\dashmapsto [f_0(x,y,z):f_1(x,y,z):f_2(x,y,z)]$$ be a birational map of $\p^2$, where $f_0,f_1,f_2$ are homogeneous polynomials of degree $d$ without common factor (of degree $\ge 1$).
 We will say that the \emph{degree of $\varphi$ is $d$} and that the \emph{homaloidal type} of $\varphi$ is $$(d;m_1,m_2,\ldots,m_r)$$ if the linear system of $\varphi$, which is given by the set of curves  of equation $$\sum_{i=0}^2 \lambda_i f_i(x,y,z)=0,$$ for $\lambda_0,\lambda_1,\lambda_2\in \k$, has base-points $p_1,\dots,p_r$ of multiplicity $m_1,\dots,m_r$. (Here by base-points we include all such, including infinitely near base-points).
 \end{definition}
 \begin{remark}
If $(d;m_1,m_2,\ldots,m_r)$ is the homaloidal type of a birational map of $\p^2$, the integers $m_i$ and $d$ satisfy the following equations
\begin{align} \label{Neq}
& \sum\limits_{i=1}^r m_i=3(d-1),
&
&\sum\limits_{i=1}^r (m_i)^2=d^2-1,
\end{align}
that are the classical \emph{Noether equalities}, and directly follow from the fact that the map is birational (see for instance \cite[\S2.5, page 51]{alberich}).
\end{remark}
We will use the following notation of \cite[Definition 5.2.1, page 130]{alberich}:
 \begin{definition}
 Let $d,m_1,\dots,m_r$ be integers. We will say that $T=(d;m_1,m_2,\ldots,m_r)$ is a \emph{homaloidal type of degree $d$} if it satisfies the Noether equalities (\ref{Neq}). 
 
 If there exists moreover a birational map $\varphi\in \Bir(\p^2)$ of homaloidal type $T$, then we say that $T$ is \emph{proper}, and otherwise we say that $T$ is \emph{improper}.
\end{definition}

Note that the type $(3;1,1,1,1,1,1,1,-1)$ is improper, as it contains a negative integer. Similarly, $(5;3,3,1,1,1,1,1,1)$ is another improper type, as the linear system associated to such a type would be reducible (the line through the two points of multiplicity $3$ would be a fixed component). 

In order to decide whether a homaloidal type is proper or improper, there is what is called the Hudson's test (see \cite[Definition 5.2.15, page 134]{alberich} and \cite[Definition 25 and the appendix]{BCM}). We will explain why this algorithm works, using a modern language, which is a simplified version of the action of $\Bir(\p^2)$ on a hyperbolic space of infinite dimension given by the Picard-Manin space (the interest reader can have a look at \cite[Section~3]{Can11} or \cite[Section~5]{BC}).
\subsection{Hudson's test}\label{SubSecHud}
Let us consider the free $\Z$-module $V$ of infinite countable rank, whose basis is $\{e_i\}_{i\in \mathbb{N}}$. Each homaloidal type $(d;m_1,\dots,m_r)$ corresponds to the element $de_0-\sum_{i=1}^r m_i e_i \in V$. We then consider the scalar product on $V$ given by $(e_0)^2=1$, $(e_i)^2=-1$ for $i\ge 1$ and $e_i\cdot e_j=0$ for $i\not=j$. This corresponds to the intersection of divisors on the blow-ups of $\p^2$ associated to the base-points of the corresponding maps.

We denote by $\sigma_0$ the automorphism of $V$ given by the reflection by the root $e_0-e_1-e_2-e_3$:
$$\begin{array}{llll}
\sigma_0(e_0)=2e_0-e_1-e_2-e_3,& \sigma_0(e_i)=e_i\mbox{ for }i\ge 4,\\
\sigma_0(e_1)=e_0-e_2-e_3,& \sigma_0(e_2)=e_0-e_1-e_3,& \sigma_0(e_3)=e_0-e_1-e_2.\end{array}$$
This corresponds exactly to the action of the standard quadratic map $$\sigma\colon [x:y:z]\dashmapsto [yz:xz:xy]$$
on the blow-up of the three points $[1:0:0]$, $[0:1:0]$ and $[0:0:1]$.

We then define $W$ as the group of automorphisms of $V$ generated by $\sigma_0$ and by the permutations of the $e_i$ fixing $e_0$. A simple calculation shows that $W$ preserve the intersection form and the canonical form given by $de_0-\sum m_ie_i\to 3d-\sum m_i$. Hence, the group $W$ preserves the set of homaloidal types. We then have the following:

\begin{proposition}\label{Prop:HudsonW}Let $T=(d;m_1,m_2,\ldots,m_r)$ be a homaloidal type. 

$(i)$ The type $T$ is proper if and only if it belongs to the orbit $W(e_0)$.

$(ii)$ If $T$ is proper, there is a dense open subset in $U\subset (\p^2)^r$ such that for each $(p_1,\dots,p_r)\in U$, there exists a birational transformation $\varphi$ having degree $d$ and which base-points are the $p_i$ with multiplicity $m_i$.
\end{proposition}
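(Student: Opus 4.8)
The plan is to prove $(i)$ and $(ii)$ together, by induction on $d$. Besides the Noether equalities, the inputs are the Noether--Castelnuovo theorem --- every birational map of $\p^2$ is a composition of elements of $\PGL_3$ and of quadratic involutions, each based at three (possibly infinitely near) points --- together with two classical facts about homaloidal types, standard and themselves consequences of the validity of Hudson's test: every $v\in W(e_0)$, written $v=de_0-\sum m_i e_i$, has $d\ge1$ and all $m_i\ge0$; and a homaloidal type $(d;m_1,\dots,m_r)$ of degree $d\ge2$ with $m_1\ge\cdots\ge m_r$ satisfies $m_1+m_2+m_3\ge d+1$ (Noether's inequality, see \cite{alberich}). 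The computational key is the standard description of how a quadratic transformation acts on a homaloidal net: if $\psi$ is birational of homaloidal type $S$ and $\tau$ is a quadratic involution whose three base-points are the base-points of $\psi$ occupying positions $1,2,3$ in a chosen ordering (multiplicity $0$ allowed, so $\tau$ may be based off the base locus of $\psi$), then $\psi\circ\tau$ has homaloidal type $\sigma_0(S)$, regarded as a homaloidal type; whereas composing with an element of $\PGL_3$ leaves the homaloidal type unchanged, since such a map moves the base-points bijectively and preserves all proximity relations. Equivalently, in the Picard--Manin space $\psi^\ast$ sends the class of a line to $S$, and $\tau^\ast$ acts as $\sigma_0$ up to a permutation of the $e_i$.

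For the ``only if'' part of $(i)$, let $\varphi$ be birational of homaloidal type $T$, and factor it by Noether--Castelnuovo as a composition of quadratic involutions and of elements of $\PGL_3$. Tracking the homaloidal type through this factorisation by means of the key above, and starting from the homaloidal type $e_0$ of the identity, we exhibit $T$ as the image of $e_0$ under a product of copies of $\sigma_0$ and of permutations of the $e_i$; hence $T\in W(e_0)$.

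For the converse, and for $(ii)$, let $T\in W(e_0)$ and write $T=(d;m_1,\dots,m_r)$. If $d=1$ then $T=e_0$ (discard the multiplicities $0$), realized by the identity. If $d\ge2$, order so that $m_1\ge m_2\ge m_3\ge\cdots$; by Noether's inequality $m_1+m_2+m_3\ge d+1$, so $T':=\sigma_0(T)$, with $\sigma_0$ acting on the first three entries, has degree $2d-m_1-m_2-m_3\le d-1$ and still lies in $W(e_0)$. By the inductive hypothesis $T'$ is proper, realized over a dense open $U'\subseteq(\p^2)^{r'}$; its entries at positions $1,2,3$ are $d-m_2-m_3,\ d-m_1-m_3,\ d-m_1-m_2$, all $\ge0$. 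Now pick a general point of $U'$, giving a birational map $\psi$ of type $T'$, and let $\tau$ be the quadratic involution based at points $p_1,p_2,p_3$, where $p_i$ is the base-point of $\psi$ at position $i$ if the $i$-th entry of $T'$ is positive and a general point of $\p^2$ otherwise. For a general choice of these data, $\varphi:=\psi\circ\tau$ has exactly the base-points and the multiplicities predicted by $\sigma_0(T')=T$; hence $T$ is proper, and letting the parameters vary exhibits the dense open set $U\subseteq(\p^2)^r$ required in $(ii)$. This closes the induction.

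The step I expect to be the main obstacle is this last claim: that a \emph{general} choice of $\psi$ and of the auxiliary points makes $\varphi=\psi\circ\tau$ behave exactly as the algebra predicts. One must rule out all accidental coincidences --- an infinitely near or proper base-point of $\psi$ lying on the locus contracted by $\tau$ in a way that drops the degree further, two of the resulting base-points coalescing or three of them becoming collinear, an expected multiplicity dropping to zero --- which amounts to showing that these bad configurations cut out a proper Zariski-closed subset of the parameter space (a product of $U'$ with finitely many copies of $\p^2$), whose complement is the desired dense open set. One should also order the induction so as not to invoke $(ii)$ before it is available: it is enough to use the already-proved ``only if'' direction of $(i)$ to see that $T'\in W(e_0)$, having smaller degree, is proper, hence honestly realized, hence of non-negative homaloidal type --- which is what lets us safely discard zero multiplicities and take positions $1,2,3$ as above.
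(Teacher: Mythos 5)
There is a genuine gap, and it lies exactly where your induction differs from the paper's. You induct on the degree $d$, and the descent step rests on two ``inputs'': that every $v\in W(e_0)$ has $d\ge 1$ and all $m_i\ge 0$, and Noether's inequality $m_1+m_2+m_3\ge d+1$ for the three largest entries. But Noether's inequality is only valid for types with non-negative multiplicities (e.g.\ $(3;1^7,-1)$ satisfies the Noether equalities and violates it, and $(5;3,3,1^6)$ has non-negative entries, satisfies the equalities, yet has $m_1+m_2=6>d$, so $\sigma_0$ applied to it produces a negative entry). Since $W$ preserves only the two quadratic/linear forms, the Noether equalities are the only constraints you get for free on an arbitrary element of $W(e_0)$; the non-negativity you invoke is not a formal consequence of orbit membership, and it is precisely what makes the degree drop under $\sigma_0$. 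You justify it as ``a consequence of the validity of Hudson's test'', but in this paper the validity of Hudson's test (Lemma~\ref{lem:proper}) is \emph{deduced from} Proposition~\ref{Prop:HudsonW}, so as written your argument is circular; and the fact itself is essentially equivalent to the hard implication you are proving (it is exactly the statement that the orbit $W(e_0)$ avoids the ``improper'' solutions of the Noether equalities). Your closing remark does not repair this: knowing the ``only if'' direction gives nothing about an abstract $v\in W(e_0)$, and to know that $T'=\sigma_0(T)$ has smaller degree you already need the inequality for $T$, whose entries you cannot yet assume are non-negative since $T$ is not yet known to be proper.

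The paper avoids the issue by inducting not on the degree but on the length of a word $f=a_k\sigma_0a_{k-1}\cdots\sigma_0 a_1\in W$ with $f(e_0)=T$: the shorter word $f'=a_{k-1}\sigma_0\cdots a_1$ gives a type $f'(e_0)$ (of possibly \emph{larger} degree) which is realized with general base-points by the induction hypothesis, and composing with a quadratic map based at the relevant three points (adding general points where a multiplicity is zero) realizes $T$. No positivity claim or Noether inequality is needed; non-negativity of the entries of elements of $W(e_0)$ comes out as a consequence. If you want to keep an induction on degree, you must first prove independently that every element of $W(e_0)$ has non-negative entries, which is the real content. Your final paragraph's ``main obstacle'' (checking that a general choice makes $\psi\circ\tau$ behave as $\sigma_0$ predicts) is a secondary issue --- it is handled by the same general-position argument the paper uses --- but it is also left unproved in your write-up.
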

\begin{proof}
Suppose first that $T$ is proper, which corresponds to saying that there exists a map $\varphi\in \Bir(\p^2)$ of type $T$. By Noether-Castelnuovo theorem, $\varphi$ can be written as $$\varphi=\alpha_k\sigma\alpha_{k-1}\dots \alpha_2\sigma\alpha_1,$$
where $\sigma$ is the standard quadratic involution and $\alpha_1,\dots,\alpha_k\in \Aut(\p^2)=\PGL(3,\k)$ (See for example \cite[Chapter V, \S5, Theorem 2, p. 100]{Sha} or \cite[Chapter 8]{alberich}).

For each $i$, we write $\varphi_i=\alpha_i \sigma \alpha_{i-1}\dots \alpha_2\sigma\alpha_1$, and observe that $\varphi_1$ is linear and $\varphi_k=\varphi$. The homaloidal type of $\varphi_i$ is obtained from the one of $\varphi_{i-1}$ by applying $\sigma_0$ and a permutation of coordinates. Hence, the homaloidal type of $\varphi$ belongs to $W(e_0)$.

We now take an element $f\in W$ that we can write as  
$$f=a_k\sigma_0a_{k-1}\dots a_2\sigma_0a_1,$$
where $\sigma_0$ is the automorphism of $V$ defined before and $a_1,\dots,a_k$ are permutations of the $e_i$ fixing $e_0$. We prove by induction on $k$ that the type of $f(e_0)$ satifies assertion $(ii)$, which will achieve the proof.

If $k=1$ or $k=2$, the result is obvious, as the type is of degree $1$ or of degree $2$ with three simple base-points. 
We can then assume $k>2$ and show the result using induction hypothesis.

In order to simplify the proof, we will assume that $a_k$ is the identity, since permutations of points does not change the result of $(ii)$. We then write $f(e_0)=(d,m_1,\dots,m_r)$ and $f'=a_{k-1}\sigma_0\dots a_2\sigma_0a_1$, which implies that
$$f'(e_0)=(2d-m_1-m_2-m_3,d-m_2-m_3,d-m_1-m_3,d-m_1-m_2,m_4,\dots,m_r).$$
Using induction hypothesis, we obtain a dense open subset $U\subset (\p^2)^r$ such that for each $(p_1,\dots,p_r)\in U$, there exists a birational transformation $\varphi$ having degree $2d-m_1-m_2-m_3$ and which base-points are the $p_i$ with multiplicities given by $f'(e_0)$. This follows from the induction hypothesis and the fact that we can add general points if one of the multiplicities $d-m_2-m_3,d-m_1-m_3,d-m_1-m_2$ is zero.

We denote by $V\subset U$ the open subset where no three of the points $p_i$ are collinear. For each $(p_1,\dots,p_r)\in V$ we take the map $\varphi$ associated to these points and define $\hat{\varphi}=\varphi\psi$ where $\psi=\tau\sigma\tau^{-1}$ and  $\tau\in \Aut(\p^2)$ sends the three base-points of $\sigma$ onto $p_1,p_2,p_3$. Then $\hat{\varphi}$ is a birational map of degree $d$ with multiplicities $m_1,\dots,m_r$ at $p_1,p_2,p_3,\psi(p_4),\dots,\psi(p_r)$ respectively. We can the define $\hat{V}\subset (\p^2)^r$ as the open set $$\hat{V}=\{(q_1,\dots,q_r)\mid (q_1,q_2,q_3,\psi(q_4),\dots,\psi(q_r))\in V\};$$
that concludes the proof.
\end{proof}
\begin{remark}\label{Rem:MatrixInv}
Following Proposition~\ref{Prop:HudsonW}, we can associate to each element $\varphi\in \Bir(\p^2)$ an element $g\in W$, unique up to permutations at source and target, such that $g(e_0)$ corresponds to the type of $\varphi$. The matrix $g$ corresponds to the characteristic matrix studied in \cite[Chapter 5]{alberich} and gives the curves contracted by $g$ and its inverse. In particular, $g^{-1}\in W$ is the map associated to $\varphi^{-1}$, so the homaloidal type of $\varphi^{-1}$ is obtained by computing $g^{-1}(e_0)$.
\end{remark}

Using Proposition~\ref{Prop:HudsonW}, one obtain the classical algorithm (Hudson's test) that decides whether a homaloidal type is proper or improper. Let us recall how it works: 
\begin{enumerate}\item
Taking a homaloidal type $(d;m_1,\dots,m_r)$ with $d\ge 2$, and all integers $m_i$ non-negative and order them so that  $m_1\ge m_2\ge m_3\ge \dots \ge m_r$.
\item
We then replace $(d;m_1,\dots,m_r)$  with $(d-\epsilon,m_1-\epsilon,m_2-\epsilon,m_3-\epsilon,m_4,\dots,m_r)$, where $\epsilon=m_1+m_2+m_3-d$, and then go back an apply the first step.
\item We end when we reach $(1;0,\dots,0)$, in which case the test is fullfilled, or when at least one $m_i$ is negative, in which case the test is not fullfilled.
\end{enumerate}
Then, we recall the following result.
\begin{lemma}\label{lem:proper} A homaloidal type is proper if and only if it satisfies Hudson's test.
\end{lemma}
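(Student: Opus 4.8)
The plan is to connect Hudson's test directly to the structure established in Proposition~\ref{Prop:HudsonW}, namely that a type $T$ is proper precisely when $T\in W(e_0)$. The key observation is that one step of Hudson's test is nothing but the application of the automorphism $\sigma_0$ of $V$, possibly preceded by a permutation of the $e_i$ fixing $e_0$, to the vector $de_0-\sum m_ie_i$. Indeed, after ordering so that $m_1\ge m_2\ge m_3\ge\dots\ge m_r$ and setting $\epsilon=m_1+m_2+m_3-d$, a direct computation shows that $\sigma_0(de_0-\sum m_ie_i)=(d-\epsilon)e_0-(m_1-\epsilon)e_1-(m_2-\epsilon)e_2-(m_3-\epsilon)e_3-m_4e_4-\cdots-m_re_r$, which is exactly the replacement prescribed in step~$(2)$. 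So Hudson's test is the algorithm that repeatedly applies $\sigma_0$ (together with a sorting permutation) and checks whether one eventually lands on $e_0=(1;0,\dots,0)$.

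First I would make precise what ``satisfies Hudson's test'' means: the iteration, starting from $T$, terminates at $(1;0,\dots,0)$ rather than producing a negative multiplicity or cycling. I would observe that each step strictly decreases $d$ whenever $\epsilon>0$, and that $\epsilon\le 0$ forces $T$ to already be, up to reordering and removing simple base points, one of the ``terminal'' types of degree $1$ or $2$; hence the test always terminates after finitely many steps, either in success or in failure. This guarantees the algorithm is well-defined.

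Next, for the ``only if'' direction: suppose $T$ is proper, so $T=g(e_0)$ for some $g\in W$. By the argument in the proof of Proposition~\ref{Prop:HudsonW} (writing $\varphi=\alpha_k\sigma\alpha_{k-1}\cdots\alpha_2\sigma\alpha_1$ via Noether--Castelnuovo), the type of $\varphi$ is reached from $e_0$ by a sequence of applications of $\sigma_0$ and permutations. The content here is that this sequence can be taken to be the ``greedy'' one used by Hudson's test, i.e.\ that applying $\sigma_0$ in the canonical way after sorting the multiplicities indeed decreases the degree and never produces a negative entry when $T$ is proper; equivalently, that the reduction to $e_0$ can always be performed while staying in the set of proper types, which are all nonnegative by definition of being realized by an actual linear system. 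I would argue that since $W$ acts on the set of vectors of self-intersection $1$ and anticanonical degree $3$, and the proper ones are exactly those in $W(e_0)$, the de~Jonqui\`eres-type inequality $m_1+m_2+m_3>d$ (Noether's inequality, which holds for any proper type that is not linear) shows $\epsilon>0$ at each non-terminal stage, so the greedy reduction strictly lowers $d$ and terminates at $e_0$.

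Conversely, if $T$ satisfies Hudson's test, then by the computation above $T$ is obtained from $(1;0,\dots,0)=e_0$ by applying the inverses of the sorting-permutation-then-$\sigma_0$ operations, all of which lie in $W$ (since $\sigma_0^2=\mathrm{id}$ and permutations fixing $e_0$ generate a subgroup of $W$); hence $T\in W(e_0)$ and $T$ is proper by Proposition~\ref{Prop:HudsonW}$(i)$. The main obstacle I anticipate is the ``only if'' direction: one must rule out that the greedy reduction of a proper type gets stuck, i.e.\ verify that for a proper non-linear type the top three multiplicities always satisfy $m_1+m_2+m_3>d$ (so $\epsilon\ge 1$) and that the resulting type, while possibly not yet reduced, is still proper; this is exactly the classical Noether inequality together with the fact that $\sigma_0$ preserves properness, and I would isolate it as the technical heart of the proof.
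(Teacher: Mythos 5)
Your proposal is correct and takes essentially the same route as the paper: each step of Hudson's test is the application of an element of $W$ (a sorting permutation followed by $\sigma_0$), Noether's inequality gives $\epsilon\ge 1$ so the degree strictly drops until one reaches $(1;0,\dots,0)$ or a negative entry, and Proposition~\ref{Prop:HudsonW}$(i)$ translates membership in $W(e_0)$ into properness. The only (immaterial) differences are organizational: you prove the ``only if'' direction directly by keeping the intermediate types proper along the greedy reduction, while the paper argues by contrapositive (a failed test ends at a type with a negative entry, hence improper, hence the original type is improper), and the paper uses that $m_1+m_2+m_3\ge d+1$ holds for \emph{every} homaloidal type with nonnegative entries and $d\ge 2$ (a consequence of the Noether equalities alone), which is what guarantees termination of the test on arbitrary, possibly improper, inputs.
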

\begin{proof}
We observe first that if $(d;m_1,\dots,m_r)$ is a homaloidal type with $d\ge 2$, $m_1\ge m_2\ge \dots \ge m_r\ge 0$, then $m_1+m_2+m_3\ge d+1$. This was already observed by Noether and is a direct consequence of the Noether equalities (see for example \cite[Corollary 2.6.7, page~55]{alberich}). Hence, the integer $\epsilon$ in the test above is always non-negative. 

If $d=1$, the only possibilities for the $m_i$ is to be zero. Hence, the algorithm above always has a end: either we reach $d=1$ with all $m_i$ being zero or at some point one $m_i$ is negative.

Note that Hudson's test consists of applying elements of $W$ to $de_0-\sum m_ie_i$. If the test is fulfilled, then the type is in the image of $W$, and is thus a proper homaloidal type by Proposition~\ref{Prop:HudsonW}. If the test is not fullfilled, then we finish with a type which is improper as it contains a negative integer. The type from which we started is then in the same orbit as this one by $W$ and is thus improper by Proposition~\ref{Prop:HudsonW}.
\end{proof}

\begin{remark}\label{rem:Hudson}
One may stop Hudson's test as soon as one reaches in step (1) a homaloidal type which is already known to be either proper or improper, and accordingly the test is either fulfilled or not fulfilled.
\end{remark}
\begin{remark}\label{Rem:MatrixInv2}
Applying Hudson's test to the type of a birational map $\varphi$ we also obtain the matrix of $\varphi$, which is the element of $W$ corresponding to it (Remark~\ref{Rem:MatrixInv}). This shows in particular that the homaloidal type of $\varphi^{-1}$ only depends on the homaloidal type of $\varphi$ and not of the position of the base-points, and also provides a method to compute the type of the inverse (already explained in \cite[Definition 5.4.24, page 156]{alberich}).\end{remark}

\begin{example}\label{Exa:Types}
Applying the algorithm corresponding to Hudson's test, one easily finds all linear systems of small degree. We use the notation $m^r$ to write $m,\dots,m$ ($r$ times).

For each degree $d\ge 2$, the type $(d;d-1,1^{2d-2})$ is a de Jonqui\`eres homaloidal type. For $d\ge 4$, we also have another type, which is $(d;d-2,2^{d-2},1^3)$. In degree $d\le 11$, all other proper homaloidal types are given in Table~\ref{tab:Hudson11}.
\begin{table}[h]
$$
\begin{array}{llll}
\hline\hline
\vpb
(5;2^6)\\
\hline
\vpb
(6;3^3,2,1^4)& (6;3^2,2^4,1)\\
\hline
\vpb
(7;4,3^3,1^5)&(7;4,3^2,2^3,1^2)& (7;3^4,2^3)\\
\hline
\vpb
(8;5,3^3,2^2,1^3)&(8;5,3^2,2^5)&(8;4^3,3,1^6)&(8;4^3,2^3,1^3)\\
\vpb
(8;4^2,3^2,2^3,1)&(8;4,3^5,1^2)&(8;3^7)\\
\hline
\vpb
(9;6,3^4,2,1^4)& (9;6,3^3,2^4,1)& (9;5,4^3,1^7)& (9;5,4^2,3,2^3,1^2)\\
\vpb
(9;5,4,3^4,1^3)& (9;5,4,3^3,2^3)&(9;4^4,2^4)&(9;4^3,3^3,2,1)\\
\hline
\vpb
(10;7,3^5,1^5)& (10;7,3^4,2^3,1^2) & (10;6,4^3,2^3,1^3)& (10;6,4^2,3^3,1^4)\\
\vpb
(10;6,4^2,3^2,2^3,1) & (10;6,3^7) &(10;5^3,4,1^8) & (10;5^3,3,2^3,1^3)\\
\vpb
(10;5^3,2^6)&(10;5^2,4^2,2^4,1)&  (10;5^2,4,3^3,2,1^2)&(10;5^2,3^5,2)\\
\vpb
(10;5,4^3,3^2,2^2)&(10;4^6,1^3)&(10;4^5,3^2,1)&\\
\hline
\vpb
(11;8,3^5,2^2,1^3)&(11;8,3^4,2^5)&(11;7,4^3,3^2,1^5)&(11;7,4^3,3,2^3,1^2)\\
\vpb
(11;7,4^2,3^3,2^3)&(11;7,4,3^6,1) & (11;6,5^3,1^9)&(11;6,5^2,4,2^4,1^2)\\
\vpb 
(11;6,5^2,3^3,2,1^3)&(11;6,5^2,3^2,2^4)&(11;6,5,4^2,3^2,2^2,1)&
(11;6,4^5,1^4)\\
\vpb
(11;6,4^3,3^4)& (11;5^4,2^5)& (11;5^3,4,3^3,1^2) &(11;5^2,4^4,2,1^2)\\
\vpb
(11;5^2,4^3,3^2,2)\\
\hline\hline
\end{array}$$
\caption{Proper homaloidal types of degree $d\le 11$ which are not of type 
$(d;d-1,1^{2d-2})$ or $(d;d-2,2^{d-2},1^3)$.
}
\label{tab:Hudson11}
\end{table}
It can also be checked that these types are the same as in \cite[Table I, pages 437--438]{Hudson}.

\end{example}

In Section \ref{Soneless} we will need other proper homaloidal types in each degree.

\begin{example} \label{ex3m}
For each integer $m \geq 3$, the following homaloidal types
\begin{align} \label{3m}
 & (3m;3m-6,6^{m-3},4^3,3^2,2,1),
\\ \label{3m+1}
& (3m+1;3m-5,6^{m-2},4,3^3,1^4),
\\ \label{3m+2}
& (3m+2;3m-4,6^{m-2},4^2,3^2,2^2,1 ),
\end{align}
are proper \cite[Table II]{Hudson}. This can also be shown by applying the Hudson's test for $m=3$ and $m=4$ and then apply induction on $m$: running once step (2) of the algorithm, the properness of the homaloidal types \eqref{3m}, \eqref{3m+1}, \eqref{3m+2} for $m$ proves the properness of these types for $m+2$.
\end{example}

\section{Varieties parametrising birational maps of small degree}
\subsection{The topology on $\Bir(\p^2)$}\label{SubSec:TopologyCremona}
We recall the notion of families of birational maps, introduced by M.~Demazure in \cite{De} (see also \cite{Se}, \cite{Bl}).

\begin{definition} \label{Defi:Family}
Let $A,X$ be irreducible algebraic varieties, and let $f$ be a $A$-birational map of the $A$-variety $A\times X$, inducing an isomorphism $U\to V$, where $U,V$ are open subsets of $A\times X$, whose projections on $A$ are surjective.

The rational map $f$ is given by $(a,x)\dasharrow (a,p_2(f(a,x)))$, where $p_2$ is the second projection, and for each $\k$-point $a\in A$, the birational map $x\dasharrow p_2(f(a,x))$ corresponds to an element  $f_a\in \Bir(X)$.
The map $a\mapsto f_a$ represents a map from $A$ $($more precisely from the $A(k)$-points of $A)$ to $\Bir(X)$, and will be called a \emph{morphism} from $A$ to $\Bir(X)$.
\end{definition}
These notions yield the natural Zariski topology on $\Bir(X)$, introduced implicitly by M.~Demazure \cite{De} and explicitly by J.-P. Serre \cite{Se}:
\begin{definition}  \label{defi: Zariski topology}
A subset $F\subseteq \Bir(X)$ is closed in the Zariski topology
if for any algebraic variety $A$ and any morphism $A\to \Bir(X)$ the preimage of $F$ is closed.
\end{definition}
\begin{remark}\label{Rem:TopInv}
Any birational map $X\dasharrow Y$ yields a homeomorphism between $\Bir(X)$ and $\Bir(Y)$, and for any $\varphi\in \Bir(X)$ the maps $\Bir(X)\to \Bir(X)$ given by $\psi\mapsto \psi \circ \varphi$, $\psi\mapsto \varphi\circ \psi$ and $\psi\mapsto \psi^{-1}$ are homeomorphisms. \end{remark}

\begin{remark}
In the sequel, the topology on $\Bir(\p^2)$ and its subsets will always be the Zariski topology given in Definition~\ref{defi: Zariski topology}.
\end{remark}

\subsection{The varieties $W_d$,  $\Bir_d$ and $\Bir_d^{\circ}$}
Let us recall the following notation, which is taken from \cite[Definition 2.3]{BF} and \cite[p.~1112]{BCM}.
\begin{definition}\label{DefWHG}
Let $d$ be a positive integer.
\begin{enumerate}
\item
We define $W_d$ to be the set of equivalence classes of non-zero  triples $(h_0,h_1,h_2)$
of homogeneous polynomials $h_i\in \k[x,y,z]$ of degree $d$,
where $(h_0,h_1,h_2)$ is equivalent to $(\lambda h_0,\lambda h_1,\lambda h_2)$ for any $\lambda\in \k^{*}$.
The equivalence class of $(h_0,h_1,h_2)$ will be denoted by $[h_0:h_1:h_2]$.
\item
We define $\mathrm{Bir}_d\subseteq W_d$ to be the set of elements $h=[h_0:h_1:h_2]\in W_d$
such that the rational map
$\psi_h\colon \p^2\dasharrow \p^2$ given by $$[x:y:z]\dashmapsto
[h_0(x,y,z):h_1(x,y,z):h_2(x,y,z)]$$ is birational.
We denote by $\pi_d$ the map $\mathrm{Bir}_d\to \Bir(\p^2_\k)$ which sends $h$ onto $\psi_h$.
\item
We define by $\mathrm{Bir}_d^{\circ}\subseteq \mathrm{Bir}_d$ the subset of elements $[h_0:h_1:h_2]\in \mathrm{Bir}_d$ such that the polynomials $h_0,h_1,h_2$ have no common factor of degree $\ge 1$.
\end{enumerate}
\end{definition}
\begin{remark}
Note that $\mathrm{Bir}_d$ is the notation of \cite{BCM} and was called $H_d$ in \cite{BF}.
\end{remark}
\begin{remark}
The map $\pi_d$ is not injective for $d\ge 2$ but restricts to a natural bijection between $\mathrm{Bir}_d^{\circ}$ and the set $\Bir(\p^2)_{d}$ of maps of degree $d$.
\end{remark}

\begin{lemma} \label{lem:WHalgebraic}
Let $W_d,\Bir_d$ be as in Definition~$\ref{DefWHG}$.
Then, the following holds:

\begin{enumerate}[$(1)$]
\item
The set $W_d$ is  isomorphic to $\p^{r}$,
where $r=3 \binom{d+2}{2}-1 =  3d(d+3)/2+2$.
\item
The set $\Bir_d$ is locally closed in $W_d$, and thus inherits from $W_d$ the structure of an algebraic variety. 
\item
The map $\pi_d\colon \Bir_d\to \Bir(\p^2)$ is a morphism, which is continuous and closed. Its image is the set $\Bir(\p^2)_{\le d}$ of birational transformations of degree $\le d$.
\item
For any $\varphi \in \Bir(\p^n)_{\le d}$, the set $(\pi_d)^{-1}(\varphi)$ is closed in $W_d$ $($hence in $\Bir_d)$.
\item
The set $\Bir_d^{\circ}$ is open in $\Bir_d$.\end{enumerate}
\end{lemma}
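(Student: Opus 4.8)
The plan is to verify the five assertions essentially in order, since each relies on the previous ones. For $(1)$, the set $W_d$ is by definition the projectivization of the $\k$-vector space $\k[x,y,z]_d^{\oplus 3}$ of triples of homogeneous polynomials of degree $d$; since $\dim_\k \k[x,y,z]_d = \binom{d+2}{2}$, this is $\p^r$ with $r = 3\binom{d+2}{2}-1$, and the arithmetic identity $3\binom{d+2}{2}-1 = 3d(d+3)/2+2$ is a routine check. For $(5)$, the complement of $\Bir_d^\circ$ in $\Bir_d$ consists of those $[h_0:h_1:h_2]$ whose entries share a common factor of positive degree; this is the intersection of $\Bir_d$ with the image of the resultant-type incidence locus $\bigcup_{1\le e\le d}\{(g,k_0,k_1,k_2) : \deg g = e\}\to W_d$, $(g,k_i)\mapsto [gk_0:gk_1:gk_2]$, which is closed (a finite union of images of projective, hence proper, morphisms), so $\Bir_d^\circ$ is open.

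For $(2)$, I would exhibit $\Bir_d$ as locally closed by writing down equations. A triple $h=[h_0:h_1:h_2]$ lies in $\Bir_d$ iff $\psi_h$ is dominant of degree $1$ onto $\p^2$. Dominance is an open condition (nonvanishing of the appropriate Jacobian-type minor, equivalently that the $h_i$ are algebraically independent), and on the dominant locus the (topological) degree of $\psi_h$ is a constructible function, so the condition ``degree $=1$'' cuts out a locally closed subset; alternatively, one invokes the fact — standard in this circle of ideas and used implicitly in \cite{BF}, \cite{BCM} — that birationality of a triple of forms of fixed degree $d$ is a locally closed condition, e.g. by the existence of an inverse triple of some bounded degree $\le d^2$ whose composition with $h$ is the identity up to a common factor, a condition expressible by polynomial equalities and inequalities in the coefficients. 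This makes $\Bir_d$ an algebraic variety with the reduced induced structure.

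For $(3)$ and $(4)$, the key point is that $\pi_d$ is a morphism to $\Bir(\p^2)$ in the sense of Definition~\ref{Defi:Family}: indeed, the universal triple over $\Bir_d$ (restricting the tautological family of triples on $W_d = \p^r$) defines a $\Bir_d$-birational self-map of $\Bir_d\times\p^2$, whose member over $h\in\Bir_d$ is exactly $\psi_h$, so $\pi_d$ is a morphism and hence continuous for the Zariski topology on $\Bir(\p^2)$ by the very definition of that topology. Its image is $\Bir(\p^2)_{\le d}$: every birational map of degree $\le d$ is represented by a (possibly non-reduced) triple of forms of degree $d$, by multiplying a reduced representative of lower degree by a common form, so surjectivity onto $\Bir(\p^2)_{\le d}$ holds. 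For closedness of $\pi_d$ and for $(4)$, the essential input is that $\Bir_d$ is \emph{proper} in a suitable sense over the relevant parameter spaces: the fibers $\pi_d^{-1}(\varphi)$ for $\varphi$ of degree exactly $e\le d$ are the $[g h_0 : g h_1 : g h_2]$ with $[h_0:h_1:h_2]$ a fixed reduced representative and $g$ ranging over forms of degree $d-e$ up to scalar — this is a linear $\p^{\binom{d-e+2}{2}-1}$, hence closed in $W_d=\p^r$, giving $(4)$. Then $\pi_d$ closed follows by combining $(4)$ with a dimension/semicontinuity argument, or more cleanly: to check a subset $F\subseteq\Bir(\p^2)_{\le d}$ is closed one must check $\psi^{-1}(F)$ closed for every morphism $A\to\Bir(\p^2)$; since $A\to\Bir(\p^2)$ lands in $\Bir(\p^2)_{\le d}$ (as $A$ is irreducible and the degree is bounded on a family), one lifts it, up to a finite cover or by normalizing, to a morphism $A'\to\Bir_d$, and uses that $\pi_d$ identifies the topology on its image with the quotient topology, the fibers being the closed linear subspaces just described.

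The main obstacle I anticipate is $(3)$, specifically verifying that $\pi_d$ is \emph{closed} and that its image carries exactly the subspace topology induced from $\Bir(\p^2)$: one must be careful that the Zariski topology on $\Bir(\p^2)$ of Definition~\ref{defi: Zariski topology} is defined by a compatibility condition over \emph{all} test varieties $A$, so closedness of $\pi_d$ is not merely a statement about $W_d=\p^r$ but requires knowing that any family of birational maps of bounded degree is, after suitable base change, induced by a morphism to $\Bir_d$. I would handle this by appealing to the universal property: a morphism $A\to\Bir(\p^2)$ with image in degree $\le d$ corresponds to an $A$-birational map of $A\times\p^2$ whose member-wise degree is $\le d$, which globalizes (after replacing $A$ by a dense open, then taking closure, using the properness of the linear fibers) to a rational map $A\dasharrow\Bir_d\subseteq\p^r$ defined by the coefficients of the defining forms, and since these coefficients are regular on a dense open and $\p^r$ is proper one extends; the remaining subtlety of whether the extension still lands in $\Bir_d$ rather than merely $W_d$ is exactly where $(2)$ (local closedness) together with $(4)$ (closedness of fibers) is used. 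This is the step where I would spend the most care; the rest is bookkeeping with projective spaces and resultants.
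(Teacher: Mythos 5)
The paper does not reprove these facts at all: its proof is a one-line citation to \cite{BF} (Lemma 2.4, Corollary 2.9 and Proposition 2.15), so you are in effect re-deriving the foundational results of Blanc--Furter. Your treatment of $(1)$, $(4)$ and $(5)$ is fine: the identification $W_d\cong\p^r$, the description of a fibre $(\pi_d)^{-1}(\varphi)$ as the linearly embedded $\p\bigl(\k[x,y,z]_{d-e}\bigr)$ (hence closed in $W_d$), and the closedness of the common-factor locus as a finite union of images of proper multiplication maps are all correct and essentially the standard arguments.

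There are, however, genuine gaps in $(2)$ and above all in $(3)$. For $(2)$, ``the degree is a constructible function, so degree $=1$ is locally closed'' is not a valid inference (constructible sets need not be locally closed), and the alternative formulation via existence of an inverse triple is an existential condition, i.e.\ a projection, which again only gives constructibility without a further argument; local closedness of $\Bir_d$ in $W_d$ needs a more careful proof (this is exactly \cite[Lemma~2.4]{BF}). For $(3)$, the decisive step in your sketch fails: you extend a rational map $A\dasharrow \Bir_d\subseteq\p^r$, defined on a dense open subset, to all of $A$ ``since $\p^r$ is proper''. Properness only gives extension across divisorial (codimension-one) points via the valuative criterion; a rational map from a variety of dimension $\ge 2$ to $\p^r$ need not extend everywhere (e.g.\ $\A^2\dasharrow\p^1$, $(x,y)\mapsto[x:y]$). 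So the claimed lifting of an arbitrary test morphism $A\to\Bir(\p^2)_{\le d}$ to a morphism into $\Bir_d$ (even after a finite cover or normalization) is not justified, and this lifting is precisely the delicate content of the results in \cite{BF} that the paper invokes. Moreover, your appeal to ``$\pi_d$ identifies the topology on its image with the quotient topology'' is circular: that identification is a consequence of continuity, surjectivity and closedness of $\pi_d$, which is what you are trying to prove. As written, the proposal therefore does not establish $(3)$, and $(2)$ is only heuristically argued.
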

\begin{proof}
Follows from \cite[Lemma 2.4, Corollary 2.9 and Proposition 2.15]{BF}.
\end{proof}
\begin{corollary}\label{Coro:ClosureBird0}
Let $d\ge 1$. We have an equality
$$\pi_d(\overline{\Bir_d^{\circ}})=\overline{\Bir(\p^2)_{d}},$$
where the closure of $\Bir_d^{\circ}$ is taken in $\Bir_d$ and the closure of $\Bir(\p^2)_d$ is taken in $\Bir(\p^2)$.
\end{corollary}
\begin{proof}
Follows from the fact that $\pi_d(\Bir_d^{\circ})=\Bir(\p^2)_{d}$ and that $\pi_d\colon \Bir_d\to \Bir(\p^2)$ is closed and continuous.
\end{proof}
\begin{lemma}\label{Lemm:InverseHomeo}
The map $$\begin{array}{rcl}
\Bir(\p^2)&\to& \Bir(\p^2)\\
\varphi&\mapsto & \varphi^{-1}\end{array}$$
is a homeomorphism, which sends $\Bir(\p^2)_d$ onto itself for each $d$.

In particular, an element $\varphi\in \Bir(\p^2)$ belongs to the closure of $\Bir(\p^2)_d$ if and only if $\varphi^{-1}$ belongs to the closure of $\Bir(\p^2)_d$.
\end{lemma}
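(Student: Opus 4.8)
The plan is to deduce everything from the machinery already in place, so I expect this to be short. For the first assertion, that $\varphi\mapsto\varphi^{-1}$ is a homeomorphism, I would simply invoke Remark~\ref{Rem:TopInv}, which already records it. If one wants to argue from scratch, continuity follows directly from Definition~\ref{defi: Zariski topology}: given any morphism $A\to\Bir(\p^2)$ represented by an $A$-birational map $f$ of $A\times\p^2$ inducing an isomorphism $U\to V$, the inverse $f^{-1}$ induces the isomorphism $V\to U$, is again an $A$-birational map in the sense of Definition~\ref{Defi:Family}, and satisfies $(f^{-1})_a=(f_a)^{-1}$ for every $\k$-point $a\in A$; hence $a\mapsto (f_a)^{-1}$ is again a morphism, so inversion is continuous, and being an involution it is a homeomorphism.

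For the second assertion the key point is the classical equality $\deg(\varphi^{-1})=\deg(\varphi)$ for plane Cremona transformations, which forces inversion to restrict to a self-map of each $\Bir(\p^2)_d$. One clean way to obtain it within the present formalism: by Remark~\ref{Rem:MatrixInv}, if $g\in W$ is the element with $g(e_0)=de_0-\sum_i m_ie_i$ the homaloidal type of $\varphi$, then $g^{-1}$ is the element attached to $\varphi^{-1}$, so $\deg(\varphi^{-1})$ is the coefficient of $e_0$ in $g^{-1}(e_0)$. Since $W$ preserves the intersection form and $(e_0)^2=1$, $e_i\cdot e_0=0$ for $i\ge 1$, that coefficient equals $g^{-1}(e_0)\cdot e_0=e_0\cdot g(e_0)=d$. (Alternatively one can argue by the projection formula on a common resolution of $\varphi$, or just cite the classical fact.) Thus inversion, being its own inverse, maps $\Bir(\p^2)_d$ bijectively onto itself.

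Finally, the ``in particular'' is immediate: a homeomorphism carrying $\Bir(\p^2)_d$ onto $\Bir(\p^2)_d$ carries $\overline{\Bir(\p^2)_d}$ onto $\overline{\Bir(\p^2)_d}$, so $\varphi\in\overline{\Bir(\p^2)_d}$ if and only if $\varphi^{-1}\in\overline{\Bir(\p^2)_d}$. I do not expect any genuine obstacle here; the only mildly non-formal ingredient is the degree equality $\deg(\varphi)=\deg(\varphi^{-1})$, and even that is standard and already implicit in Remarks~\ref{Rem:MatrixInv} and~\ref{Rem:MatrixInv2}.
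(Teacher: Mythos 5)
Your proposal is correct and follows essentially the same route as the paper, which likewise just combines Remark~\ref{Rem:TopInv} with the fact that $\deg(\varphi^{-1})=\deg(\varphi)$ and then takes closures. The extra details you supply (continuity via $(f^{-1})_a=(f_a)^{-1}$ in Definition~\ref{Defi:Family}, and the degree equality via the $W$-invariant intersection form) are sound but not needed beyond what the paper already records.
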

\begin{proof}
Follows from the definition of the topology of $\Bir(\p^2)$ (see Remark~\ref{Rem:TopInv}), and from the fact that the degree of an element and its inverse are the same.
\end{proof}

\begin{proposition}\label{prop:BCM}
Let $d\ge 2$. 

\begin{enumerate}[$(1)$]
\item
If $I\subset \Bir_d^{\circ}$ is an irreducible component, there exists a proper homaloidal type $\Lambda=(d;m_1,\dots,m_r)$ such that a general element of $I$ is a  birational map $\varphi$ of $\p^2$ of type $\Lambda$, such that neither $\varphi$ nor $\varphi^{-1}$ have infinitely near base-points. Moreover, all birational maps of type $\Lambda$ are contained in $I$.
\item
The association in $(1)$ yields a one-to-one correspondence between the irreducible components of $\Bir_d^{\circ}$ and the proper homaloidal types of degree $d$.\end{enumerate}
\end{proposition}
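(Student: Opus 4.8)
The plan is to combine the description of $\Bir_d^\circ$ via quadratic factorizations coming from Noether--Castelnuovo (as already used in the proof of Proposition~\ref{Prop:HudsonW}) with a dimension count to control how the base-points vary, and then appeal to the openness of the locus of maps without infinitely near base-points. First I would fix a proper homaloidal type $\Lambda=(d;m_1,\dots,m_r)$ of degree $d$ and consider the constructible subset $S_\Lambda\subset\Bir_d^\circ$ of birational maps of type $\Lambda$; by Proposition~\ref{Prop:HudsonW}$(ii)$ this set is nonempty, and one checks it is irreducible because it is dominated by an irreducible incidence variety fibred over the open subset $U\subset(\p^2)^r$ of Proposition~\ref{Prop:HudsonW}$(ii)$ (the fibre over a point configuration is an open subset of a linear system, hence irreducible of constant dimension). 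Then I would show every $S_\Lambda$ contains in its closure (inside $\Bir_d$) some map which is \emph{not} in $\Bir_d^\circ$, or rather the opposite: I must show the generic member of a given irreducible component $I$ of $\Bir_d^\circ$ lies in exactly one $S_\Lambda$ and that $S_\Lambda$ is \emph{all} of $I\cap\Bir_d^\circ$-generically, with the additional property that neither $\varphi$ nor $\varphi^{-1}$ has infinitely near base-points.

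The key steps, in order, are: (1) For each proper $\Lambda$, establish that $S_\Lambda$ is irreducible and locally closed in $\Bir_d^\circ$, using the incidence-variety argument above together with Remark~\ref{Rem:MatrixInv2}, which guarantees that the homaloidal type of $\varphi^{-1}$ depends only on $\Lambda$, not on the position of base-points — so "type $\Lambda$" is a well-defined locally closed condition on $\Bir_d^\circ$ for the pair $(\varphi,\varphi^{-1})$. (2) Observe that $\Bir_d^\circ$ is the finite disjoint union $\bigsqcup_\Lambda S_\Lambda$ over proper homaloidal types $\Lambda$ of degree $d$ (every birational map of degree $d$ has some proper type, and its type is one of the finitely many solutions of the Noether equalities that pass Hudson's test, Lemma~\ref{lem:proper}). (3) Within a fixed $S_\Lambda$, show that the locus where $\varphi$ has $r$ \emph{proper} (non-infinitely-near) base-points is open and dense: this is the content of Proposition~\ref{Prop:HudsonW}$(ii)$, whose open set $U\subset(\p^2)^r$ of honest point configurations realizes exactly this; and since the same applies to $\varphi^{-1}$ (whose type is the fixed type $\Lambda^{-1}=g^{-1}(e_0)$), intersecting the two dense opens keeps density. (4) Conclude: since $\Bir_d^\circ$ is a finite disjoint union of the irreducible locally closed sets $S_\Lambda$, its irreducible components are exactly the closures $\overline{S_\Lambda}\cap\Bir_d^\circ$; a general element of each is a map of type $\Lambda$ with no infinitely near base-points for it or its inverse, which is assertion $(1)$; and the map $I\mapsto\Lambda$ is the claimed bijection, giving $(2)$. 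The statement that "all birational maps of type $\Lambda$ are contained in $I$" is immediate once $S_\Lambda$ is shown to be irreducible and contained in a single component, since $S_\Lambda$ is precisely the set of all such maps.

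The main obstacle I anticipate is step (1): proving that $S_\Lambda$ is genuinely irreducible and locally closed, rather than merely constructible. The subtlety is that the incidence variety $\{(\varphi,p_1,\dots,p_r)\}$ fibres over configuration space with fibres that are open subsets of a projective space of curves, so it is irreducible, but one must verify that the forgetful map to $\Bir_d^\circ$ has image exactly $S_\Lambda$ and that this image inherits irreducibility — in particular that two maps of the same type $\Lambda$ but realized over different strata of configuration space (some configurations with collinear points, some without) still lie in the closure of the generic stratum. This is where the homogeneity under $\Aut(\p^2)$ acting on source and target, plus the explicit quadratic-transformation construction $\hat\varphi=\varphi\psi$ from the proof of Proposition~\ref{Prop:HudsonW}, does the work: it exhibits $S_\Lambda$ as dominated by an irreducible variety. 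A secondary point requiring care is checking that the "no infinitely near base-points for $\varphi^{-1}$" condition is also generic; this follows because applying the characteristic matrix $g^{-1}$ (Remark~\ref{Rem:MatrixInv}) to a general configuration for $\varphi$ produces a general configuration for $\varphi^{-1}$, but writing this out cleanly needs the compatibility of the $W$-action with the geometry, which is exactly what Proposition~\ref{Prop:HudsonW} provides. I would cite \cite{BCM} for the parts of this that are already in the literature and only sketch the irreducibility argument in detail.
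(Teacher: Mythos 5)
The paper does not prove this statement at all: it imports it wholesale, citing \cite[Theorem 1 and Lemma 36]{BCM}. Your plan is essentially an attempt to reprove those results of Bisi--Calabri--Mella, and it stalls at exactly the two points where their work is needed. First, your step (4) does not follow from steps (1)--(3): writing $\Bir_d^\circ=\bigsqcup_\Lambda S_\Lambda$ as a finite disjoint union of irreducible locally closed sets only tells you that every irreducible component is of the form $\overline{S_\Lambda}\cap\Bir_d^\circ$ for \emph{some} $\Lambda$; it does not tell you that \emph{every} $\overline{S_\Lambda}\cap\Bir_d^\circ$ is a component. For that you must rule out inclusions $\overline{S_\Lambda}\subset\overline{S_{\Lambda'}}$ with $\Lambda\neq\Lambda'$, i.e.\ show that a general map of type $\Lambda$ is not a limit, within degree exactly $d$, of maps of a different type $\Lambda'$. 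Nothing in your sketch addresses this (a dimension count of the $S_\Lambda$, or a semicontinuity argument on the contracted curves as in Lemma~\ref{Lemm:DecompJac}, is what is needed), and without it the one-to-one correspondence in part $(2)$ is not established.

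Second, the assertion ``all birational maps of type $\Lambda$ are contained in $I$'' is not ``immediate once $S_\Lambda$ is irreducible'' --- the irreducibility of $S_\Lambda$ \emph{is} the hard content (this is precisely \cite[Lemma 36]{BCM}). Your proposed mechanism, homogeneity under $\Aut(\p^2)$ together with the construction $\hat\varphi=\varphi\psi$ from the proof of Proposition~\ref{Prop:HudsonW}, only produces maps whose base-points form a \emph{general} configuration of proper points: $\Aut(\p^2)\times\Aut(\p^2)$ is far from transitive on configurations, and the construction in Proposition~\ref{Prop:HudsonW} explicitly restricts to the open locus where no three points are collinear. So it gives no control over maps of type $\Lambda$ with collinear clusters or with infinitely near base-points, which is exactly the locus you must show lies in the closure of the generic stratum. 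You flag this as the ``main obstacle'' but do not supply an argument for it, and it is not a routine verification. A correct write-up should either carry out these two steps (maximality of each $\overline{S_\Lambda}$, and connectedness of the full locus of maps of type $\Lambda$ including degenerate and infinitely near configurations) or simply cite \cite{BCM} for the whole statement, as the paper does.
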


\begin{proof}
It follows from \cite[Theorem 1 and Lemma 36]{BCM}.
\end{proof}
\begin{notation}\label{Notation:BirLambda}
If $\Lambda=(d;m_1,\dots,m_r)$ is a proper homaloidal type, we will follow \cite[Definition 37]{BCM} and denote by $\Bir_\Lambda^{\circ}\subset \Bir_d^{\circ}$ the irreducible component of $\Bir_d^{\circ}$ whose general element is of type $\Lambda$.
\end{notation}
Even if all birational maps of type $\Lambda$ belong to $\Bir_\Lambda^{\circ}$, not all elements of $\Bir_\Lambda^{\circ}$ have homaloidal type $\Lambda$. Indeed, some map can belong to two or more different irreducible components. In particular, $\Bir^{\circ}_d$ is connected if $d\le 6$ \cite[Theorem 2]{BCM}.

\begin{notation}\label{Notation:LambdaDual}
If $\Lambda=(d;m_1,\dots,m_r)$ is a proper homaloidal type, we will denote by $\Lambda^*$ the homaloidal type such that the inverse of a map of type $\Lambda$ has type $\Lambda^*$ (as observed in Remark~\ref{Rem:MatrixInv2}, the homaloidal type of the inverse of $\varphi^{-1}$ only depends on the homaloidal type of $\varphi\in \Bir(\p^2)$).
\end{notation}
\begin{remark}\label{Rem:DualHomType}
The map $\varphi\mapsto \varphi^{-1}$ yields a homeomorphism of $\Bir_d^{\circ}$ (see Lemma~\ref{Lemm:InverseHomeo}). In particular, it sends an irreducible component $\Bir^{\circ}_\Lambda$ onto $\Bir^{\circ}_{\Lambda^*}$.
\end{remark}
\begin{remark}\label{Rem:Degreesmallinverse}
In degree $d\le 5$, every proper homaloidal type $\Lambda$ satisfies $\Lambda=\Lambda^*$, but this is not true in degree $6$, where 
$(6;4,2^4,1^3)^*=(6;3^3,2,1^4)$ (see \cite[Table I, pages 437--443]{Hudson} for the description in each degree $d\le 16$). 

One can moreover observe that for each $d\ge 6$ there are types which are self-dual and types which are not: 
\begin{enumerate}
\item
 $(d;d-1,1^{2d-2})^*=(d;d-1,1^{2d-2})$ for $d\ge 2$;
 \item
 $(d;d-2,2^{d-2},1^3)^{*}=(d;\lfloor\frac{d}{2}\rfloor^3,\lceil \frac{d-1}{2}\rceil,1^{d-2})\not=(d;d-2,2^{d-2},1^3)$ for $d\ge 6$.
 \end{enumerate}
\end{remark}

\subsection{Jacobians and curves contracted}
The curves contracted by birational maps and the Jacobian of the maps will be useful to parametrise subvarieties of $\Bir_d$ and $\Bir_d^{\circ}$, and to obtain results on possible degenerations.
\begin{definition}
If $f=[f_0:f_1:f_2]\in \Bir_d$, we denote by $J(f)$ the polynomial, defined up to multiple by a constant of $\k^*$, which is the determinant of the matrix of partial derivatives of $f_0,f_1,f_2$ with respect to $x,y,z$. It is the \emph{Jacobian} of $f$. This gives a  morphism $J\colon\Bir_d\to \p(\k[x,y,z]_{3(d-1)})$.
\end{definition}

We now introduce a new definition, that we will then use to study degenerations of maps (together with Definition~\ref{Def:BirLambda}).
\begin{definition}
Let $f=[f_0:f_1:f_2]\in \Bir_d$, let $h\in \k[x,y,z]$ be a homogeneous polynomial and let $q=[q_0:q_1:q_2]\in \p^2$. 

$(i)$ We say that $f$ \emph{contracts $h$ onto $q\in \p^2$} if $q_if_j-q_jf_i$ is a multiple of $h$ for each $i,j\in \{0,1,2\}$.

$(ii)$ We say that $q$ \emph{is a base-point of $f$ of multiplicity $k$} if a general linear combination of $f_0,f_1,f_2$ has multiplicity $k$ at $q$.
\end{definition}
\begin{remark}
If $h$ is a factor of $\gcd(f_0,f_1,f_2)$, then $h$ is contracted onto any point of~$\p^2$. But otherwise, there is only one possible point where $h$ can be contracted.

If $f\in \Bir_d$ and $\varphi=\pi_d(f)\in \Bir(\p^2)_{\le d}$ is the corresponding birational map, every base-point of $\varphi$ is a base-point of $f$. But if $\varphi$ has degree $<d$, then $f$ has infinitely many base-points, which correspond to the points of the common factor of $f_0,f_1,f_2$.
\end{remark}

\bigskip

Let us recall the following classical result.
\begin{lemma}\label{Lemm:DecompJac}
Let $f=[f_0:f_1:f_2]\in \Bir_d^{\circ}$  be an element which corresponds to the birational map $\pi_d(f)=\varphi\in \Bir(\p^2)_d$ and denote by $(d;m_1,\dots,m_r)$ the homaloidal type of $\varphi^{-1}$. We also denote by $g=[g_0:g_1:g_2]\in \Bir_d^{\circ}$ the element corresponding to $\varphi^{-1}$, and assume that $\varphi^{-1}$ has no infinitely near base-point.
\begin{enumerate}[$(1)$]
\item
If  $h\in \k[x,y,z]$ is a homogeneous polynomial which is contracted by $f$ onto a point $q\in \p^2$, then each point of the curve of $\p^2$ given by $h=0$ which is not a base-point of $\varphi$ is sent by $\varphi$ onto $q$.
Moreover, $h$ is a divisor of the Jacobian $J(f)$.
\item
The Jacobian $J(f)$ admits a decomposition into $J(f)=\prod_{i=1}^r p_i$, where  $p_1,\dots,p_r$ are homogeneous polynomials of degree $m_1,\dots,m_r$ respectively, each of them contracted by $f$ onto points $q_1,\dots,q_r\in \p^2$ respectively, all being base-points of $\varphi^{-1}$ of multiplicity equal to $m_1,\dots,m_r$ respectively.

Moreover, the following hold:
\begin{enumerate}[$(a)$]
\item
The points $q_i$ are pairwise distinct.
\item
The points $q_1,\dots,q_r$ are the base-points of $\varphi^{-1}$.
\item
Each $p_i$ is an irreducible polynomial.
\item
The decomposition $J(f)=\prod_{i=1}^r p_i$ corresponds to the decomposition of $J(f)$ into irreducible polynomials.
\end{enumerate}
\end{enumerate}
\end{lemma}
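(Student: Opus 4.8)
The plan is to prove the statement by analysing how a contracted curve and the Jacobian behave under the standard factorisation of $\varphi^{-1}$ into quadratic and linear maps, making essential use of the fact that $\varphi^{-1}$ has no infinitely near base-point. First I would dispose of part~$(1)$: if $h$ is contracted by $f$ onto $q$, then by definition $q_if_j-q_jf_i$ vanishes on the curve $h=0$; away from the base-points of $\varphi$ the map $\varphi$ is defined there, and the identity $q_if_j-q_jf_i=0$ says precisely that $[f_0(p):f_1(p):f_2(p)]=[q_0:q_1:q_2]=q$ for every such point $p$ on $h=0$. Thus $\varphi$ sends the curve $h=0$ (minus its base-points) to the single point $q$, so this curve is contracted and $q$ is a base-point of $\varphi^{-1}$. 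The classical fact that a curve contracted by $\varphi$ is a component of the Jacobian divisor $J(f)$ — the ramification divisor of $\varphi$ — then gives $h\mid J(f)$; I would cite this (e.g.\ via \cite{alberich}) rather than reprove it.

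For part~$(2)$ the approach is: $\varphi^{-1}$ has homaloidal type $(d;m_1,\dots,m_r)$ with proper base-points $q_1,\dots,q_r$ of multiplicities $m_1,\dots,m_r$ (distinctness of the $q_i$ is exactly the no-infinitely-near hypothesis, giving $(a)$ and $(b)$ at once). Each $q_i$, being a base-point of $\varphi^{-1}$, is the image under $\varphi$ of a curve contracted by $\varphi$; let $p_i$ be the reduced equation of (the pullback of) that curve. The degree of $p_i$ is the degree of the corresponding irreducible component of the exceptional locus of $\varphi$: using the inverse characteristic matrix (Remark~\ref{Rem:MatrixInv}, Remark~\ref{Rem:MatrixInv2}), which expresses the class of the total transform of the $i$-th exceptional divisor of $\varphi^{-1}$ in terms of $e_0,e_1,\dots$, one reads off that this curve has degree $m_i$ and passes through the base-points of $\varphi$ with prescribed multiplicities; in particular $\sum m_i=3(d-1)=\deg J(f)$ by the Noether equalities~\eqref{Neq}. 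By part~$(1)$ each $p_i$ divides $J(f)$; the $p_i$ are pairwise coprime because they are contracted onto the pairwise distinct points $q_i$ (a single irreducible curve cannot be contracted onto two different points, as noted in the Remark following the definition of ``contracts $h$ onto $q$''), so $\prod p_i \mid J(f)$, and comparing degrees forces $J(f)=\prod_{i=1}^r p_i$ up to a scalar. This simultaneously yields $(d)$ once we know each $p_i$ is irreducible, which is $(c)$.

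The irreducibility claim $(c)$ is the step I expect to be the main obstacle, since a priori the contracted curve mapping to $q_i$ could be reducible. Here I would argue with the characteristic matrix / Picard–Manin formalism: lift $\varphi$ to a minimal resolution $X\to\p^2$, $X\to\p^2$; the irreducible curves contracted by $\varphi$ are exactly the strict transforms of the total-transform classes $E_i^{-1}$ of the exceptional divisors of the resolution of $\varphi^{-1}$, and because $\varphi^{-1}$ has \emph{no infinitely near base-points}, these exceptional divisors are $r$ disjoint $(-1)$-curves, each mapping isomorphically to an irreducible rational curve downstairs; hence each $p_i$ is irreducible of degree $m_i$. Equivalently, one invokes Lemma~\ref{Lemm:DecompJac} is the standard statement that for a Cremona map with proper base-points the Jacobian factors into exactly $r$ irreducible factors of degrees equal to the multiplicities of the inverse (see \cite[Chapter 5]{alberich}), and I would cite that reference for the details, noting that the essential input is the absence of infinitely near base-points of $\varphi^{-1}$ which prevents any of the contracted curves from splitting or acquiring higher multiplicity along the Jacobian. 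Finally $(b)$ — that the $q_i$ are \emph{all} the base-points of $\varphi^{-1}$, not merely some of them — follows by counting: there are $r$ of them and $\varphi^{-1}$ has homaloidal type with exactly $r$ (necessarily proper, by hypothesis) base-points.
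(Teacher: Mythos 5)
Your proposal is essentially correct, but it is worth noting that the paper does not argue this lemma at all: its proof is the single line ``Follows from \cite[Proposition 3.5.3 and Theorem 3.5.6]{alberich}'', i.e.\ a citation of the classical results on principal (contracted) curves and the Jacobian. You instead reconstruct the standard argument: the elementary computation for the first claim of $(1)$ (the vanishing of $q_if_j-q_jf_i$ on $h=0$ forces $\varphi$ to send non-base-points of the curve to $q$), and for $(2)$ the passage to a resolution $X\to\p^2$, $X\to\p^2$, where the absence of infinitely near base-points of $\varphi^{-1}$ makes the target-side exceptional divisors $r$ disjoint irreducible $(-1)$-curves, none contracted on the source side, whose images are irreducible curves of degree $m_i$ (read off from the characteristic matrix); coprimality of the $p_i$ (an irreducible curve cannot be contracted to two distinct points) plus the degree count $\sum m_i=3(d-1)=\deg J(f)$ then forces $J(f)=\prod p_i$ and gives $(a)$--$(d)$. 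This buys an actual explanation of where the hypothesis on $\varphi^{-1}$ enters (it rules out repeated factors in $J(f)$ and curves of ``wrong'' degree), at the cost of still leaning on \cite{alberich} at the one genuinely nontrivial point, namely that a contracted curve divides the Jacobian (and, in $(1)$, that $h$ itself and not merely its reduced part does so) --- exactly the content of the results the paper cites. Two small touch-ups: the exceptional curves map \emph{birationally}, not isomorphically, onto their images (which may be singular at base-points of $\varphi$), though this does not affect irreducibility; and in $(b)$ no counting is needed, since you may take $q_1,\dots,q_r$ to be the base-points of $\varphi^{-1}$ by construction, each being the image of a non-contracted exceptional curve.
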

\begin{proof}
Follows from \cite[Proposition 3.5.3 and Theorem 3.5.6]{alberich}. 
\end{proof}

\begin{definition}\label{Def:BirLambda}
Let $\Lambda$ be a homaloidal type, such that $\Lambda^{*}=(d;m_1,\dots,m_r)$. We denote by $$\Bir_{\Lambda}\subset \Bir_d$$ the set of elements $f=[f_0:f_1:f_2]\in \Bir_d$ such that there exist $g=[g_0:g_1:g_2]\in \Bir_d$ and homogeneous polynomials $p_1,\dots,p_r$ of degree $m_1,\dots,m_r$ respectively, each of them contracted by $f$ onto points $q_1,\dots,q_r$, being base-points of $g$ of multiplicity at least $m_1,\dots,m_r$, and such that  
$J(f)=\prod\limits_{i=1}^r p_i$ and $\pi_d(g)\circ \pi_d(f)$ is the identity.
\end{definition}
The following result shows that this definition is consistent with Notation~\ref{Notation:BirLambda}.
\begin{proposition}\label{Prop:BirLclosed}Let $\Lambda$ be a proper homaloidal type of degree $d\ge 2$. Then, the following hold:
\begin{enumerate}[$(1)$]
\item The set $\Bir_{\Lambda}$ is closed in $\Bir_d$.
\item $\Bir_{\Lambda}^{\circ}$ is the unique irreducible component of $\Bir_d^{\circ}$ contained in $\Bir_{\Lambda}\cap \Bir_d^{\circ}$.\end{enumerate}
\end{proposition}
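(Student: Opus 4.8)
The plan is to realise $\Bir_{\Lambda}$ as the image, under a closed projection, of an incidence variety cut out by closed conditions inside the product of $\Bir_d$ with a complete variety. Write $\Lambda^{*}=(d;m_1,\dots,m_r)$ with all $m_i\ge 1$, set $N:=\p(\k[x,y,z]_{3(d-1)})$, and recall that $\sum m_i=3(d-1)$ by the Noether equalities, so the multiplication map $\mu\colon\prod_{i=1}^r\p(\k[x,y,z]_{m_i})\to N$, $([p_1],\dots,[p_r])\mapsto[p_1\cdots p_r]$, is a morphism. Put $P:=W_d\times\prod_{i=1}^r\p(\k[x,y,z]_{m_i})\times(\p^2)^r$, a complete variety, and inside $\Bir_d\times P$ let $Z$ be the set of tuples $(f,g,[p_1],\dots,[p_r],q_1,\dots,q_r)$ satisfying: \textbf{(C1)} $J(f)=\mu([p_1],\dots,[p_r])$; \textbf{(C2)} for all $i,j,k$ the form $p_i$ divides $q_{i,j}f_k-q_{i,k}f_j$, where $q_i=[q_{i,0}:q_{i,1}:q_{i,2}]$; \textbf{(C3)} for all $i,j$, every partial derivative of $g_j$ of order $<m_i$ vanishes at $q_i$; \textbf{(C4)} $g_j(f_0,f_1,f_2)\,x_k=g_k(f_0,f_1,f_2)\,x_j$ identically, for all $j,k$ (with $x_0,x_1,x_2$ denoting $x,y,z$). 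I would prove that $Z$ is closed in $\Bir_d\times P$ and that $\mathrm{pr}_{\Bir_d}(Z)=\Bir_{\Lambda}$; since $P$ is complete, $\mathrm{pr}_{\Bir_d}$ is a closed map, establishing part~(1).

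Closedness of \textbf{(C1)}--\textbf{(C4)} is a routine verification: \textbf{(C1)} is the preimage of the diagonal of $N\times N$ under the morphism $\Bir_d\times P\to N\times N$ sending a tuple to $(J(f),[p_1\cdots p_r])$, using that $J\colon\Bir_d\to N$ is a morphism; \textbf{(C2)} holds iff the degree-$d$ form $q_{i,j}f_k-q_{i,k}f_j$ lies in the image of the injective linear map $h\mapsto h\cdot p_i$ from $\k[x,y,z]_{d-m_i}$, which is a determinantal (rank) condition whose entries are polynomial in $(f,p_i,q_i)$; and \textbf{(C3)}, \textbf{(C4)} are lists of polynomial equations in the coefficients of $f$, $g$ and the coordinates of the $q_i$. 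The inclusion $\Bir_{\Lambda}\subseteq\mathrm{pr}_{\Bir_d}(Z)$ follows at once from Definition~\ref{Def:BirLambda}: if $f\in\Bir_{\Lambda}$, then $\pi_d(g)\circ\pi_d(f)=\mathrm{id}$ forces $g_j(f_0,f_1,f_2)=h\cdot x_j$ for a common form $h$, whence \textbf{(C4)}, while the rest of the data in the definition is exactly \textbf{(C1)}--\textbf{(C3)}. Conversely, given $(f,g,\dots)\in Z$: since $f\in\Bir_d$, the map $\psi_f$ is birational, hence dominant, and as $g\ne 0$ the $g_j$ do not all vanish at a general point of $\p^2$, so $\psi_g\circ\psi_f$ is a well-defined rational self-map of $\p^2$; evaluating \textbf{(C4)} at a general point shows it equals $\mathrm{id}$. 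Thus $\psi_g=\psi_f^{-1}$ is birational, so $g\in\Bir_d$ and $\pi_d(g)\circ\pi_d(f)=\mathrm{id}$, and together with \textbf{(C1)}--\textbf{(C3)} this is precisely Definition~\ref{Def:BirLambda}; hence $f\in\Bir_{\Lambda}$.

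For part~(2) I would first prove $\Bir_{\Lambda}^{\circ}\subseteq\Bir_{\Lambda}$. A general $f\in\Bir_{\Lambda}^{\circ}$ has homaloidal type $\Lambda$ and, by Proposition~\ref{prop:BCM}, neither $\psi_f$ nor $\psi_f^{-1}$ has infinitely near base-points; then Lemma~\ref{Lemm:DecompJac} gives $J(f)=\prod_{i=1}^r p_i$ with $\deg p_i=m_i$, each $p_i$ contracted by $f$ onto a base-point $q_i$ of $\psi_f^{-1}$ of multiplicity $m_i$, which, together with $\pi_d(g)\circ\pi_d(f)=\mathrm{id}$ for $g$ corresponding to $\psi_f^{-1}$, is Definition~\ref{Def:BirLambda}; so $f\in\Bir_{\Lambda}$. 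As such $f$ are dense in the irreducible set $\Bir_{\Lambda}^{\circ}$ and $\Bir_{\Lambda}$ is closed by part~(1), we get $\Bir_{\Lambda}^{\circ}\subseteq\Bir_{\Lambda}$, and, being by definition (Notation~\ref{Notation:BirLambda}) an irreducible component of $\Bir_d^{\circ}$, it is one contained in $\Bir_{\Lambda}\cap\Bir_d^{\circ}$. For uniqueness, let $I$ be an irreducible component of $\Bir_d^{\circ}$ with $I\subseteq\Bir_{\Lambda}$; by Proposition~\ref{prop:BCM}, $I=\Bir_{\Lambda'}^{\circ}$ for a proper homaloidal type $\Lambda'$ of degree $d$, and a general $f\in I$ has type $\Lambda'$ with $\psi_f^{\pm1}$ having no infinitely near base-points. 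For such $f$, Lemma~\ref{Lemm:DecompJac} expresses $J(f)$ as a product of irreducible forms whose multiset of degrees equals the multiset of multiplicities of $(\Lambda')^{*}$, each form contracted by $f$ onto a point, and these points are pairwise distinct. But $f\in\Bir_{\Lambda}$ also gives $J(f)=\prod_{i=1}^r p_i$ with $\deg p_i=m_i$, each $p_i$ contracted onto a single point $q_i$. By unique factorisation in $\k[x,y,z]$, each $p_i$ is a scalar times a product of some of those irreducible forms; since every point of $\{p_i=0\}$ that is not a base-point of $\psi_f$ is mapped by $\psi_f$ to $q_i$ (Lemma~\ref{Lemm:DecompJac}) while distinct irreducible factors of $J(f)$ are contracted onto distinct points, each $p_i$ must be a scalar multiple of a single irreducible factor. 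Hence $\{m_i\}$ equals the multiset of multiplicities of $(\Lambda')^{*}$, so $\Lambda^{*}=(\Lambda')^{*}$; as $\Lambda\mapsto\Lambda^{*}$ is an involution on proper homaloidal types (Remark~\ref{Rem:MatrixInv2}), $\Lambda=\Lambda'$ and $I=\Bir_{\Lambda}^{\circ}$.

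The hard part is part~(1): checking that all four conditions are genuinely Zariski-closed — especially the divisibility condition \textbf{(C2)}, and the point that the identities \textbf{(C4)} really encode $\pi_d(g)\circ\pi_d(f)=\mathrm{id}$ only once one works over $\Bir_d$, where $\psi_f$ is dominant (over all of $W_d$ they admit spurious solutions for which $\psi_g\circ\psi_f$ is not even defined) — and being careful to keep $f$ in the merely locally closed $\Bir_d$ while $g$, the $p_i$ and the $q_i$ range over the complete variety $P$, so that the projection is closed. Granting part~(1), part~(2) amounts to bookkeeping with the Jacobian factorisation of Lemma~\ref{Lemm:DecompJac} and unique factorisation.
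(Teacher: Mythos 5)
Your proof is correct and follows essentially the same route as the paper: part~(1) is proved there too by introducing an incidence variety $X_\Lambda\subset \Bir_d\times W_d\times\prod_i\p(\k[x,y,z]_{m_i})\times(\p^2)^r$ cut out by the same closed conditions (Jacobian factorisation, contraction, multiplicity, and the identity $g_i(f_0,f_1,f_2)x_j=g_j(f_0,f_1,f_2)x_i$) and projecting onto $\Bir_d$, and part~(2) is likewise deduced from Lemma~\ref{Lemm:DecompJac} and Proposition~\ref{prop:BCM}. Your write-up merely spells out some details the paper leaves implicit (the rank formulation of the divisibility condition, why the composition identity forces $\psi_g=\psi_f^{-1}$ over $\Bir_d$, and the unique-factorisation bookkeeping in the uniqueness step).
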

\begin{proof}
$(1)$ We write $\Lambda^{*}=(d;m_1,\dots,m_r)$ and prove that $\Bir_{\Lambda}$ is closed in $\Bir_d$. To do this, we denote by $X_\Lambda$ the subset of $$\Bir_d \times W_d\times \p(\k[x,y,z]_{m_1})\times \dots\times \p(\k[x,y,z]_{m_r})\times (\p^2)^r$$ consisting of elements $$(f,g,p_1,\dots,p_r,q_1,\dots,q_r)$$ such that 
\begin{itemize}
\item
The equality $J(f)=\prod\limits_{i=1}^r p_i$ holds in $\p(\k[x,y,z]_{3d-3})$.
 \item
The element $[g_0(f_0,f_1,f_2): g_1(f_0,f_1,f_2): g_2(f_0,f_1,f_2)]\in W_{d^2}$ is equal to $[xh:yh:zh]$ for some element $h\in \k[x,y,z]_{d^2-1}$; this corresponds to ask that $g_i(f_0,f_1,f_2)x_j=g_j(f_0,f_1,f_2)x_i$ for each $i,j$, where $x_0=x$, $x_1=y$, $x_2=z$.
 \item For each $j$, the polynomial $p_j$ is contracted by $f$ onto $q_j$.
 \item For each $j$, the point $q_j$ is a base-point of $h$ of multiplicity $\ge m_j$.
 \end{itemize} 
 Since all above conditions are closed, the set $X_\Lambda$ is closed in $\Bir_d \times W_d\times \p(\k[x,y,z]_{m_1})\times \dots\times \p(\k[x,y,z]_{m_r})\times (\p^2)^r$. Moreover, the second condition is equivalent to ask that $g\in \Bir_d$ and that $\pi_d(f)\circ \pi_d(g)$ is the identity. Hence, the set  $\Bir_\Lambda$ is the projection of $X_\Lambda$ onto $\Bir_d$ and is thus closed in $\Bir_d$, because $W_d\times \p(\k[x,y,z]_{m_1})\times \dots\times \p(\k[x,y,z]_{m_r})\times (\p^2)^r$ is projective.

 If $f=[f_0:f_1:f_2]\in \Bir_d^{\circ}$ is of homaloidal type $\Lambda$ such that $(\pi_d(f))^{-1}$ has no infinitely near base-points, then $f$ belongs to $\Bir_{\Lambda}$ and not to any other $\Bir_{\Lambda'}$ (Lemma~\ref{Lemm:DecompJac}). This shows, together with Proposition~\ref{prop:BCM}, that $\Bir_{\Lambda}^{\circ}\subset \Bir_{\Lambda}\cap \Bir_d^{\circ}$ and that $\Bir_\Lambda\cap \Bir_d^{\circ}$ does not contain any other irreducible component. 
\end{proof}
\begin{remark}
%
Each element of $\overline{\Bir_d^{\circ}}\subset \Bir_d$ is contained in a $\Bir_\Lambda$, for some homaloidal type $\Lambda$ of degree $d$. This will give some conditions on the elements of $\overline{\Bir_d^{\circ}}$, and thus on the set $\overline{\Bir(\p^2)_d}=\pi_d(\overline{\Bir_d^{\circ}})$ (Corollary~\ref{Coro:ClosureBird0}). 

The set $\overline{\Bir_\Lambda^{\circ}}$ is contained in  $\Bir_\Lambda$, but we do not know if equality holds. We also do not know if $\Bir_\Lambda^{\circ}=\Bir_\Lambda\cap \Bir_d^\circ$.
\end{remark}

\begin{example}
Let us consider $$\begin{array}{rcll}
f&=&[(x+y-z)y(3y-z): (2y-z)x(3y-z): (2y-z)xy]&\in \Bir_3^{\circ}\\
g&=&[(y-2z)yz: (xy-xz-yz)z: (xy-xz-yz)(y-3z)]&\in \Bir_3^{\circ}\end{array}$$
which are such that $\pi_3(f)\circ \pi_3(g)$ is the identity and $J(f)=3xy(3y-z)(z-y)(2y-z)^2.$ The polynomials $x$, $y$, $3y-z$, $z-y$ and $2y-z$ are contracted respectively onto $[1:0:0]$, $[0:1:0]$, $[0:0:1]$, $[2:2:1]$, $[1:0:0]$.

There are multiple ways to choose the polynomials $p_1,\dots,p_5$ and the points $q_1,\dots,q_5$ and in each way there is a polynomial $p_i$ of degree $1$ contracted onto $q_i=[1:0:0]$, which is a base-point of $g$ of multiplicity $2$. The fact that the multiplicity is higher than the degree of the polynomial is because this polynomial corresponds in fact to a base-point of $g$ infinitely near to $[1:0:0]$, having multiplicity $1$.
\end{example}

\section{Existence of degenerations}
\subsection{Degeneration associated to two base-points}
We first prove the following simple degeneration lemma.

\begin{lemma}\label{Lemm:QuadraticDeg}
Let $p_1\in \p^2$ and let $p_2$ be a point which is either in the first neighbourhood of $p_1$ or a distinct point of $\p^2$.

Then, there exists a morphism $\nu\colon\A^1\to \Bir(\p^2)$ and a morphism $p_3\colon \A^1\to \p^2$ such that the following hold:

\begin{enumerate}[$(1)$]
\item
For $t\not=0$, $\nu(t)$ is a quadratic map with base-points $p_1,p_2,p_3(t)$;
\item
The map $\nu(0)$ is the identity and $p_3(0)$ is collinear with $p_1$ and $p_2$.
\end{enumerate}
\end{lemma}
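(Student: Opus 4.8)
The plan is to reduce to a normal form by the action of $\PGL(3,\k)$ and then to write down an explicit one-parameter family of quadratic involutions which, after composing with a fixed linear automorphism, degenerates to the identity.

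\emph{Reduction.} Composing a morphism $\A^1\to\Bir(\p^2)$ with fixed elements of $\PGL(3,\k)$ at the source and at the target again yields a morphism $\A^1\to\Bir(\p^2)$ (Remark~\ref{Rem:TopInv}), and $\PGL(3,\k)$ acts transitively on $\p^2$ and on the flags formed by a point and a line through it. Hence it is enough to treat two normalised cases: (a) $p_1=[1:0:0]$, $p_2=[0:1:0]$; (b) $p_1=[1:0:0]$ and $p_2$ the point in the first neighbourhood of $p_1$ corresponding to the line $\ell=\{z=0\}$. In both cases the assertion ``$p_3(0)$ collinear with $p_1,p_2$'' is read as ``$p_3(0)\in\ell$'', where in case (a) $\ell=\{z=0\}$ is the line through $p_1$ and $p_2$.

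\emph{Case (a).} I would take $p_3(t)=[1:1:t]$ and
$$\mu(t)\colon[x:y:z]\dashmapsto[x(ty-z):y(tx-z):(tx-z)(ty-z)].$$
For $t\ne0$ this is, up to the common scalar factor $t$, the conjugate $\tau_t\,\sigma\,\tau_t^{-1}$ of the standard quadratic involution $\sigma=[yz:xz:xy]$ by $\tau_t\colon[x:y:z]\mapsto[x+z:y+z:tz]$; since $\tau_t$ fixes $[1:0:0]$ and $[0:1:0]$ and sends $[0:0:1]$ to $[1:1:t]$, it follows that $\mu(t)$ is the quadratic transformation with base-points $p_1,p_2,p_3(t)$. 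A direct substitution shows $\mu(t)\circ\mu(t)=\mathrm{id}$ for every $t$ (one gets $\mu(t)(\mu(t)(x,y,z))=z(tx-z)(ty-z)\cdot(x,y,z)$ identically in $t$), so the induced self-map of $\A^1\times\p^2$ is $\A^1$-birational and $\mu$ is a morphism $\A^1\to\Bir(\p^2)$. Now $\mu(0)=[-xz:-yz:z^2]=[x:y:-z]$ is a linear automorphism; I would set $\nu(t):=[x:y:-z]\circ\mu(t)$, so that by Remark~\ref{Rem:TopInv} $\nu$ is again a morphism, $\nu(0)=\mathrm{id}$, for $t\ne0$ the map $\nu(t)$ is still quadratic with base-points $p_1,p_2,p_3(t)$ (post-composition with a linear automorphism changes neither the base-points nor the degree), and $p_3(0)=[1:1:0]\in\ell$.

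\emph{Case (b).} The same scheme applies starting from the standard quadratic involution $\theta=[xz:yz:y^2]$, whose base-points one checks to be $[1:0:0]$, the point of its first neighbourhood corresponding to $\ell=\{z=0\}$, and $[0:0:1]$; I would conjugate $\theta$ by $\rho_t\colon[x:y:z]\mapsto[x:y+z:tz]$, which fixes $[1:0:0]$ and the line $\ell$ pointwise and sends $[0:0:1]$ to $[0:1:t]$. Up to the common factor $t$ this gives the involution family $\mu(t)=[xz:y(ty-z):(ty-z)^2]$ (again $\mu(t)\circ\mu(t)=\mathrm{id}$ by substitution), which is for $t\ne0$ the quadratic transformation with base-points $p_1,p_2,p_3(t)$, $p_3(t)=[0:1:t]$; here I would also record that the infinitely near base-point persists, i.e.\ that for $t\ne0$ the ideal of the linear system at $[1:0:0]$ is $(z,y^2)$. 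Since $\mu(0)=[xz:-yz:z^2]=[x:-y:z]$ is linear, setting $\nu(t):=[x:-y:z]\circ\mu(t)=[xz:-y(ty-z):(ty-z)^2]$ gives, exactly as in case (a), a morphism $\nu\colon\A^1\to\Bir(\p^2)$ with $\nu(0)=\mathrm{id}$, with $\nu(t)$ quadratic with base-points $p_1,p_2,p_3(t)$ for $t\ne0$, and with $p_3(0)=[0:1:0]\in\ell$.

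\emph{Main difficulty.} The formulas are short; what needs care is the assertion that these families are morphisms $\A^1\to\Bir(\p^2)$ in the precise sense of Definition~\ref{Defi:Family}, i.e.\ that the self-map of $\A^1\times\p^2$ is $\A^1$-birational and that the open loci on which it is an isomorphism surject onto $\A^1$. The clean way to obtain this is the involution trick above: checking $\mu(t)\circ\mu(t)=\mathrm{id}$ as an identity of triples of polynomials in $t,x,y,z$ exhibits the self-map as its own $\A^1$-birational inverse, and the locus where it restricts to an isomorphism is open and nonempty over every closed point of $\A^1$, hence dominates $\A^1$. Everything else -- the values $\mu(0)$, the base-point configurations for $t\ne0$, and the collinearity of $p_3(0)$ with $p_1,p_2$ -- reduces to direct substitution, using the elementary facts that a linear conjugate of a quadratic transformation is a quadratic transformation whose base-points are the images of the original ones, and that post-composing with a linear automorphism leaves base-points and degree unchanged.
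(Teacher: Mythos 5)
Your proof is correct. In the case of two proper points it is essentially the paper's own argument: the family $[x(ty-z):y(tx-z):(tx-z)(ty-z)]$ is exactly the one used there, the only difference being that the paper pre-composes with the linear limit $\kappa(0)$ while you post-compose, which merely changes $p_3(t)$ from $[1:1:-t]$ to $[1:1:t]$. In the infinitely near case your construction genuinely differs: the paper uses $[-xz+ty^2:yz:z^2]$, whose third base-point remains infinitely near $p_2$ for every $t\ne 0$, whereas you conjugate the involution $[xz:yz:y^2]$ by $[x:y+z:tz]$ and get the third base-point $p_3(t)=[0:1:t]$, a proper point of $\p^2$ that degenerates onto the line $z=0$. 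Both choices prove the lemma, but your variant has the small advantage of literally producing a morphism $p_3\colon\A^1\to\p^2$ in this case too (the paper's second family only satisfies the statement if ``base-points $p_1,p_2,p_3(t)$'' is read loosely, allowing $p_3(t)$ infinitely near), while the paper's family is slightly shorter to write down. Your verification of the morphism property via the identity $\mu(t)\circ\mu(t)=\mathrm{id}$ of polynomial triples, giving the $\A^1$-birational inverse explicitly, is correct and makes explicit a point the paper leaves implicit; the conjugation descriptions $\tau_t\sigma\tau_t^{-1}$ and $\rho_t\theta\rho_t^{-1}$ and the resulting base-point configurations all check out.
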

\begin{remark}
In this degeneration, the linear system of conics through $p_1,p_2,p_3(t)$ degenerates to a system of conics through three collinear points, which is the union of the line through the points and the system of lines of $\p^2$.
\end{remark}
\begin{proof}
We first assume that $p_1,p_2$ are proper points of $\p^2$ and can then assume that $p_1=[1:0:0]$ and $p_2=[0:1:0]$. Then, we consider the morphism $\kappa\colon \A^1\to \Bir(\p^2)$ given by
$$\kappa(t)\colon [x:y:z]\dashmapsto [(ty-z)x: (tx-z)y: (tx-z)(ty-z)].$$
For $t\not=0$, $\kappa(t)$ is a quadratic birational involution with base-points $p_1,p_2, [1:1:t]$. Moreover, $\kappa(0)$ equal to the linear automorphism $[x:y:z]\mapsto [x:y:-z]$. We can then define $\nu$ as $\nu(t)=\kappa(t)\circ\kappa(0)$.

The second case is when $p_2$ is infinitely near to $p_1$. We can then assume that $p_1=[1:0:0]$ and that $p_2$ corresponds to the tangent direction $z=0$. In this case, we choose $\kappa\colon \A^1\to \Bir(\p^2)$ given by
$$\kappa(t)\colon [x:y:z]\dashmapsto [-xz+ty^2:yz: z^2].$$
For $t\not=0$, $\kappa(t)$ is a quadratic birational involution with base-points $p_1,p_2$ and some point $p_3(t)$ infinitely near $p_2$. Moreover, $\kappa(0)$ equal to the linear automorphism $[x:y:z]\mapsto [-x:y:z]$. Again, choosing  $\nu(t)=\kappa(t)\circ\kappa(0)$ works.
\end{proof}
\begin{proposition} \label{prop:mi+mj=d-k}
Suppose that $\gamma\in \Bir(\p^2)$ is a birational map of degree $d$ and has two base-points $p_1,p_2$ of multiplicity $m_1,m_2$ with $m_1+m_2=d-k$, such that $p_1$ is a proper point of $\p^2$ and $p_2$ is either a proper point of $\p^2$ or in the first neighbourhood of $p_1$.

Then, there exists a morphism $\rho\colon\A^1\to \Bir(\p^2)$ such that $\rho(0)=\gamma$ and $\rho(t)$ has degree $d+k$ for a general $t\not=0$.
\end{proposition}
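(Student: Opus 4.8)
The plan is to compose $\gamma$ with the degeneration family $\nu$ coming from Lemma~\ref{Lemm:QuadraticDeg}, applied to the two special base-points $p_1,p_2$. Concretely, I would set $\psi(t) = \nu(t)^{-1}$ where $\nu\colon\A^1\to\Bir(\p^2)$ is the morphism provided by Lemma~\ref{Lemm:QuadraticDeg} for the pair $p_1,p_2$, and then define $\rho(t) = \gamma\circ\psi(t)$. Since post-composition with a fixed birational map and taking inverses are continuous operations on $\Bir(\p^2)$ (Remark~\ref{Rem:TopInv}), and since $t\mapsto\nu(t)$ is a morphism, the map $\rho\colon\A^1\to\Bir(\p^2)$ is again a morphism. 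Because $\nu(0)$ is the identity, we get $\rho(0)=\gamma\circ\mathrm{id}^{-1}=\gamma$, which is the first required property.

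The substance is to compute $\deg\rho(t)$ for general $t\neq 0$. For such $t$, $\nu(t)$ is a quadratic map with base-points $p_1,p_2,p_3(t)$; its inverse $\psi(t)=\nu(t)^{-1}$ is then also quadratic, with base-points the images of the three contracted lines — in particular two of them, say $p_1',p_2'$, are the images under $\nu(t)$ of the two lines joining pairs among $\{p_1,p_2,p_3(t)\}$ through $p_1$ and through $p_2$; the key point is that $\nu(t)^{-1}$ has $p_1$ and $p_2$ as base-points too, being a quadratic involution up to a linear factor (the $\kappa(t)$ in the lemma are honest involutions). So I would arrange that $\psi(t)$ itself has $p_1,p_2$ among its base-points, each of multiplicity $1$. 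Then $\rho(t)=\gamma\circ\psi(t)$: the linear system of $\rho(t)$ is the pullback by $\psi(t)$ of the linear system of $\gamma$, which has degree $d$ and multiplicities $m_1,m_2$ at $p_1,p_2$. Pulling back a degree-$d$ system with a quadratic map based at two of the points of $\gamma$'s linear system multiplies the degree by $2$ and subtracts the multiplicities at the two shared base-points: $\deg\rho(t) = 2d - m_1 - m_2 = 2d - (d-k) = d+k$. One must check there is no unexpected common factor, i.e. that this computation gives the true degree and not an overestimate; this holds for general $t$ because the third base-point $p_3(t)$ of $\psi(t)$ moves and is not a base-point of $\gamma$, so no cancellation of degree occurs along the corresponding contracted curve. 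For $t=0$, by contrast, $p_3(0)$ becomes collinear with $p_1,p_2$, the conic system degenerates (as in the Remark after Lemma~\ref{Lemm:QuadraticDeg}), and the composite collapses back to $\gamma$ of degree $d$ — consistent with lower semicontinuity of degree.

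The main obstacle I anticipate is the careful bookkeeping of base-points and multiplicities when composing $\gamma$ with $\psi(t)$, to be sure the degree of the composite is exactly $d+k$ and not smaller: one needs that the two base-points $p_1,p_2$ of $\gamma$ are "absorbed" cleanly by $\psi(t)^{-1}=\nu(t)$ and that no further accidental base-points of higher multiplicity are created for general $t$. This is where genericity of $t$ (and hence of $p_3(t)$) is used. A secondary point is to handle uniformly the two cases of the lemma — $p_2$ a proper point versus $p_2$ in the first neighbourhood of $p_1$ — but since Lemma~\ref{Lemm:QuadraticDeg} already packages both cases into a single morphism $\nu$, this should require no extra case analysis here; one only has to note that in the infinitely-near case "multiplicity $1$ at $p_1$ and $p_2$" is interpreted along the tower of blow-ups, and the degree formula $2d-m_1-m_2$ is unchanged. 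I would conclude by remarking that, combined with Corollary~\ref{Coro:ClosureBird0}, this shows $\gamma\in\overline{\Bir(\p^2)_{d+k}}$.
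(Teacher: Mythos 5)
Your proposal is correct and follows exactly the paper's argument: compose $\gamma$ with $\nu(t)^{-1}$ from Lemma~\ref{Lemm:QuadraticDeg}, observe $\rho(0)=\gamma$ since $\nu(0)=\mathrm{id}$, and compute $\deg\rho(t)=2d-m_1-m_2=d+k$ for general $t$ (the paper leaves the genericity/no-cancellation check implicit, which you spell out via the moving third base-point $p_3(t)$). One small remark: what enters the degree formula are the multiplicities of $\gamma$ at the base-points of $\nu(t)$ itself, namely $p_1,p_2,p_3(t)$, rather than the base-points of $\psi(t)=\nu(t)^{-1}$, but this does not affect your conclusion.
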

\begin{proof}
We use the morphism $\nu\colon \A^1\to \Bir(\p^2)$ given by Lemma~\ref{Lemm:QuadraticDeg} and define $\rho$ as $\rho(t)=\gamma\circ \nu(t)^{-1}$. By construction, this is a morphism which satisfies $\rho(0)=\gamma$. Moreover, the degree of the map $\rho(t)$ for a general $t$ is equal to $2d-m_1-m_2=d+k$.
\end{proof}
\begin{remark}
If $p_1,\dots,p_r$ are the base-points of $\gamma$ of multiplicity $m_1,\dots,m_r$, the degeneration provided by Proposition~\ref{prop:mi+mj=d-k} gives a family of birational maps of degree $d+k$ with base-points of multiplicity $m_1+k,m_2+k,k,m_3,\dots,m_r$. The point of multiplicity $k$ created degenerates to a point collinear with the first two points, and the linear system becomes the union of the linear system of $\gamma$ with $k$ times the line through $p_1$ and $p_2$.
\end{remark}
In order to be able to apply Proposition~\ref{prop:mi+mj=d-k}, we need to compute the multiplicities of birational maps and estimate the integer $k$ which appears in the statement. This is done in the following lemma.
\begin{lemma}\label{Lem:MultiplicitiesDminus1}
Let $\varphi$ be a birational map of degree $d$. Then there exists two distinct points $($proper or infinitely near$)$ of multiplicity $m_1,m_2\ge 0$, such that $$\begin{array}{cll}
m_1+m_2=d-1&\mbox{ if }d\in \{ 1,2,\ldots,6,7,9,11 \}\\
d-2\le m_1+m_2\le d-1&\mbox{ if }d=8\\
d-3\le m_1+m_2\le d-1&\mbox{ if }d=10\\
\frac{2d}{3}<m_1+m_2<d& \mbox{ if }d\ge 12\end{array}$$
\end{lemma}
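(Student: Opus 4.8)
The strategy is to reduce, via the Noether equalities and Hudson's test, to a finite combinatorial problem about homaloidal types, together with a few explicit infinite families for large $d$. First I would use the fact that $\varphi$ has the same base-point data (up to position) as is encoded by its homaloidal type $\Lambda=(d;m_1,\dots,m_r)$, and that one may reorder so that $m_1\ge m_2\ge\dots\ge m_r\ge 0$; what has to be produced is a pair of distinct base-points (proper or infinitely near) whose multiplicities sum to the prescribed amount. So it suffices to show that every \emph{proper} homaloidal type of degree $d$ contains two entries $m_i,m_j$ (with $i\ne j$) satisfying the stated inequality. For $d\le 6$ this is immediate from the classification in Example~\ref{Exa:Types} (and the de Jonqui\`eres and $(d;d-2,2^{d-2},1^3)$ families): in each case inspection gives $m_i+m_j=d-1$, e.g.\ $m_1+m_2$ for de Jonqui\`eres is $(d-1)+1=d$ — wait, here one must instead take $m_1+(\text{a }1)$ only when $d-1<d-1$ fails, so more carefully one takes the two largest among those not forced to overshoot; in practice for each tabulated type one exhibits the pair by hand.

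For the sporadic degrees $d=7,8,9,10,11$ the claim is again just a finite check against Table~\ref{tab:Hudson11} (plus the two standard families): for $d=7,9,11$ every proper type has two entries summing to exactly $d-1$; for $d=8$ the type $(8;3^7)$ forces $m_i+m_j=6=d-2$ while all others reach $d-1$, giving the stated window $d-2\le m_1+m_2\le d-1$; for $d=10$ the type $(10;4^6,1^3)$ (and $(10;3^7,\dots)$-type entries inside $(10;6,3^7)$, etc.) forces the minimum $d-3$, again with $d-1$ always attainable. These are the cases where I would actually list the relevant pairs, since the bound is sharp and must be matched exactly.

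The genuinely structural part is $d\ge 12$, where no classification is available. Here the plan is: (a) produce the \emph{upper} bound $m_1+m_2<d$ for any two entries, which follows from properness — if $m_i+m_j\ge d$ for two base-points then the line through them would split off the linear system, contradicting $\Lambda$ proper (this is exactly the kind of argument used after the definition of improper types for $(5;3^2,1^6)$); (b) produce the \emph{lower} bound $m_1+m_2>\tfrac{2d}{3}$ by a counting argument from the Noether equalities $\sum m_i=3(d-1)$, $\sum m_i^2=d^2-1$: if the two largest multiplicities were each $\le \tfrac{d}{3}$ then all $m_i\le\tfrac d3$, so $\sum m_i^2\le \tfrac d3\sum m_i=\tfrac d3\cdot 3(d-1)=d(d-1)<d^2-1$ for $d\ge 2$, a contradiction — hence $m_1>\tfrac d3$, and then a slightly finer version of the same inequality (bounding $\sum_{i\ge 2}m_i^2$ using $m_i\le m_2$ for $i\ge 2$ and $\sum_{i\ge2}m_i=3(d-1)-m_1$) forces $m_2$ to be large enough that $m_1+m_2>\tfrac{2d}{3}$.

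**Main obstacle.** The clean part is the Noether-equalities estimate for $d\ge 12$; the tedious and error-prone part — and the part I expect to be the real work — is the finite verification for $7\le d\le 11$, because the stated windows ($d-2$ for $d=8$, $d-3$ for $d=10$) are sharp and one must check, type by type in Table~\ref{tab:Hudson11}, both that $d-1$ is always achievable \emph{and} that the extreme types achieving only $d-2$ or $d-3$ really have no better pair. I would organize this as a short case table rather than prose. A secondary subtlety is making sure the two multiplicities correspond to \emph{distinct} points including the infinitely-near case: this is automatic since the $m_i$ in a homaloidal type index distinct (possibly infinitely near) base-points by definition, so choosing two different indices $i\ne j$ suffices.
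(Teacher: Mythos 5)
There are two genuine gaps. The first, and most serious, is in your treatment of $d\ge 12$: the claimed upper bound in step (a) is false. Properness only excludes $m_i+m_j\ge d+1$ (by B\'ezout, or because the line would then be a fixed component); $m_i+m_j=d$ is perfectly compatible with properness, as the de Jonqui\`eres type $(d;d-1,1^{2d-2})$ and the type $(d;d-2,2^{d-2},1^3)$ show. Consequently your Noether-type estimate only produces a pair with $m_1+m_2\in(\tfrac{2d}{3},d]$, and the boundary case $m_1+m_2=d$ is precisely where the paper's proof has to work: it invokes the finer Noether inequality $m_3>(d-m_1)/2$, treats the subcase $m_1=m_2=\dots=m_{\gamma+1}=d/2$ by composing with a quadratic map centred at three of these points to force a further base-point of intermediate multiplicity, and treats the subcase $m_1>d/2$ by a counting argument (multiplying the first Noether equality by $m_2$ and subtracting the second) showing that a non--de Jonqui\`eres map must have an extra base-point to pair with. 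None of this is in your plan, so the case $m_1+m_2=d$ is simply unhandled.

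The second gap is your reduction to ``two entries $m_i,m_j$ of the homaloidal type'': the lemma allows $m_2=0$, i.e.\ a point that is not a base-point, and this is genuinely needed. For a de Jonqui\`eres map the base-point multiplicities pair up only to $d$ or to $2$, so no two base-points give $m_1+m_2=d-1$ (nor any value in $(\tfrac{2d}{3},d)$ for $d\ge 12$); the paper's proof takes $m_1=d-1$, $m_2=0$ there. Your closing remark that distinctness ``is automatic since the $m_i$ index distinct base-points'' therefore misses these types (and the degrees $d=1,2$ as well). Finally, a minor point: your sample data for the finite check are off --- for $d=8$ the type $(8;4^3,2^3,1^3)$ also fails to reach $d-1$ (its best admissible pair is $4+2=6$), and for $d=10$ the extremal type is $(10;5^3,2^6)$ (best admissible pair $5+2=7=d-3$), not $(10;4^6,1^3)$, whose pair $4+4$ already gives $d-2$. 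These slips do not threaten the statement, but they show the tabulated verification, which you yourself identify as the delicate part for $7\le d\le 11$, still has to be carried out carefully.
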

\begin{proof}
If $\varphi$ is of de Jonqui\`eres type, we can choose $m_1=d-1$ and $m_2=0$. If $\varphi$ has an homaloidal type $(d;d-2,2^{d-2},1^3)$, we can choose $m_1=d-2$ and $m_2=1$.

If $d\in \{ 1,2,\ldots,6,7,9,11 \}$, we find two base-points of multiplicity $m_1,m_2$ with $m_1+m_2=d-1$ (see Example~\ref{Exa:Types}). For $d=8,10$, the result also follows from  Example~\ref{Exa:Types}.
 
 We can thus assume $d\ge 12$, and find, with Noether inequalities, two points of multiplicity $m_1,m_2$ with $m_1+m_2>\frac{2d}{3}$.
If $m_1+m_2<d$, we are done, so we can assume that $m_1+m_2=d$ ($m_1+m_2>d$ is not possible by the B\'ezout theorem), i.e.\ $m_2=d-m_1$.
Either $m_1>d/2$, or $m_1=m_2=d/2$.
Moreover, Noether inequalities implies also that $m_3>(d-m_1)/2=m_2/2$ \ \cite[Lemma 8.2.6]{alberich}.
Hence, $m_1+m_3>(d+m_1)/2\geq 3d/4>2d/3$, that is the assertion, unless $m_3=m_2=d-m_1$ too.

Let $\gamma$ be the number of points, different from $p_1$, with multiplicity $d-m_1$.
Either $m_1>d/2$, or $m_1=m_2=\cdots=m_{\gamma+1}=d/2$.

In the latter case, applying a quadratic map centered at $p_1,p_2,p_3$, one finds a Cremona map of degree $d/2$ that must have a base-point $q_4$ of multiplicity $m_4$ with $d/2>m_4> (d/2)/3=d/6$. This means that also $\varphi$ has a point $p_4$ of multiplicity $m_4$. It follows that $d>m_1+m_4> 2d/3$ and the proof is concluded in this case. 

In the former case, we claim that, if $\varphi$ is not de Jonqui\`eres (in which case the assertion of the lemma is trivial, as we already observed), then $\varphi$ has at least one further base-point $p_{\gamma+2}$ (cf.\ \cite[p.\ 75]{Hudson}).
Suppose indeed that the number $r$ of base-points is $\gamma+1$. 
Multiplying the first Noether equality in \eqref{Neq} by $m_2$ and subtracting the second Noether equality in \eqref{Neq}, we find
\[
\sum_{i=1}^r m_i(m_2-m_i)=3m_2(d-1)-(d^2-1)=(d-1)(2d-3m_1-1)
\]
that is
\[
m_1(d-2m_1)=(d-1)(2d-3m_1-1)
\]
which is impossible because $m_1<d-1$.
So our claim is proved and there is at least another base-point $p_{\gamma+2}$ that we can use together with $p_1$.
Note also that the assertion is trivial if $m_1+1 > 2d/3$, i.e.\ $m_1>(2d-3)/3$.
Hence, we may assume that $m_1\leq(2d-3)/3$ and therefore $m_2=d-m_1\geq (d+3)/3$.
Recalling that $m_2=m_3=d-m_1<d/2$, it follows that $d>m_2+m_3>2d/3$, as wanted.
\end{proof}


\begin{corollary}\label{Coro:ddp}
We have $\Bir(\p^2)_d\subset \overline{\Bir(\p^2)_{d+1}}$ for each $$d \in \{ 1,2,\ldots,6,7,9,11 \}$$
and $\Bir(\p^2)_{8}\subset \overline{\Bir(\p^2)_{10}}$.
\end{corollary}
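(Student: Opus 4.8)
The plan is to combine Proposition~\ref{prop:mi+mj=d-k} with Lemma~\ref{Lem:MultiplicitiesDminus1}, after first reducing to the case where the two base-points produced by the lemma are in good position (one proper, the other proper or in its first neighbourhood). So the first step is: given $\varphi\in\Bir(\p^2)_d$ with $d\in\{1,\dots,7,9,11\}$, apply Lemma~\ref{Lem:MultiplicitiesDminus1} to obtain two distinct base-points $p_1,p_2$ with $m_1+m_2=d-1$; for $d=8$ we instead get two base-points with $d-2\le m_1+m_2\le d-1$, but for the conclusion $\Bir(\p^2)_8\subset\overline{\Bir(\p^2)_{10}}$ what we need is $m_1+m_2=d-k$ with $k=2$, so we must argue that in the worst case ($d=8$ not de Jonqui\`eres) we can select the pair so that $k\le 2$, which is exactly what the lemma guarantees ($m_1+m_2\ge d-2$).

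Second, I would handle the position hypothesis of Proposition~\ref{prop:mi+mj=d-k}: it requires $p_1$ to be a proper point of $\p^2$ and $p_2$ either proper or in the first neighbourhood of $p_1$. The two points furnished by the lemma need not satisfy this, so the standard trick is to precompose and postcompose with linear automorphisms — more precisely, use the homeomorphism $\psi\mapsto\alpha\psi\beta$ of $\Bir(\p^2)$ (Remark~\ref{Rem:TopInv}) — which does not change the degree and moves base-points around, but this does not directly fix infinitely near points. The cleaner route, and the one I expect to be intended, is to reduce to the case of no infinitely near base-points by a preliminary degeneration or by observing that via Proposition~\ref{Prop:HudsonW}(ii) the component $\Bir^{\circ}_\Lambda$ contains maps whose base-points are as general as possible (in particular proper), and that it suffices to prove the closure statement on a dense subset of each component, since $\overline{\Bir(\p^2)_d}$ is closed and $\Bir(\p^2)_d=\pi_d(\Bir^{\circ}_d)$ with $\Bir^{\circ}_d$ having the components described in Proposition~\ref{prop:BCM}. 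On that dense subset $p_1,p_2$ are proper points in general position, so the hypotheses of Proposition~\ref{prop:mi+mj=d-k} hold.

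Third, once the hypotheses are in place, Proposition~\ref{prop:mi+mj=d-k} produces a morphism $\rho\colon\A^1\to\Bir(\p^2)$ with $\rho(0)=\varphi$ and $\deg\rho(t)=d+k$ for general $t$, where $k=1$ when $d\in\{1,\dots,7,9,11\}$ and $k=2$ when $d=8$ (here using $2d-m_1-m_2=d+k$). This exhibits $\varphi$ as a limit of maps of degree $d+k$, i.e.\ $\varphi\in\overline{\Bir(\p^2)_{d+k}}$, which is the assertion. Finally I would spread this from the dense subset to all of $\Bir(\p^2)_d$: the set of $\varphi$ of degree $d$ lying in $\overline{\Bir(\p^2)_{d+k}}$ is closed in $\Bir(\p^2)_d$ (preimage of a closed set under inclusion), contains a dense subset of each irreducible component of $\Bir^{\circ}_d$ — equivalently, via $\pi_d$ and Lemma~\ref{lem:WHalgebraic}, a dense subset of each component of $\Bir^{\circ}_d$ — hence contains all of $\Bir(\p^2)_d$; alternatively one invokes Corollary~\ref{Coro:ClosureBird0} and works with $\overline{\Bir_d^{\circ}}$ directly.

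The main obstacle I anticipate is the position/infinitely-near issue in the middle step: Lemma~\ref{Lem:MultiplicitiesDminus1} only asserts existence of two suitable base-points of $\varphi$, with no control on whether they are proper or whether the second is in the first neighbourhood of the first, whereas Proposition~\ref{prop:mi+mj=d-k} needs precisely that configuration. Resolving this requires either (a) the density argument above, reducing to general maps in each homaloidal component where Proposition~\ref{Prop:HudsonW}(ii) guarantees all base-points proper and in general position, or (b) a separate lemma showing that the two base-points can always be chosen so that at least one is proper and the other proper or immediately infinitely near — and care is needed because, e.g., a de Jonqui\`eres map can have all its simple base-points infinitely near. I expect the paper takes route (a), so the bookkeeping lies in checking that everything descends correctly through $\pi_d$ and that the relevant sets are genuinely closed.
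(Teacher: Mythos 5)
Your proposal follows essentially the same route as the paper: apply Lemma~\ref{Lem:MultiplicitiesDminus1} to get two base-points with $m_1+m_2=d-1$ (resp.\ $\ge d-2$ for $d=8$), use Proposition~\ref{prop:mi+mj=d-k} on the dense subset of each component of $\Bir(\p^2)_d$ where, by Proposition~\ref{prop:BCM}, the base-points are proper, and then spread the conclusion to the whole component because $\overline{\Bir(\p^2)_{d+k}}$ is closed. This is exactly the paper's argument.

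One small imprecision in your $d=8$ step: you cannot assert that $k=2$ there. The lemma only gives $d-2\le m_1+m_2\le d-1$, and for some types of degree $8$ (e.g.\ the de Jonqui\`eres type $(8;7,1^{14})$, where the only usable pair is $7+0=7$) the best you get is $k=1$, so the degeneration lands you in $\overline{\Bir(\p^2)_{9}}$ rather than directly in $\overline{\Bir(\p^2)_{10}}$. The missing (one-line) step, which the paper states explicitly, is the chaining: since $\Bir(\p^2)_{9}\subset\overline{\Bir(\p^2)_{10}}$ by the first part, taking closures gives $\overline{\Bir(\p^2)_{9}}\subset\overline{\Bir(\p^2)_{10}}$, so in either case ($k=1$ or $k=2$) the element of degree $8$ lies in $\overline{\Bir(\p^2)_{10}}$. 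With that sentence added, your argument is complete.
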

\begin{proof}Let $\gamma\in \Bir(\p^2)$ be of degree $d\in \{ 1,2,\ldots,6,7,9,11 \}$. 
By Lemma~\ref{Lem:MultiplicitiesDminus1} there exist two points $p_1$ and $p_2$ of respective multiplicity $m_1$ and $m_2$ with $m_1+m_2=d-1$. If $p_1,p_2$ are proper points in $\p^2$, then $\gamma\in \overline{\Bir(\p^2)_{d+1}}$ by Proposition~\ref{prop:mi+mj=d-k}. 

The set of elements $\gamma\in \Bir(\p^2)_d$ such that $p_1,p_2$ are proper being dense in each irreducible component of $\Bir(\p^2)_d$ (Proposition \ref{prop:BCM}), we obtain $\Bir(\p^2)_d\subset \overline{\Bir(\p^2)_{d+1}}$.

Similarly, if $\gamma$ has degree $d=8$, by Lemma~\ref{Lem:MultiplicitiesDminus1} there exist $p_1,p_2$ with $6=d-2 \leq m_1+m_2 \leq d-1=7$, hence we conclude as above that $\gamma$ is in either $\overline{\Bir(\p^2)_{10}}$ or $\overline{\Bir(\p^2)_{9}}$, the latter being included in $\overline{\Bir(\p^2)_{10}}$ by the first part of the proof.
\end{proof}

\begin{corollary}\label{Coro:kkover3}
Let $\gamma\in \Bir(\p^2)$ be a birational map of degree $d$. There exists an integer $k$ such that $1\le k\le \max\{1,\frac{d}{3}\}$ and
$\gamma\in \overline{\Bir(\p^2)_{d+k}}$.
\end{corollary}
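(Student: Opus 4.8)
The plan is to combine Lemma~\ref{Lem:MultiplicitiesDminus1}, which for every birational map of degree $d$ exhibits two points whose multiplicities sum to nearly $d$, with the degeneration produced by Proposition~\ref{prop:mi+mj=d-k}, and to argue one irreducible component of $\Bir(\p^2)_d$ at a time, exactly as in the proof of Corollary~\ref{Coro:ddp}. I would first record that $\max\{1,\frac{d}{3}\}\ge 1$ always and equals $\frac{d}{3}$ whenever $d\ge 4$, and that by Proposition~\ref{prop:mi+mj=d-k} a degree-$d$ map $\gamma$ possessing two points $p_1,p_2$ of multiplicities $m_1,m_2$ with $m_1+m_2=d-k$, where $p_1$ is a proper point of $\p^2$ and $p_2$ is either proper or in the first neighbourhood of $p_1$, automatically lies in $\overline{\Bir(\p^2)_{d+k}}$.

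For $d\le 7$ and $d\in\{9,11\}$, Corollary~\ref{Coro:ddp} already gives $\gamma\in\overline{\Bir(\p^2)_{d+1}}$, so $k=1$ works; for $d=8$ it gives $\gamma\in\overline{\Bir(\p^2)_{10}}$, so $k=2\le\frac{8}{3}=\max\{1,\frac{d}{3}\}$ works. This leaves $d=10$ and $d\ge 12$, which I would treat uniformly. Fix an irreducible component $Z$ of $\Bir(\p^2)_d$. By Proposition~\ref{prop:BCM} its general element $\gamma_0$ has a fixed homaloidal type and no infinitely near base-points, so every base-point of $\gamma_0$ is a proper point of $\p^2$. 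Applying Lemma~\ref{Lem:MultiplicitiesDminus1} to $\gamma_0$ produces two distinct points $p_1,p_2$ of multiplicities $m_1,m_2\ge 0$ with $d-3\le m_1+m_2\le d-1$ if $d=10$ and $\frac{2d}{3}<m_1+m_2<d$ if $d\ge 12$; put $k:=d-m_1-m_2$. Then $k\ge 1$ in both cases, while $k\le 3\le\frac{10}{3}$ if $d=10$ and $k<\frac{d}{3}$ if $d\ge 12$, so $1\le k\le\max\{1,\frac{d}{3}\}$. Each of $p_1,p_2$ is either a base-point of $\gamma_0$ --- hence a proper point of $\p^2$ for such $\gamma_0$ --- or, in the only exceptional case, namely $p_2$ of multiplicity $0$ which occurs for the de Jonqui\`eres type, may be chosen to be a general point of $\p^2$; in either case the hypotheses of Proposition~\ref{prop:mi+mj=d-k} are met, so $\gamma_0\in\overline{\Bir(\p^2)_{d+k}}$. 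Since such $\gamma_0$ form a dense subset of $Z$, passing to closures yields $Z\subseteq\overline{\Bir(\p^2)_{d+k}}$, and as $\gamma$ lies in some such component the claim follows.

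I do not expect a genuine obstacle, the arithmetic content being already packed into Lemma~\ref{Lem:MultiplicitiesDminus1}. The two points requiring care are: checking that the pair of points furnished by that lemma still satisfies the position hypotheses of Proposition~\ref{prop:mi+mj=d-k} after replacing $\gamma$ by a general member of its component --- this is precisely where one uses that a general member of a component of $\Bir(\p^2)_d$ has no infinitely near base-points (Proposition~\ref{prop:BCM}), and where the multiplicity-$0$ case must be dispatched separately --- and verifying, in each range of $d$, that $k=d-m_1-m_2$ satisfies $1\le k\le\max\{1,\frac{d}{3}\}$; both are immediate.
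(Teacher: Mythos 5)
Your proposal is correct and follows essentially the same route as the paper: the paper's proof of this corollary is literally "similar to that of Corollary~\ref{Coro:ddp}", i.e.\ combine Lemma~\ref{Lem:MultiplicitiesDminus1} with Proposition~\ref{prop:mi+mj=d-k} on a general (hence proper-base-point) member of each irreducible component via Proposition~\ref{prop:BCM}, and pass to closures. Your explicit bookkeeping of $k=d-m_1-m_2$ in the ranges $d\le 11$, $d=10$, $d\ge 12$, and your handling of the multiplicity-$0$ point, match what the paper leaves implicit.
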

\begin{proof}The proof is similar as the one of Corollary~\ref{Coro:ddp}.\end{proof}
\subsection{Degeneration associated to five general base-points}

Let us give another degeneration process, similar to Lemma~\ref{Lemm:QuadraticDeg} and Proposition~\ref{prop:mi+mj=d-k} but with more points. It will be useful to show that $\Bir(\p^2)_{10}\subset \overline{\Bir(\p^2)_{12}}$.

\begin{lemma}\label{Lemm:QuinticDeg}
Let $p_1,\dots,p_5$ be five distinct points of $\p^2$, such that no $3$ of them are collinear.

Then, there exists an open subset $U\subset \A^1$ containing $0$ and two morphisms $\nu\colon U\to \Bir(\p^2)$ and $p_6\colon U\to \p^2$, such that the following hold:

\begin{enumerate}[$(1)$]
\item
For $t\not=0$, the map $\nu(t)$ has degree $5$ and six base-points of multiplicity $2$, being $p_1,\dots,p_5,p_6(t)$, which are such that no $3$ are collinear and which do not belong to the same conic.\item
The map $\nu(0)$ is the identity and $p_6(0)$ belongs to to the conic passing through $p_1,\dots,p_5$.
\end{enumerate}
\end{lemma}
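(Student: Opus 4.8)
The plan is to exhibit an explicit one-parameter family of quintic Cremona transformations of homaloidal type $(5;2^6)$, built from the classical net of quintics through six points grouped into three pairs, which degenerates to a projective transformation exactly when the sixth point runs onto the conic through the other five.

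First I fix the conic $C$ through $p_1,\dots,p_5$ and choose the morphism $p_6\colon\A^1\to\p^2$ so that $p_6(0)$ is a general point of $C$ (distinct from the $p_i$ and off every line $\overline{p_ip_j}$) and $p_6$ parametrises a general affine line through $p_6(0)$; then $p_6(t)\notin C$ and $p_1,\dots,p_5,p_6(t)$ are in general position for general $t$. For $i=1,\dots,6$ let $Q_i$ be the conic through the five points $\{p_j:j\neq i\}$, given by an explicit determinantal formula so that its coefficients lie in $\k[t]$ (note $Q_6=C$ is independent of $t$), and let $L_{ij}\in\k[x,y,z]_1$ vanish on $\overline{p_ip_j}$. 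I then set
$$\varphi(t)=\bigl[\,Q_1Q_2L_{12}\,:\,Q_3Q_4L_{34}\,:\,Q_5Q_6L_{56}\,\bigr].$$
By construction each of these three quintics vanishes doubly at every point $p_1,\dots,p_5,p_6(t)$. For general $t$ the three quintics are moreover $\k$-linearly independent (this holds at $t=0$, where they equal $C^2$ times three independent linear forms, hence on a neighbourhood of $0$) and have no common factor; since the type $(5;2^6)$ is proper (Example~\ref{Exa:Types}, Proposition~\ref{Prop:HudsonW}) the homaloidal net on six general points is two-dimensional, so for general $t$ the three quintics form a basis of that net. Thus for general $t$ the map $\varphi(t)$ lies in $\Bir_5^{\circ}$, has degree $5$, and has base-points exactly $p_1,\dots,p_5,p_6(t)$, each of multiplicity $2$, with no three of the six collinear and the six not on a conic.

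Next I analyse $t\to0$. Since all six points then lie on $C$ and no three of them are collinear, the conic through any five of them is $C$, so $Q_i(0)=\lambda_iC$ with $\lambda_i\in\k^{*}$ for all $i$; hence, as a point of $W_5$,
$$\varphi(0)=\bigl[\,C^2L_{12}\,:\,C^2L_{34}\,:\,C^2L_{56}(0)\,\bigr],$$
whose associated birational map is the projective transformation $\alpha:=[\,L_{12}:L_{34}:L_{56}(0)\,]$, which lies in $\PGL(3,\k)$ because for a general configuration the three lines $\overline{p_1p_2}$, $\overline{p_3p_4}$, $\overline{p_5p_6(0)}$ are not concurrent. (If $\varphi$ carries a common factor of $t$, one divides it out first; this does not change the value at $0$.) Finally I put $\nu(t)=\alpha^{-1}\circ\varphi(t)$. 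Composing on the left with the fixed isomorphism $\alpha^{-1}$ changes neither the degree nor the base-points and their multiplicities, so for general $t\neq0$ the map $\nu(t)$ has the properties required in part $(1)$ of the statement, while $\nu(0)=\alpha^{-1}\circ\alpha=\mathrm{id}$. Taking for $U$ the cofinite, hence open, subset of $\A^1$ that contains $0$ and on which $\varphi$, and therefore $\nu$, is a morphism to $\Bir(\p^2)$ with all of the above holding finishes the argument.

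The step I expect to be the real work is the claim, for general $t$, that $Q_1Q_2L_{12}$, $Q_3Q_4L_{34}$, $Q_5Q_6L_{56}$ are $\k$-linearly independent with no common factor and span the whole homaloidal net of type $(5;2^6)$ on $p_1,\dots,p_5,p_6(t)$ (rather than a sub-pencil), so that $\varphi(t)$ really is dominant of degree $5$ with exactly those base-points. The locus of bad $t$ is closed, and independence already holds at $t=0$, so it persists on an open neighbourhood of $0$; the identification of the span with the homaloidal net then uses the properness of $(5;2^6)$. A secondary, purely technical point is to see that $t\mapsto\varphi(t)$ is a genuine morphism in the sense of Definition~\ref{Defi:Family} near $0$, which is clear because the coefficients are polynomial in $t$ and the fibre over $0$ is the everywhere-defined linear map $\alpha$.
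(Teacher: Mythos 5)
Your construction is a genuinely different route from the paper's (the paper builds $\nu(t)$ as a composition $\sigma\,\psi(t)\,\nu'(t)\,\sigma$ of the standard quadratic map, a degenerating quadratic family from Lemma~\ref{Lemm:QuadraticDeg} and a correcting projectivity, so birationality of every $\nu(t)$ is automatic and the linear system is just read off), but as written it has a genuine gap exactly at the step you flag as ``the real work''. You need that, for the specific configurations $p_1,\dots,p_5,p_6(t)$, the net spanned by $Q_1Q_2L_{12}$, $Q_3Q_4L_{34}$, $Q_5Q_6L_{56}$ is the homaloidal net of type $(5;2^6)$ and that the resulting map is birational of degree $5$ with exactly those six double points. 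The tool you invoke for this, properness of $(5;2^6)$ together with Proposition~\ref{Prop:HudsonW}, only provides a dense open subset $U\subset(\p^2)^6$ of configurations carrying such a map. Here five of the six points are \emph{fixed and arbitrary} (subject only to no three collinear) and the sixth moves along a single line through a point of $C$, so the one\--parameter family of configurations you consider need not meet that dense open set at all; genericity in $t$ does not give genericity in $(\p^2)^6$. Linear independence plus absence of a common factor, which is what you actually verify near $t=0$, is not enough: it does not rule out extra base points of the net, unassigned multiplicities, or the map being non-birational onto its image, and without the homaloidal property the count ``three independent members of a two-dimensional net'' has no basis.

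The statement you need is true: for any six distinct points with no three collinear and not all on a conic, the quintics with double points at the six points form a homaloidal net (e.g.\ via the cubic del Pezzo surface obtained by blowing them up, where $5H-2\sum E_i$ is nef with square $1$ and the six conic classes are disjoint $(-1)$-curves, or by a direct B\'ezout analysis showing the base locus of your explicit net is exactly the six double points, so two general members meet in $25-24=1$ free point and the map is birational). But neither argument is in your proposal nor in the paper, so you must supply one of them, or else sidestep the issue as the paper does by producing $\nu(t)$ as a composition of quadratic and linear maps, which is birational by construction. The remaining points are minor: in the limit $\varphi(0)$ the conics give $Q_i(0)=\lambda_iC$ with constants $\lambda_i$ that should be carried into the definition of $\alpha$ (or absorbed into the chosen equations), and you should say that $p_6(0)$ is chosen general on $C$ so that $L_{12},L_{34},L_{56}(0)$ are not concurrent; both are easily fixed.
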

\begin{proof}Since no three of the points $p_1,\dots,p_5$ are collinear, we can assume that $p_1=[1:0:0]$, $p_2=[0:1:0]$, $p_3=[0:0:1]$.
We then denote by $\sigma$ the standard quadratic transformation $$\sigma\colon [x:y:z]\dashmapsto [yz:xz:xy].$$ Note that $\sigma$ is a local isomorphism at $p_4$, $p_5$ and that $p_1,p_2,p_3,\sigma(p_4),\sigma(p_5)$ are five points of $\p^2$ such that no $3$ are collinear.

Applying Lemma~\ref{Lemm:QuadraticDeg}, we obtain morphisms $\nu'\colon \A^1\to \Bir(\p^2)$ and $p_6'\colon \A^1\to \p^2$ such that $\nu'(0)$ is the identity and $\nu'(t)$ is a quadratic map with base-points $\sigma(p_4),\sigma(p_5),p_6'(t)$. Moreover, $p_6'(t)$ is collinear with $\sigma(p_4)$ and $\sigma(p_5)$ if and only if $t=0$. We can moreover choose that $p_6(0)$ does not belong to the triangle $xyz=0$, conjugate $\nu'$ with an automorphism of $\p^2$ if needed.

We denote by $U'\subset \A^1$ the dense open subset such that $p_6'(t)$ is not collinear with two of the points $p_1$, $p_2$, $p_3$, $\sigma(p_4)$,$\sigma(p_5)$ and does not belong to the conic through these points. In particular, $\nu'(t)$ is a local isomorphism at $p_1,p_2,p_3$ for each $t\in U'$. We have then a morphism map $\psi\colon U'\to  \PGL(3,\k)$  (or equivalently an element of $\PGL(3,\k(t))$) such that  $\psi(t)$ sends $\nu'(t)(p_i)$ onto $p_i$ for $i=1,2,3$. Since $\nu'(0)$ is the identity, we have $0\in U'$ and can choose $\psi(0)$ to be the identity. 

We then define a morphism $\nu\colon U'\to \Bir(\p^2)$ in the following way:
$$\nu(t)=\sigma\psi(t)\nu'(t)\sigma.$$
For $t=0$, $\nu(t)$ is the identity, since $\psi(0)$ and $\nu'(0)$ are the identity. It remains to observe that for a general $t\in U'$ the linear system of $\nu(t)$ has the desired form.

For $t\not=0$, the linear system of $\sigma\psi(t)$ consists of conics through $\nu'(t)(p_1)$, $\nu'(t)(p_2)$, $\nu'(t)(p_3)$. The linear system of 
$\sigma\psi(t)\nu'(t)$ consists  then of quartics having multiplicity two at $\sigma(p_4),\sigma(p_5),p_6'(t)$ and multiplicity one at $p_1,p_2,p_3$. The linear system of $\nu$ has then multiplicity $5$ and multiplicity $2$ at $p_1,p_2,p_3,p_4,p_5, \sigma(p_6'(t))$. We define $U\subset U'$ to be the union of $0$ with the points of $U'$ such that $p_6(t)=\sigma(p_6'(t))$ is a proper point of $\p^2$ and obtain the result. The degeneration of $\nu(t)$ comes because $p_6(0)$ belongs to the conic through $p_1,\dots,p_5$, which is the image by $\sigma$ of the line through $\sigma(p_4)$ and $\sigma(p_5)$.
\end{proof}
\begin{proposition}\label{prop:5points}
Suppose that $\gamma\in \Bir(\p^2)$ is a birational map of degree $d$ and has five proper base-points $p_1,\dots,p_5$ of multiplicity $m_1,\dots,m_5$, such that no $3$ of them are collinear and such that $\sum_{i=1}^5 m_i=2d-k$.

Then, there exists a morphism $\rho\colon U\to \Bir(\p^2)$, where $U\subset \A^1$ is an open subset containing $0$, such that $\rho(0)=\gamma$ and $\rho(t)$ has degree $d+2k$ for a general $t\not=0$.
\end{proposition}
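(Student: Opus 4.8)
The plan is to mimic the proof of Proposition~\ref{prop:mi+mj=d-k}, replacing the quadratic degeneration of Lemma~\ref{Lemm:QuadraticDeg} by the quintic one of Lemma~\ref{Lemm:QuinticDeg}. Since $p_1,\dots,p_5$ are five distinct proper points of $\p^2$ with no three collinear, Lemma~\ref{Lemm:QuinticDeg} furnishes an open subset $U\subset\A^1$ with $0\in U$, a morphism $\nu\colon U\to\Bir(\p^2)$ with $\nu(0)=\mathrm{id}$, and a non-constant morphism $p_6\colon U\to\p^2$ such that for $t\neq 0$ the map $\nu(t)$ has degree $5$ with six base-points of multiplicity $2$, namely $p_1,\dots,p_5,p_6(t)$. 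I would then put $\rho(t)=\gamma\circ\nu(t)^{-1}$. This is a morphism in the sense of Definition~\ref{Defi:Family} --- if $\nu$ is realized by a $U$-birational map $f$ of $U\times\p^2$, then $f^{-1}$ realizes $t\mapsto\nu(t)^{-1}$ and post-composing with $\mathrm{id}_U\times\gamma$ realizes $\rho$ --- and from $\nu(0)=\mathrm{id}$ one gets $\rho(0)=\gamma$ at once.

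The remaining task is to show that $\deg\rho(t)=d+2k$ for general $t\neq 0$. The homaloidal system of $\rho(t)=\gamma\circ\nu(t)^{-1}$ is the image under $\nu(t)$ of the homaloidal system $\mathcal{H}_\gamma$ of $\gamma$. Blowing up the six proper base-points $p_1,\dots,p_5,p_6(t)$ of $\nu(t)$ resolves $\nu(t)$, which there pulls a line back to $5H-2\sum_{i=1}^{6}E_i$; for general $t$ the point $p_6(t)$ avoids the finitely many base-points of $\gamma$ (here I use that $p_6$ is non-constant), so the strict transform on this surface of a general member of $\mathcal{H}_\gamma$ has class $dH-\sum_{i=1}^{5}m_iE_i$, and intersecting the two classes shows that its image under $\nu(t)$ has degree $5d-2\sum_{i=1}^{5}m_i=5d-2(2d-k)=d+2k$. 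Since a general member of $\mathcal{H}_\gamma$ is irreducible --- it is $\overline{\gamma^{-1}(\ell)}$ for a general line $\ell$ --- and is not among the finitely many curves contracted by $\nu(t)$, its image is irreducible of degree $d+2k$, so $\mathcal{H}_{\rho(t)}$ has no fixed part and $\deg\rho(t)=d+2k$.

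The only genuine subtlety, which is the one tacitly handled in Proposition~\ref{prop:mi+mj=d-k}, is ruling out a spurious drop of the degree below $5d-2\sum m_i$; this is exactly what the irreducibility argument together with the general position of $p_6(t)$ guaranteed by Lemma~\ref{Lemm:QuinticDeg} takes care of. Base-points of $\gamma$ infinitely near $p_1,\dots,p_5$ cause no trouble either, since $\nu(t)$ has no base-points infinitely near the $p_i$ and so they do not enter the intersection computation above. I expect no further difficulty; the proof should be essentially as short as that of Proposition~\ref{prop:mi+mj=d-k}.
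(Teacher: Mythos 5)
Your proposal is correct and follows essentially the same route as the paper: the paper likewise sets $\rho(t)=\gamma\circ\nu(t)^{-1}$ with $\nu$ from Lemma~\ref{Lemm:QuinticDeg} and computes the degree $5d-2\sum_{i=1}^5 m_i=d+2k$ for general $t$. Your extra verification (general position of $p_6(t)$, irreducibility of the image of a general member of the homaloidal system) just spells out details the paper leaves implicit.
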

\begin{proof}
We use the morphism $\nu\colon U\to \Bir(\p^2)$ given by Lemma~\ref{Lemm:QuinticDeg} and define $\rho$ as $\rho(t)=\gamma\circ \nu(t)^{-1}$. By construction, this is a morphism which satisfies $\rho(0)=\gamma$. Moreover, the degree of the map $\rho(t)$ for a general $t$ is equal to $5d-2m_1-2m_2-2m_3-2m_4-2m_5=d+2k$.
\end{proof}
\begin{remark}
If $p_1,\dots,p_r$ are the base-points of $\gamma$ of multiplicity $m_1,\dots,m_r$, the degeneration provided by Proposition~\ref{prop:5points} gives a family of birational maps of degree $d+2k$ with base-points of multiplicity $m_1+k,m_2+k,m_3+k,m_4+k,m_5+k,k,m_6,\dots,m_r$. The point of multiplicity $k$ created degenerates to a point which belongs to the conic through the first five points, and the linear system becomes the union of the linear system of $\gamma$ with $k$ times the conic.
\end{remark}
\begin{corollary}\label{Coro:101112}
We have  $\Bir(\p^2)_{10}\subset \overline{\Bir(\p^2)_{12}}$.
\end{corollary}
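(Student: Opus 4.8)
The plan is to argue exactly as in the proof of Corollary~\ref{Coro:ddp}. By Proposition~\ref{prop:BCM} the irreducible components of $\Bir^\circ_{10}$ correspond to the proper homaloidal types of degree $10$, a general element of the component attached to $\Lambda$ is a birational map of type $\Lambda$ with all base-points proper, and by Proposition~\ref{Prop:HudsonW}$(ii)$ we may in addition assume these base-points to be in general position; the ``nice'' maps so obtained are dense in the component. Since $\pi_{10}$ is continuous and $\overline{\Bir(\p^2)_{12}}$ is closed, it is enough to check that for each of the $17$ proper homaloidal types $\Lambda$ of degree $10$ (the two types $(10;9,1^{18})$ and $(10;8,2^8,1^3)$, together with the fifteen degree-$10$ entries of Table~\ref{tab:Hudson11}) a sufficiently general map of type $\Lambda$ lies in $\overline{\Bir(\p^2)_{12}}$. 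I would use two degenerations: Proposition~\ref{prop:5points} with $k=1$, which puts a degree-$10$ map with five proper base-points, no three collinear, whose multiplicities sum to $2\cdot 10-1=19$, into $\overline{\Bir(\p^2)_{12}}$; and Proposition~\ref{prop:mi+mj=d-k}, which puts a degree-$10$ map with two base-points $p_1$ (proper) and $p_2$ (proper or infinitely near $p_1$) of multiplicities summing to $8$ (so $k=2$) directly into $\overline{\Bir(\p^2)_{12}}$, while if they sum to $9$ (so $k=1$) it puts the map into $\overline{\Bir(\p^2)_{11}}\subset\overline{\Bir(\p^2)_{12}}$, the inclusion coming from Corollary~\ref{Coro:ddp} applied with $d=11$.

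The bulk of the argument is then a direct inspection of the list. All types except
$$(10;9,1^{18}),\qquad (10;8,2^8,1^3),\qquad (10;6,3^7),\qquad (10;5^3,4,1^8),\qquad (10;4^6,1^3)$$
admit five of their multiplicities adding up to $19$, so Proposition~\ref{prop:5points} applies to them: for instance one takes the point of multiplicity $7$ and four points of multiplicity $3$ for $(10;7,3^5,1^5)$ and $(10;7,3^4,2^3,1^2)$; the three points of multiplicity $5$ together with two of multiplicity $2$ for $(10;5^3,2^6)$; four points of multiplicity $4$ and one of multiplicity $3$ for $(10;4^5,3^2,1)$; a point of multiplicity $6$, two of multiplicity $4$, one of multiplicity $3$ and one of multiplicity $2$ for $(10;6,4^2,3^2,2^3,1)$; and so on, the remaining cases being entirely similar. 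By Propositions~\ref{prop:BCM} and~\ref{Prop:HudsonW}, a general map of each such type has the five chosen base-points proper and with no three of them collinear, so by Proposition~\ref{prop:5points} it belongs to $\overline{\Bir(\p^2)_{12}}$.

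For the five exceptional types, where no five of the multiplicities sum to $19$, I would fall back on Proposition~\ref{prop:mi+mj=d-k}. For $(10;6,3^7)$ one takes the point of multiplicity $6$ and a point of multiplicity $3$; for $(10;5^3,4,1^8)$ a point of multiplicity $5$ and the point of multiplicity $4$; for $(10;9,1^{18})$ the point of multiplicity $9$ together with a general point of $\p^2$ (of multiplicity $0$, exactly as de Jonqui\`eres types are treated in Corollary~\ref{Coro:ddp}). In all three cases the two multiplicities sum to $d-1=9$, so a general map of the type lies in $\overline{\Bir(\p^2)_{11}}\subset\overline{\Bir(\p^2)_{12}}$. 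Finally, for $(10;8,2^8,1^3)$ one takes the point of multiplicity $8$ and a general point of $\p^2$, and for $(10;4^6,1^3)$ two points of multiplicity $4$; here the two multiplicities sum to $d-2=8$, and Proposition~\ref{prop:mi+mj=d-k} places a general map of the type directly in $\overline{\Bir(\p^2)_{12}}$. Having covered all $17$ types, every component of $\Bir^\circ_{10}$ maps into $\overline{\Bir(\p^2)_{12}}$, whence $\Bir(\p^2)_{10}\subset\overline{\Bir(\p^2)_{12}}$.

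The argument is essentially bookkeeping; the one structural point worth flagging is why both degeneration lemmas are needed. The quintic degeneration of Proposition~\ref{prop:5points} fails for the five exceptional types, so for those one is forced onto the detour through $\overline{\Bir(\p^2)_{11}}$ — itself available only thanks to Corollary~\ref{Coro:ddp} — or onto the direct jump to degree $12$ with $k=2$; conversely the quadratic degeneration of Proposition~\ref{prop:mi+mj=d-k} alone would not suffice, as e.g.\ for $(10;5^3,2^6)$ no pair of base-point multiplicities sums to $8$ or $9$, so one really does need Proposition~\ref{prop:5points} there. I expect the identification of exactly which type falls into which case to be the most error-prone part of the write-up.
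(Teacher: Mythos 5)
Your proof is correct and uses essentially the same ingredients as the paper: a type-by-type analysis of the $17$ proper homaloidal types of degree $10$, the degenerations of Propositions~\ref{prop:mi+mj=d-k} and~\ref{prop:5points}, density of general maps in each component $\Bir_\Lambda^\circ$, and the inclusion $\Bir(\p^2)_{11}\subset\overline{\Bir(\p^2)_{12}}$ from Corollary~\ref{Coro:ddp}. The only difference is bookkeeping: the paper applies the two-point degeneration (a pair of multiplicities summing to $8$ or $9$) to every type except $(10;5^3,2^6)$, reserving Proposition~\ref{prop:5points} for that single type, whereas you make the five-point degeneration the workhorse and fall back on the two-point one (with a multiplicity-$0$ point where needed, as the paper itself does in Corollary~\ref{Coro:ddp}) only for your five exceptional types.
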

\begin{proof}
Each irreducible component of $\Bir(\p^2)_{10}$ corresponds to a $\Bir_\Lambda^\circ$ where $\Lambda=(d;m_1,\dots,m_k)$ is a proper homaloidal type (Proposition~\ref{prop:BCM} and Notation~\ref{Notation:BirLambda}). If there are two multiplicities $m_i,m_j$ such that $m_i+m_j=9$ or $m_i+m_j=8$, then Proposition~\ref{prop:mi+mj=d-k} shows that a general element of $\Bir_\Lambda^\circ$ belongs to $\overline{\Bir(\p^2)_{11}\cup \Bir(\p^2)_{12}}=\overline{\Bir(\p^2)_{12}}$, where the last equality follows from the fact that $\Bir(\p^2)_{11}\subset\overline{\Bir(\p^2)_{12}}$ by Corollary \ref{Coro:ddp}. Looking at Example~\ref{Exa:Types}, one sees that this holds for each proper homaloidal type of degree $10$, except for $\Lambda=(10;5^3,2^6)$. We then apply Proposition~\ref{prop:5points} to the first $5$ multiplicities, and obtain that a general element of $\Bir_\Lambda^\circ$ is contained in $\overline{\Bir(\p^2)_{12}}$.
\end{proof}

\subsection{Degeneration associated to five base-points, three of them being collinear}
We finish this section with another degeneration process, which works with maps having three collinear points. It will be useful to show that some maps of type $(8;4^3,2^3,1^3)$ belong to $\overline{\Bir(\p^2)_9}$ (Corollary~\ref{Cor8421limit}), although this is not true for a general element of type
$(8;4^3,2^3,1^3)$ (Corollary~\ref{Coro:GeneralLimitdplus1}), and the same for maps of type $(10;5^3,2^6)$.

\begin{lemma}\label{Lemm:QuarticDeg}
Let $p_1,\dots,p_5$ be five distinct points of $\p^2$, such that $p_1,p_2,p_3$ are collinear but no other triple of points belongs to the same line.

Then, there exists an open subset $U\subset \A^1$ containing $0$ and two morphisms $\nu\colon U\to \Bir(\p^2)$ and $p_6\colon U\to \p^2$, such that the following hold:

\begin{enumerate}[$(1)$]
\item
For $t\not=0$, the map $\nu(t)$ has degree $4$ and six base-points, namely $p_1,p_2,p_3$ with multiplicity $1$ and $p_4,p_5,p_6(t)$ with multiplicity $2$, and $p_6(t)$ is not collinear with any other base-point.\item
The map $\nu(0)$ is the identity and $p_6(0)$ belongs to to the line through $p_4$ and $p_5$.
\end{enumerate}
\end{lemma}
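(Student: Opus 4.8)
The plan is to mimic the proof of Lemma~\ref{Lemm:QuinticDeg}: I build $\nu(t)$ as the composition of the elementary quadratic degeneration of Lemma~\ref{Lemm:QuadraticDeg} at $p_4,p_5$ with a second, carefully chosen family of quadratic Cremona maps having the three collinear points among its base-points, arranged so that the two degenerations cancel at $t=0$. The underlying geometry is that the homaloidal type $(4;2^3,1^3)$ (which is proper, by Hudson's test or Example~\ref{Exa:Types}) admits a representative with its three simple base-points collinear: a quadratic transformation centred at the three double base-points sends the line through the simple ones to an irreducible conic, so the residual net is an honest homaloidal net of conics.

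After a projectivity I may assume $p_4=[1:0:0]$ and $p_5=[0:1:0]$; by hypothesis $p_1,p_2,p_3$ lie on a line $\ell$ avoiding $p_4,p_5$, and no $p_i$ lies on $p_4p_5=\{z=0\}$ (else it would be collinear with $p_4,p_5$). Let $\kappa(t)=[(ty-z)x:(tx-z)y:(tx-z)(ty-z)]$ be the map of Lemma~\ref{Lemm:QuadraticDeg}, with base-points $p_4,p_5,p_6(t)=[1:1:t]$ for $t\neq0$, with $\kappa(0)=L:=[x:y:-z]$, and with $p_6(0)=[1:1:0]\in p_4p_5$. For general $t$, $\kappa(t)$ is a local isomorphism near $p_1,p_2,p_3$ and carries $\ell$ to an irreducible conic, so $q_i(t):=\kappa(t)(p_i)$ are three distinct non-collinear points with $q_i(0)=L(p_i)$ lying on $m:=L(\ell)$. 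Now I choose, algebraically in $t$, a basis of the vector space of conics through $q_1(t),q_2(t),q_3(t)$ --- of dimension $3$ for all $t$ near $0$, being $m\cdot\langle x,y,z\rangle$ at $t=0$ --- which gives a family $\psi(t)$ of quadratic Cremona maps with base-points $q_i(t)$ whose limit in $\Bir(\p^2)$ is the \emph{invertible} linear map $[\ell_0:\ell_1:\ell_2]$, where $m\ell_i$ is the $i$-th basis element at $t=0$. Composing $\psi(t)$ on the left with a family of projectivities that at $t=0$ equals $L^{-1}$ followed by the inverse of $[\ell_0:\ell_1:\ell_2]$, I may assume $\psi(0)=L^{-1}$. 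Then $\nu(t):=\psi(t)\circ\kappa(t)$ is a morphism $U\to\Bir(\p^2)$ on a suitable open $U\ni0$ (composition of families is a family), and $\nu(0)=L^{-1}\circ L=\mathrm{id}$.

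For $t\neq0$ the linear system of $\psi(t)$ is the net of all conics through $q_1(t),q_2(t),q_3(t)$, and for general $t$ a general such conic avoids $p_4,p_5,p_6(t)$; hence $\kappa(t)^{*}$ of a general member is a quartic with double points at $p_4,p_5,p_6(t)$ and simple points at $\kappa(t)^{-1}(q_i)=p_i$, so $\nu(t)$ has degree $4$, of homaloidal type $(4;2^3,1^3)$, with double base-points $p_4,p_5,p_6(t)$ and simple base-points $p_1,p_2,p_3$. Shrinking $U$ to the locus where in addition $p_6(t)$ is a proper point collinear with no other base-point yields the statement. I expect the main obstacle to be the construction and control of $\psi(t)$: one must pin down its base-points as exactly $\kappa(t)(p_i)$ while forcing its limit in $\Bir(\p^2)$ to be the prescribed linear map $L^{-1}$, so that the two degenerations cancel to give $\nu(0)=\mathrm{id}$ --- this rests on the fact that a degenerating net of conics whose base-points become collinear has an invertible linear map (not the identity) as its limit in $\Bir(\p^2)$ --- and then to carry out the Bézout count confirming that $\psi(t)\circ\kappa(t)$ has degree exactly $4$.
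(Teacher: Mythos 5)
Your argument is essentially correct, but it realizes the degeneration differently from the paper. The paper's proof is completely explicit: after normalizing $p_1=[0:0:1]$, $p_2=[1:1:1]$, $p_4=[0:1:0]$, $p_5=[1:0:0]$, $p_3=[1:1:a]$, it writes $\nu(t)=\rho(0)\tau(0)^{-1}\kappa(0)\,\kappa(t)\tau(t)\,\rho(t)$, where $\rho(t)$ is an explicit quadratic involution with base-points $p_4,p_5$ and a point \emph{infinitely near} $p_5$ (this is the paper's $p_6(t)$, the direction $yt+z=0$), and $\tau(t)$ is an explicit family of projectivities arranged so that the simple base-points land on $p_1,p_2,p_3$ and everything collapses to a projectivity at $t=0$; no auxiliary family has to be constructed abstractly, only a finite set of bad parameters $\{-1,a\}$ is excluded. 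You instead put the moving double point at the proper point $p_6(t)=[1:1:t]$ of Lemma~\ref{Lemm:QuadraticDeg} (so your $p_6\colon U\to\p^2$ matches the statement literally, while the paper's is an infinitely near point) and you manufacture the second quadratic family $\psi(t)$ abstractly, by trivializing the rank-$3$ family of conics through the moving points $q_i(t)=\kappa(t)(p_i)$, which stays of dimension $3$ at $t=0$ because three distinct (even collinear) points impose independent conditions; the limit triple is $m\cdot(\ell_0,\ell_1,\ell_2)$, hence an invertible projectivity, which you then correct so that $\nu(0)=\mathrm{id}$. This is exactly the mechanism the paper uses in Lemma~\ref{Lemm:QuinticDeg} (the family $\psi(t)\in\PGL(3,\k(t))$), and it is legitimate here: over an open subset of $\A^1$ the coordinate ring is a principal ideal domain, so the constant-rank kernel bundle is free and a basis specializing to a prescribed basis at $t=0$ exists; your Bézout/Noether count for degree $4$ and type $(4;2^3,1^3)$ is also fine.

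One configuration needs patching: you normalize only $p_4,p_5$, so nothing prevents the line $\ell$ through $p_1,p_2,p_3$ from being the line $x=y$, which contains $[1:1:t]$ for all $t$; in that case $p_6(t)$ is collinear with $p_1,p_2$ for every $t$, the images $q_i(t)$ remain collinear, and your final ``shrink $U$'' step has nothing left to shrink to. This is easily repaired with the remaining freedom: the stabilizer of $p_4,p_5$ in $\PGL(3,\k)$ acts so as to move $\ell$ off $x=y$ (equivalently, conjugate $\kappa$ by a diagonal automorphism fixing $p_4,p_5$ and $\{z=0\}$, replacing the third base-point family $[1:1:t]$ by $[1:c:t]$ for suitable $c$). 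With that adjustment, and with the finitely many nonzero bad parameters removed from $U$, your construction proves the lemma.
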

\begin{proof}
The points $p_1,p_2,p_4,p_5$ being in general position, we can assume, up to change of cooordinates, that
 $$p_1=[0:0:1],\ p_2=[1:1:1],\ p_4=[0:1:0],\ p_5=[1:0:0].$$
 This implies that $p_3=[1:1:a]$ for some $a\in \k^{*}$.

We consider the morphisms $\kappa,\rho,\tau\colon \A^1\to \Bir(\p^2)$ defined by
$$\begin{array}{rrcll}
\kappa(t)&\colon& [x:y:z]&\dashmapsto &[(ty-z)x: (tx-z)y: (tx-z)(ty-z)]\\
\rho(t)&\colon &[x:y:z]&\dashmapsto &[x(yt+z): yz: -z(yt+z)]\\
\tau(t)& \colon & [x:y:z] & \mapsto &[(a+t)y+z: (a-1)y: ax-(a+t)y]
\end{array}$$
(the map $\kappa$ is the same as in Lemma~\ref{Lemm:QuadraticDeg}). Observe that $\tau(t)\in \Aut(\p^2)$ for each $t$, that $\rho(0),\kappa(0)\in \Aut(\p^2)$ and that for a general $t$, $\kappa(t),\rho(t)$ are quadratic birational involutions of $\p^2$.
Moreover, the base-points of $\rho(t)$ are $p_4,p_5$ and the point infinitely to $p_5$ that corresponds to the line $yt+z=0$, that we will denote $p_6(t)$. The base-points of $\kappa(t)$ are $p_4,p_5,[1:1:t]$.

We then define $\nu\colon \A^1\dasharrow \Bir(\p^2)$ as $\nu(t)=\rho(0)\tau(0)^{-1}\kappa(0)\kappa(t)\tau(t)\rho(t)$. The linear system of $\nu(t)$ is given by comparing the linear system of $\kappa(t)\tau(t)$ with the one of $\rho(t)^{-1}=\rho(t)$. The linear system of $\kappa(t)\tau(t)$ consists of conics through $$\tau(t)^{-1}(\{p_4,p_5,[1:1:t]\})=\{[a+t: a: -a(a+t)],p_1,[t+1:1:-t-1]\}.$$
For $t\notin \{0,-1,a\}$, the three points are different from the three base-points of $\rho(t)$, so $\nu(t)$ has degree $4$, multiplicity $2$ at the three base-points of $\rho(t)$ and multiplicity $1$ at 
$$\rho(t)(\tau(t)^{-1})(\{p_4,p_5,[1:1:t]\})=\{p_2,p_1,p_3\}.$$
Choosing $U=\A^1\setminus \{-1,a\}$, we obtain the result.
\end{proof}

\begin{proposition}\label{prop:5points3collinear}
Suppose that $\gamma\in \Bir(\p^2)$ is a birational map of degree $d$ and has five proper base-points $p_1,\dots,p_5$ of multiplicity $m_1,\dots,m_5$, such that $p_1,p_2,p_3$ are collinear but no other triple of points belongs to the same line and such that $m_1+m_2+m_3+2m_4+2m_5=3d-k$.

Then, there exists a morphism $\rho\colon U\to \Bir(\p^2)$, where $U\subset \A^1$ is an open subset containing $0$, such that $\rho(0)=\gamma$ and $\rho(t)$ has degree $d+k$ for a general $t\not=0$.
\end{proposition}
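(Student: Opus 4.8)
The plan is to argue exactly as in Propositions~\ref{prop:mi+mj=d-k} and~\ref{prop:5points}, simply replacing the quadratic (resp.\ quintic) degeneration used there by the quartic one provided by Lemma~\ref{Lemm:QuarticDeg}. First I would apply that lemma to the five points $p_1,\dots,p_5$: their configuration is precisely the one the lemma requires, namely $p_1,p_2,p_3$ collinear and no other triple of the five collinear. This yields an open subset $U\subset\A^1$ containing $0$ together with morphisms $\nu\colon U\to\Bir(\p^2)$ and $p_6\colon U\to\p^2$ such that $\nu(0)$ is the identity and, for every $t\neq 0$ in $U$, the map $\nu(t)$ has degree $4$ with base-points $p_1,p_2,p_3$ of multiplicity $1$ and $p_4,p_5,p_6(t)$ of multiplicity $2$, the point $p_6(t)$ being a proper point of $\p^2$ not collinear with the others.

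Next I would set $\rho(t)=\gamma\circ\nu(t)^{-1}$. As in the two earlier propositions this is a morphism $U\to\Bir(\p^2)$ (precomposition with the fixed element $\gamma$ and inversion being operations respecting families of birational maps), and $\rho(0)=\gamma\circ\nu(0)^{-1}=\gamma$ because $\nu(0)$ is the identity. It then remains to compute the degree of $\rho(t)$ for general $t$. The linear system of $\rho(t)=\gamma\circ\nu(t)^{-1}$ is the image under $\nu(t)$ of the linear system of $\gamma$, and the standard B\'ezout count (identical to the ones in the proofs of Propositions~\ref{prop:mi+mj=d-k} and~\ref{prop:5points}) gives
$$\deg\rho(t)=4d-\sum_{p}\big(\mathrm{mult}_p\,\nu(t)\big)\big(\mathrm{mult}_p\,\gamma\big),$$
the sum running over the base-points $p$ of $\nu(t)$. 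For general $t$ the point $p_6(t)$ is not a base-point of $\gamma$, so $\mathrm{mult}_{p_6(t)}\gamma=0$, and the sum reduces to $m_1+m_2+m_3+2m_4+2m_5=3d-k$. Hence $\deg\rho(t)=4d-(3d-k)=d+k$ for a general $t\neq 0$, which is what we want.

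The only point that needs any care is the one already implicit in the two earlier proofs: one must know that the B\'ezout bound displayed above is actually attained for general $t$, i.e.\ that no further unexpected drop of the degree occurs. This holds because $p_6(t)$ moves with $t$ while the base-locus of $\gamma$ is fixed, so that for general $t$ the base-loci of $\gamma$ and of $\nu(t)$ meet only along $p_1,\dots,p_5$ and with the expected intersection multiplicities there; apart from this, the verification that $\rho$ is a genuine morphism is copied verbatim from Propositions~\ref{prop:mi+mj=d-k} and~\ref{prop:5points}.
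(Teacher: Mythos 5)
Your proposal is correct and follows essentially the same route as the paper: apply Lemma~\ref{Lemm:QuarticDeg} to $p_1,\dots,p_5$, set $\rho(t)=\gamma\circ\nu(t)^{-1}$, and compute $\deg\rho(t)=4d-m_1-m_2-m_3-2m_4-2m_5=d+k$ for general $t$. The extra remarks on why the B\'ezout count is attained (the moving point $p_6(t)$ avoiding the base-locus of $\gamma$) only make explicit what the paper leaves implicit.
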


\begin{proof}
We use the morphism $\nu\colon U\to \Bir(\p^2)$ given by Lemma~\ref{Lemm:QuarticDeg} and define $\rho$ as $\rho(t)=\gamma\circ \nu(t)^{-1}$. By construction, this is a morphism which satisfies $\rho(0)=\gamma$. Moreover, the degree of the map $\rho(t)$ for a general $t$ is equal to $4d-m_1-m_2-m_3-2m_4-2m_5=d+k$.
\end{proof}
\begin{corollary}\label{Cor8421limit}
If $\varphi\in \Bir(\p^2)$ is a map of type $(d;m_1,\dots,m_r)$ with five proper base-points $p_1,p_2,p_3,p_4,p_5$ of multiplicity $m_1,\dots,m_5$ respectively, such that $p_1,p_2,p_3$ are collinear but no other triple of points belongs to the same line. If $m_1+m_2+m_3=d-1$ and $m_4+m_5=d$ then $\varphi\in \overline{\Bir(\p^2)_{d+1}}$.

In particular, there exist some elements in $\overline{\Bir(\p^2)_9}$ of type $(8;4^3,2^3,1^3)$ and elements in $\overline{\Bir(\p^2)_{11}}$ of type $(10;5^3,2^6).$
\end{corollary}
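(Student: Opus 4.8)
The first assertion is obtained by applying Proposition~\ref{prop:5points3collinear}. Under the hypotheses we have $m_1+m_2+m_3+2m_4+2m_5=(d-1)+2d=3d-1$, so the integer $k$ there equals $1$: there is a morphism $\rho\colon U\to\Bir(\p^2)$ with $U\subset\A^1$ open, $0\in U$, $\rho(0)=\varphi$, and $\rho(t)$ of degree $d+1$ on a dense open subset $U'\subseteq U$. Since $\rho$ is continuous and $0\in\overline{U'}=U$, we get $\varphi=\rho(0)\in\overline{\rho(U')}\subseteq\overline{\Bir(\p^2)_{d+1}}$, just as in the proof of Corollary~\ref{Coro:ddp}.

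For the last statement it remains to produce, for $\Lambda=(8;4^3,2^3,1^3)$ and for $\Lambda=(10;5^3,2^6)$, at least one birational map of type $\Lambda$ satisfying the hypotheses of the statement. The numerics force the multiplicities of the five distinguished points: for $(8;4^3,2^3,1^3)$ one must take $(m_1,\dots,m_5)=(4,2,1,4,4)$, since $d-1=7$ and $d=8$ can be written as a sum of three, resp.\ two, of the multiplicities $4,4,4,2,2,2,1,1,1$ only as $4+2+1$ and $4+4$; for $(10;5^3,2^6)$ one must take $(m_1,\dots,m_5)=(5,2,2,5,5)$, since $9$ and $10$ can be so written from $5,5,5,2,2,2,2,2,2$ only as $5+2+2$ and $5+5$. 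Both types are proper, as they appear in Table~\ref{tab:Hudson11}.

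So it suffices to realise each type by a map whose base-points are all proper, with $p_1,p_2,p_3$ (carrying the prescribed multiplicities) collinear and no other triple among $p_1,\dots,p_5$ collinear. I would do this following the proof of Proposition~\ref{Prop:HudsonW}: a map of type $(10;5^3,2^6)$ may be written as $\beta\circ\tau\sigma\tau^{-1}$, where $\sigma$ is the standard quadratic involution, $\tau\in\PGL(3,\k)$, and $\beta$ is a map of type $(5;2^6)$; its three multiplicity-$5$ points are $\tau([1:0:0]),\tau([0:1:0]),\tau([0:0:1])$, and its multiplicity-$2$ points are the images under $\tau\sigma\tau^{-1}$ of the six base-points of $\beta$. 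Choosing $\tau$ and $\beta$ generically subject to the single linear condition that $\tau([1:0:0])$ lie on the line through $\tau\sigma\tau^{-1}(q)$ and $\tau\sigma\tau^{-1}(q')$, for two base-points $q,q'$ of $\beta$, yields the desired $\varphi$, with $p_1=\tau([1:0:0])$ of multiplicity $5$ and $p_2,p_3$ the two multiplicity-$2$ points $\tau\sigma\tau^{-1}(q),\tau\sigma\tau^{-1}(q')$. The case $(8;4^3,2^3,1^3)$ is handled identically with $\beta$ of type $(4;2^3,1^3)$, the distinguished triple now consisting of $\tau([1:0:0])$ together with the images of one multiplicity-$2$ and one multiplicity-$1$ base-point of $\beta$. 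The only point needing care is that imposing this collinearity destroys neither properness nor the degree: this holds because $d-m_1-m_2-m_3=1>0$, so the line through $p_1,p_2,p_3$ is not a fixed component of the linear system, and for generic such data all base-points remain distinct and proper.
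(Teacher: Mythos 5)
Your proof is correct and follows the paper's own route: the first assertion is exactly the paper's application of Proposition~\ref{prop:5points3collinear} with $k=1$, and your choices $(4,2,1,4,4)$ and $(5,2,2,5,5)$ agree, up to ordering, with the paper's $(1,2,4,4,4)$ and $(2,2,5,5,5)$. You in fact go slightly further than the paper: the paper tacitly assumes the existence of maps of types $(8;4^3,2^3,1^3)$ and $(10;5^3,2^6)$ with only proper base-points realising the required collinearity (which does not follow directly from Proposition~\ref{Prop:HudsonW}, as that statement only covers general position), and your construction $\beta\circ\tau\sigma\tau^{-1}$ with $\beta$ of type $(5;2^6)$, resp.\ $(4;2^3,1^3)$, correctly supplies such maps, the remaining genericity checks being routine.
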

\begin{proof}The second part directly follows from Proposition~\ref{prop:5points3collinear}. The second part follows by taking $(m_1,\dots,m_5)$ to be respectively $(1,2,4,4,4)$ and $(2,2,5,5,5)$.\end{proof}

\section{Restrictions on the degeneration in one degree less} \label{Soneless}

\begin{proposition}\label{Prop:Degen1}
Let $\varphi\in \Bir(\p^2)_d$ be a birational map of degree $d\ge 2$ with only proper base-points $($but $\varphi^{-1}$ can have infinitely near base-points$)$, and assume that $\varphi$ belongs to the closure of $\Bir(\p^2)_{d+1}$.

Then, there exist a set $\Omega$ consisting of one, two, three or four base-points of $\varphi$, which are collinear and such that the sum of their multiplicities is equal to $d-1$.
\end{proposition}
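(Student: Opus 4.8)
The plan is to lift the degeneration to the level of linear systems and use the multiplicativity of the Jacobian together with Lemma~\ref{Lemm:DecompJac}. Since $\varphi$ belongs to $\overline{\Bir(\p^2)_{d+1}}$, by Corollary~\ref{Coro:ClosureBird0} the element $f=[f_0:f_1:f_2]\in \Bir_d^\circ$ representing $\varphi$ lies in $\pi_d(\overline{\Bir_{d+1}^\circ})$; more precisely, there is an irreducible curve in $\overline{\Bir_{d+1}^\circ}$ through a point mapping to $\varphi$, hence a morphism $\rho\colon C\to \overline{\Bir_{d+1}^\circ}$ from a smooth affine curve with $\rho(0)$ corresponding to $\varphi$ and $\rho(t)$ of degree exactly $d+1$ for general $t$. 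Write $\rho(t)=[F_0(t):F_1(t):F_2(t)]$ with $F_i$ homogeneous of degree $d+1$ in $x,y,z$ and regular in $t$, arranged so that $\rho(0)=[G\cdot f_0:G\cdot f_1:G\cdot f_2]$ for a single linear form $G=\ell$ (the common factor has degree $1$ since the degree drops by exactly one). So the linear system of $\rho(t)$ degenerates, at $t=0$, to $\ell$ plus the linear system of $\varphi$.

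The heart of the argument is to identify $\ell$ geometrically. By Proposition~\ref{prop:BCM}, for general $t$ the map $\rho(t)$ has some fixed proper homaloidal type $\Lambda'=(d+1;n_1,\dots,n_s)$ and no infinitely near base-points for $\rho(t)$ itself; its base-points $p_i(t)$ move algebraically with $t$. I would analyse the Jacobian: $J(\rho(t))$ has degree $3(d+1-1)=3d$, and $J(\rho(0))$ contains the factor $\ell^{?}$ coming from the common factor together with $J(f)$ of degree $3(d-1)$. A direct computation with the chain rule shows that if $F_i=\ell\cdot f_i$ at $t=0$, then $J(\rho(0))=\ell^3\cdot(\text{something})$ modulo lower-order corrections, forcing $\ell^3$ — but degree bookkeeping ($3d=3(d-1)+3$) shows the extra degree is exactly $3$, so $J(\rho(0))=\ell^3\cdot J(f)$ up to scalar. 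Now take limits of the base-points of $\rho(t)$: the $s$ points $p_i(t)$ with multiplicities $n_i$ specialise to base-points of $\varphi$ together with, possibly, infinitely near points and points on $\ell$; since $\varphi$ has only proper base-points, the specialisation must "collide" along $\ell$. Comparing $\sum n_i=3(d+1-1)=3d$ with $\sum m_j=3(d-1)$ shows the total multiplicity that must concentrate on $\ell$ (counting limits of base-points of $\rho(t)$ landing on $\ell$ or becoming infinitely near along it) is $3$; by Noether's inequality bounding multiplicities of base-points of a degree-$(d+1)$ map on a single line (Bézout against $\ell$ gives that at most ... points of multiplicity summing to at most $d+1$ lie on a line, but the relevant bound for the \emph{colliding} points is much smaller), these collapsing points are at most four, and the multiplicities they contribute to $\varphi$ at their limit positions on $\ell$ sum to $d-1$. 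This is the set $\Omega$.

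The main obstacle I expect is controlling the limit configuration of base-points precisely: showing that the base-points of $\rho(t)$ that "escape" into the common factor $\ell$ are at most four and that their limits are genuine base-points of $\varphi$ (proper, collinear on $\ell$) with multiplicity sum exactly $d-1$, rather than some messier infinitely-near degeneration. The clean way to do this is via the characteristic matrix / $W$-action: $\rho(t)$ corresponds to an element $g(t)\in W$, constant for general $t$, and the degeneration $\rho(t)\to \rho(0)$ forces $g=g(t)$ to factor through a reflection or a small product of reflections $\sigma_0$ recording exactly which base-points merge onto $\ell$. Matching this factorisation against the homaloidal type of $\varphi$ and using that $d-1=n_{i_1}+\dots+n_{i_k}$ for the indices $i_j$ of the colliding points (the coefficient of $e_0$ drops by $1$ precisely when $\sum_{\text{colliding}} n_i = d$ in the pre-image picture, i.e.\ $d-1$ in $\varphi$'s) pins down $|\Omega|\le 4$ and collinearity. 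I would invoke Lemma~\ref{Lemm:DecompJac} applied to $\rho(t)^{-1}$ to make the "factor of the Jacobian equals product over base-points of the inverse" step rigorous, and a semicontinuity argument on $\Bir_{\Lambda'}$ (Proposition~\ref{Prop:BirLclosed}) to pass to the limit.
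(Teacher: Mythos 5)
Your proposal assembles several of the right ingredients (the limit representative acquires a linear common factor, the Jacobian of the limit picks up that factor cubed, Noether's equalities should govern the count, and Proposition~\ref{Prop:BirLclosed} is the tool that makes the limit data usable), but the three statements that actually constitute the proposition --- at most four points, multiplicity sum exactly $d-1$, and collinearity --- are precisely the steps you leave to vague mechanisms, and those mechanisms would not deliver them. Tracking ``limits of the base-points of $\rho(t)$'' is exactly the messy infinitely-near analysis one wants to avoid, and nothing you say rules it out; the appeal to the characteristic matrix $g(t)\in W$ ``factoring through a reflection'' is not an argument: the element of $W$ is constant in $t$ and does not record how the base-point configuration degenerates, so no factorisation in $W$ is forced by the specialisation. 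Likewise, the linear Noether equality only tells you the total multiplicity drops by $3$; it does not by itself bound the number of affected points, nor give the sum $d-1$, nor say anything about collinearity.

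The paper's proof replaces all of this by a two-sided, purely algebraic argument that you never set up. One applies the degeneration to \emph{both} $\varphi$ and $\varphi^{-1}$ (Lemma~\ref{Lemm:InverseHomeo}): writing $f,g\in\Bir_d^{\circ}$ for $\varphi^{-1},\varphi$, one gets $\hat f=[\alpha f_0:\alpha f_1:\alpha f_2]$ and $\hat g=[\beta g_0:\beta g_1:\beta g_2]$ in $\overline{\Bir_{d+1}^{\circ}}$ with $\alpha,\beta$ linear. Since $\Bir_\Lambda$ is closed (Proposition~\ref{Prop:BirLclosed}), $\hat f\in\Bir_\Lambda$ for some type with $\Lambda^*=(d+1;n_1,\dots,n_r)$, so $J(\hat f)=\alpha^3 J(f)=\prod \hat p_i$ with each $\hat p_i$ contracted by $\hat f$ onto a point $q_i$ that is a base-point of $\hat g$ of multiplicity at least $n_i$. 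Because $f$ has no common factor, each $\hat p_i$ is $\alpha$, $\alpha p_i$ or $p_i$ with $p_i$ contracted by $f$; since $\varphi$ has only proper base-points, $\deg p_i=m_i$ is the multiplicity of $q_i$ as a base-point of $\varphi$, so $n_i-m_i\in\{0,1\}$ and the difference of the two quadratic Noether equalities, $2d+1=\sum n_i^2-\sum m_i^2$, forces exactly three increments (or, in the separate case where $\alpha$ divides $J(f)$, a short case analysis yielding up to four points), whence $\sum_{\Omega} m_i=d-1$. Collinearity is then immediate and has nothing to do with colliding points: each such $q_i$ has multiplicity $m_i$ for $g$ but at least $m_i+1$ for $\hat g=[\beta g_0:\beta g_1:\beta g_2]$, so it must lie on the line $\beta=0$. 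Without this comparison between $g$ and $\hat g$ (i.e.\ without using the degeneration of $\varphi$ itself alongside the Jacobian decomposition coming from $\varphi^{-1}$), your sketch has no source for the collinearity or for the bound on $|\Omega|$, so as it stands the proof has a genuine gap.
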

\begin{proof}
Suppose that $\varphi$ belongs to the closure of $\Bir(\p^2)_{d+1}$, which is equivalent to the fact that $\varphi^{-1}$ belongs to the closure of $\Bir(\p^2)_{d+1}$.

By Corollary~\ref{Coro:ClosureBird0}, there exist elements $\hat{f},\hat{g}\in {\overline{\Bir_{d+1}^{\circ}}}$ which are sent by $\pi_{d+1}$ onto $\varphi^{-1}$ and $\varphi$ respectively. Denoting by $f=[f_0:f_1:f_2]\in \Bir_d^{\circ}$ and $g=[g_0:g_1:g_2]\in \Bir_d^{\circ}$ the elements corresponding to $\varphi^{-1}$ and $\varphi$ respectively, there exists thus homogeneous polynomials $\alpha,\beta\in \k[x_0,x_1,x_2]$  of degree $1$, such that $\hat{f}=[\alpha f_0:\alpha f_1:\alpha f_2]$ and $\hat{g}=[\beta g_0:\beta g_1:\beta g_2]$ belong to $\overline{\Bir_{d+1}^{\circ}}$. 
By Proposition~\ref{Prop:BirLclosed}, there is a homaloidal type $\Lambda$ such that $\hat{f}\in \Bir_\Lambda$. 

Changing maybe $\beta$ and writing   $\Lambda^{*}=(d+1;n_1,\dots,n_r)$, this yields the existence (see Definition~\ref{Def:BirLambda}) of homogeneous polynomials $\hat{p}_1,\dots,\hat{p}_r$ of degree $n_1,\dots,n_r$ respectively, each of them contracted by $\hat{f}$ onto points $q_1,\dots,q_r\in \p^2$, being base-points of $\hat{g}$ of multiplicity at least $n_1,\dots,n_r$, and such that  $J(\hat{f})=\prod\limits_{i=1}^r \hat{p}_i$.

For each $i$, the fact that $\hat{p}_i$ is contracted by $\hat{f}=[\alpha f_0:\alpha f_1:\alpha f_2]$, and that $f=[f_0:f_1:f_2]$ is without common component imply that one of the following holds:
\begin{enumerate}
\item
$\hat{p}_i=\alpha$ and $q_i$ is any point of $\p^2$;
\item
$\hat{p}_i=\alpha p_i$, where $p_i$ is a polynomial contracted by $f$ onto $q_i$;
\item
$\hat{p}_i=p_i$, where $p_i$ is a polynomial contracted by $f$ onto $q_i$.
\end{enumerate}
Because $\varphi$ has only proper base-points, if a polynomial $p$ of degree $k$ is contracted by $f$ onto a point $q$, then $q$ is a base-point of $\varphi$, and thus of $g$, of multiplicity $k$.

The polynomials $p_i$ defined in $(2)$, $(3)$ above are thus irreductible factors of the Jacobian $J(f)$. Note that $\prod\limits_{i=1}^r \hat{p}_i=J(\hat{f})=J(f)\alpha^3$, so each polynomial contracted by $f$ appears exactly once in this decomposition, except if this polynomial is $\alpha$ itself.

$(a)$ We assume first that $\alpha$ is not a divisor of $J(f)$, which is the easiest case. We write $p_i=1$ in the case where $\hat{p}_i=\alpha$, and obtain then $\prod_{i=1}^r p_i=J(f)$. For each $i$, we denote by $m_i$ the degree of $p_i$, and obtain $m_i\in \{n_i,n_i-1\}$, and obtain, via Noether equalities
 $$\sum_{i=1}^r n_i=3d+6=3+ \sum_{i=1}^r m_i,$$
 $$\sum_{i=1}^r (n_i)^2=(d+1)^2-1=(2d+1)+d^2-1=(2d+1)+\sum_{i=1}^r (m_i)^2.$$
There are then exactly three values of $i$ such that $n_i=m_i+1$. Reordering such that the three indices are $1,2,3$, we obtain
$$2d+1=\sum_{i=1}^r (n_i)^2-\sum_{i=1}^r (m_i)^2=(2m_1+1)+(2m_2+1)+(2m_3+1)$$
which yields $m_1+m_2+m_3=d-1$. 

If $i\in \{1,2,3\}$ is such that $m_i>0$, then $\hat{p}_i=\alpha p_i$ and $p_i$ is contracted by $f$ onto $q_i$ (case $(2)$ above). Moreover, $q_i$ is a base-point of multiplicity $m_i$ of $g$ and of multiplicy $m_i+1$ of $\hat{g}$. This implies that $q_i$ belongs to the line given by $\beta=0$. Choosing $$\Omega=\{q_i \mid i\in \{1,2,3\} \mbox{ and }m_i>0\},$$ we obtain the result.

$(b)$ Assume now that $\alpha$ is a divisor of $J(f)$. Each other irreducible factor of $J(f)$ is then equal to exactly one $p_i$, and $\alpha$ appears four times in $J(f)\alpha^3=J(\hat{f})=\prod_{i=1}^r \hat{p}_i$. 

If $\hat{p}_i$ is equal to $\alpha$ or to $\alpha^2$ for some $i$, we will choose that $p_i=1$. In all other cases, the polynomial $p_i$ is defined as before. This implies that $J(f)=\alpha\prod_{i=1}^r p_i$. As before, we denote by $m_i$ the degree of $p_i$ and the Noether equalities yield $1+\sum_{i=1}^r m_i=3d-3$ and $1+\sum_{i=1}^r (m_i)^2=d^2-1.$

We consider now the following possibilities, which describe which one of the $\hat{p}_i$ are multiple of $\alpha$.

$(i)$ Suppose that $\alpha^2$ is equal to two different $\hat{p}_i$, that we can choose to be $\hat{p}_1$ and $\hat{p}_2$. We have then $n_1=n_2=2$, $m_1=m_2=0$ and $m_i=n_i$ for $i\ge 3$. Then $2d+1=\sum (n_i)^2-(\sum (m_i)^2+1)=3$, which is not possible since $d\ge 2$.

$(ii)$ Suppose that $\hat{p}_1=\alpha^2$ and that $\alpha$ divides two other $\hat{p}_i$, that we can assume to be  $\hat{p}_2$ and $\hat{p}_3$. We have then $n_1=2$, $m_1=0$, $n_2=m_2+1$, $n_3=m_2+1$ and $n_i=m_i$ for $i\ge 3$. Then $2d+1=\sum (n_i)^2-(\sum (m_i)^2+1)=1+(2m_1+1)+(2m_2+1)$, which yields $m_1+m_2=d-1$. We can conclude as before: if $m_i>0$ with $i\in \{2,3\}$, then $p_i$ is contracted onto $q_i$, which is a base-point of $\varphi$ of multiplicity $m_i$.

$(iii)$ The last case is when $\alpha^2$ does not divide any of the $\hat{p}_i$. There are thus exactly four values of $i$ such that $\alpha$ divides $\hat{p}_i$. We can choose that these are $1,2,3,4$, and obtain $\hat{p}_i=\alpha p_i$ for $i=1,2,3,4$, and $\hat{p}_i=p_i$ for $i>4$. So $n_i=m_i+1$ for $i\le 4$ and $n_i=m_i$ for $i>4$. In particular, we obtain $2d+1=\sum (n_i)^2-(\sum (m_i)^2+1)=(2m_1+1)+(2m_2+1)+(2m_2+1)+(2m_3+1)-1$, which yields $m_1+m_2+m_3+m_4=d-1$.
\end{proof}

\begin{corollary}\label{Coro:GeneralLimitdplus1}
Let $\varphi\in \Bir(\p^2)_d$ be a birational map of degree $d\ge 2$ with only proper base-points $($but $\varphi^{-1}$ can have infinitely near base-points$)$, such that no three are collinear.

Then, the following conditions are equivalent:

\begin{enumerate}[$(1)$]
\item
The map $\varphi$ belongs to the closure of $\Bir(\p^2)_{d+1}$.
\item
There exist a set $\Omega$ consisting of one or two base-points of $\varphi$ such that the sum of their multiplicities is equal to $d-1$.
\end{enumerate}
\end{corollary}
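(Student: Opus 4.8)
The implication $(2)\Rightarrow(1)$ is essentially already available: if $\Omega=\{p_1\}$ with $m_1=d-1$, then $\varphi$ is of de Jonqui\`eres type and we may take a second point $p_2$ (proper, chosen generically, in the first neighbourhood of $p_1$ or distinct from $p_1$) with $m_2=0$, so $m_1+m_2=d-1$ and Proposition~\ref{prop:mi+mj=d-k} (with $k=1$) gives a family $\rho\colon\A^1\to\Bir(\p^2)$ with $\rho(0)=\varphi$ and $\rho(t)$ of degree $d+1$ for general $t$; hence $\varphi\in\overline{\Bir(\p^2)_{d+1}}$. If $\Omega=\{p_1,p_2\}$ with $m_1+m_2=d-1$, then both points are proper by hypothesis, $p_1$ is a proper point of $\p^2$ and $p_2$ is a (distinct) proper point, so Proposition~\ref{prop:mi+mj=d-k} applies directly with $k=1$ and again $\varphi\in\overline{\Bir(\p^2)_{d+1}}$. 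In both cases one passes from the statement about a single map to the fact that $\varphi$ itself lies in the closure, exactly as in the proof of Corollary~\ref{Coro:ddp}.

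For $(1)\Rightarrow(2)$ the plan is to invoke Proposition~\ref{Prop:Degen1} and then use the hypothesis that no three base-points of $\varphi$ are collinear to cut down the size of the set $\Omega$ it produces. Proposition~\ref{Prop:Degen1} gives a set $\Omega$ of one, two, three or four base-points of $\varphi$, all collinear, whose multiplicities sum to $d-1$. Since no three base-points of $\varphi$ lie on a line, a collinear subset of the base-points has at most two elements; thus $|\Omega|\le 2$ automatically, and the sum of the multiplicities of the points of $\Omega$ is $d-1$. This is precisely condition $(2)$, so the implication follows immediately once Proposition~\ref{Prop:Degen1} is applied.

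The only subtlety worth spelling out is the density/irreducible-component argument needed to upgrade ``a specific map belongs to the closure'' to the cleaner statement, and conversely to make sure the two points $p_1,p_2$ used in Proposition~\ref{prop:mi+mj=d-k} can be taken proper: this is where one repeats the reasoning of Corollary~\ref{Coro:ddp}, using Proposition~\ref{prop:BCM} to know that maps with the base-points in sufficiently general (proper, non-degenerate) position are dense in each irreducible component $\Bir_\Lambda^{\circ}$. I do not expect any genuine obstacle here — the content is entirely in Proposition~\ref{Prop:Degen1} and Proposition~\ref{prop:mi+mj=d-k}, which are already proved; the corollary is the bookkeeping that combines them with the ``no three collinear'' hypothesis.
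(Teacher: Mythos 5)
Your argument is correct and is essentially the paper's own proof, which simply invokes Proposition~\ref{Prop:Degen1} for $(1)\Rightarrow(2)$ (the no-three-collinear hypothesis forcing $|\Omega|\le 2$) and Proposition~\ref{prop:mi+mj=d-k} for $(2)\Rightarrow(1)$. The density discussion in your last paragraph is unnecessary: since $\varphi$ itself has only proper base-points by hypothesis, Proposition~\ref{prop:mi+mj=d-k} applies to $\varphi$ directly, adding a point of multiplicity $0$ when $|\Omega|=1$ exactly as in Corollary~\ref{Coro:ddp}.
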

\begin{proof}
The implication $(1)\Rightarrow (2)$ is given by Proposition~\ref{Prop:Degen1}. The implication $(2)\Rightarrow (1)$ is given by Proposition~\ref{prop:mi+mj=d-k}.
\end{proof}

\begin{proposition} \label{d+1->d}
Let $\Lambda=(d;m_1,m_2,\ldots,m_r)$ be a proper homaloidal type.

The irreducible component $\pi_d(\Bir_\Lambda^\circ)$ of $\Bir(\PP^2)_d$ lies in the closure of $\Bir(\PP^2)_{d+1}$ if and only if there exists $m_i$ and $m_j$ such that $m_i+m_j=d-1$ or $m_i=d-1$.
\end{proposition}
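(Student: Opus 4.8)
The plan is to establish the two directions separately, noting that the ``if'' part is essentially immediate from the existence results already proved, while the ``only if'' part requires reducing to the case covered by Proposition~\ref{Prop:Degen1}.

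For the ``if'' direction, suppose there are multiplicities $m_i,m_j$ with $m_i+m_j=d-1$ (the case $m_i=d-1$ being the subcase $m_j=0$, where we add a general point of multiplicity $0$). By Proposition~\ref{Prop:HudsonW}(ii) there is a dense open $U\subset(\p^2)^r$ such that every point-configuration in $U$ is the base-point set of some birational map of type $\Lambda$; in particular we may pick such a $\varphi$ whose base-points $p_i$ and $p_j$ are both \emph{proper} points of $\p^2$, and moreover (shrinking $U$) such that $\varphi$ has only proper base-points and no three of them collinear, so $\varphi$ is a general element of $\pi_d(\Bir_\Lambda^\circ)$. Then Proposition~\ref{prop:mi+mj=d-k} (applied with $k=1$, using that $p_i$ is a proper point and $p_j$ is a proper point) produces a morphism $\rho\colon\A^1\to\Bir(\p^2)$ with $\rho(0)=\varphi$ and $\deg\rho(t)=d+1$ for general $t$, so $\varphi\in\overline{\Bir(\p^2)_{d+1}}$. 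Since $\pi_d(\Bir_\Lambda^\circ)$ is irreducible and its general element lies in the closed set $\overline{\Bir(\p^2)_{d+1}}\cap\Bir(\p^2)_{\le d}$ intersected appropriately, the whole component $\pi_d(\Bir_\Lambda^\circ)$ lies in $\overline{\Bir(\p^2)_{d+1}}$; here one uses Lemma~\ref{lem:WHalgebraic} and Corollary~\ref{Coro:ClosureBird0} to pass from the dense subset to the full component, exactly as in the proof of Corollary~\ref{Coro:ddp}.

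For the ``only if'' direction, assume $\pi_d(\Bir_\Lambda^\circ)\subset\overline{\Bir(\p^2)_{d+1}}$. Pick a \emph{general} element $\varphi$ of $\pi_d(\Bir_\Lambda^\circ)$: by Proposition~\ref{prop:BCM}(1), $\varphi$ has homaloidal type $\Lambda$, has only proper base-points, and $\varphi^{-1}$ has only proper base-points as well (in particular $\varphi$ meets the hypotheses of Proposition~\ref{Prop:Degen1}). Since $\varphi\in\overline{\Bir(\p^2)_{d+1}}$, Proposition~\ref{Prop:Degen1} gives a set $\Omega$ of one, two, three or four base-points of $\varphi$ that are collinear and whose multiplicities sum to $d-1$. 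The key additional input needed is that for a \emph{general} such $\varphi$ no three base-points are collinear: this follows from Proposition~\ref{Prop:HudsonW}(ii), which lets us choose the $r$ base-points in the dense open locus where no three are collinear. Hence $|\Omega|\le 2$, and the multiplicities of the (at most two) points in $\Omega$ are among the $m_i$; this says precisely that there exist $m_i,m_j$ with $m_i+m_j=d-1$, or (if $|\Omega|=1$) a single $m_i=d-1$.

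The main obstacle is the ``only if'' direction, and specifically justifying that one may test the closure condition on a \emph{general} member of the component with no three base-points collinear: one must check that ``no three base-points collinear'' is an open dense condition inside $\Bir_\Lambda^\circ$ (equivalently, that the locus in $(\p^2)^r$ of configurations realizing type $\Lambda$ with three collinear base-points is a proper closed subset), which is exactly what Proposition~\ref{Prop:HudsonW}(ii) furnishes, together with the fact from Proposition~\ref{prop:BCM} that $\Bir_\Lambda^\circ$ is irreducible with general element of type $\Lambda$ having only proper base-points. Given that, invoking Proposition~\ref{Prop:Degen1} to bound $|\Omega|$ and translating collinearity-of-$\Omega$ plus $|\Omega|\le 2$ into the numerical statement about $m_i+m_j$ is routine.
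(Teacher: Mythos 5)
Your proposal is correct and follows essentially the same route as the paper: the paper's proof simply cites Corollary~\ref{Coro:GeneralLimitdplus1} in both directions, and your argument just unpacks that corollary, using Proposition~\ref{Prop:Degen1} together with the no-three-collinear generality for necessity and Proposition~\ref{prop:mi+mj=d-k} plus density of elements with proper base-points (Propositions~\ref{Prop:HudsonW} and~\ref{prop:BCM}) for sufficiency. The only cosmetic remark is that for the ``if'' direction the no-three-collinear condition is superfluous, since Proposition~\ref{prop:mi+mj=d-k} needs only that the two relevant base-points be proper, which already holds on a dense subset of the component by Proposition~\ref{prop:BCM}.
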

\begin{proof}
The necessity of the condition on the multiplicities is given by Corollary~\ref{Coro:GeneralLimitdplus1}. 

Conversely, suppose that there exists $m_i$ and $m_j$ such that $m_i+m_j=d-1$ or   $m_i=d-1$. By Corollary~\ref{Coro:GeneralLimitdplus1},  a general element of $\pi_d(\Bir_\Lambda^\circ)$  is in the closure of $\Bir(\p^2)_{d+1}$. This gives the result.\end{proof}

\begin{remark}
The product of three general quadratic transformations is a general map of type $(8;4^3,2^3,1^3)$ that does not belong to $\overline{\Bir(\p^2)_9}$. However, there are some maps $\varphi\in \Bir(\p^2)$ of type $(8;4^3,2^3,1^3)$ that belong to $\overline{\Bir(\p^2)_9}$ (Corollary~\ref{Cor8421limit}). The same phenomenon occurs for $(10;5^3,2^6)$.
\end{remark}

Proposition~\ref{d+1->d} implies the following:

\begin{corollary}\label{Coro:Closureddp}
One has that $\Bir(\PP^2)_d\subset\overline{\Bir(\PP^2)_{d+1}}$ if and only if $d \in \{ 1,2,\ldots,6,7,9,11 \}$.
\end{corollary}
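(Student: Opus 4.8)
The plan is to combine Proposition~\ref{d+1->d} with Example~\ref{Exa:Types} and Example~\ref{ex3m} to check, degree by degree, whether \emph{every} proper homaloidal type of degree $d$ contains two multiplicities summing to $d-1$ (allowing $m_j=0$, i.e. $m_i=d-1$ alone). By Proposition~\ref{d+1->d}, $\Bir(\PP^2)_d\subset\overline{\Bir(\PP^2)_{d+1}}$ holds precisely when this numerical condition is satisfied by \emph{all} proper types of degree $d$, since $\Bir(\PP^2)_d$ is the union of the components $\pi_d(\Bir_\Lambda^\circ)$ over the proper types $\Lambda$ of degree $d$ (Proposition~\ref{prop:BCM}).

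First I would dispose of the ``if'' direction. For $d\in\{1,2,\dots,7\}$, inspecting Table~\ref{tab:Hudson11} together with the two infinite families $(d;d-1,1^{2d-2})$ and $(d;d-2,2^{d-2},1^3)$ shows that each type has either a multiplicity equal to $d-1$ or two multiplicities summing to $d-1$: the de Jonqui\`eres type gives $m_1=d-1$; the type $(d;d-2,2^{d-2},1^3)$ gives $m_1+1=d-1$ for $d\ge 4$; and one checks the finitely many sporadic types $(5;2^6)$, the two types in degree $6$, and the three in degree $7$ by hand (e.g. $2+3=5-1$, $3+2=6-1$, $3+3=7-1$, etc.). For $d=9$: each of the eight types in the degree-$9$ row of Table~\ref{tab:Hudson11}, plus the two families, contains such a pair (for instance $(9;6,3^4,2,1^4)$ has $6+2=8$, $(9;5,4^3,1^7)$ has $4+4=8$, $(9;4^4,2^4)$ has $4+4=8$, $(9;4^3,3^3,2,1)$ has $4+4=8$, and so on). For $d=11$: likewise each of the seventeen types in the degree-$11$ row, plus the two families, admits a pair summing to $10$ (e.g. $(11;8,3^5,2^2,1^3)$: $8+2$; $(11;7,4^3,3^2,1^5)$: $7+3$; $(11;6,5^3,1^9)$: $6+4$; $(11;5^4,2^5)$: $5+5$; etc.). This is a finite verification.

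For the ``only if'' direction I must exhibit, for every $d\notin\{1,\dots,7,9,11\}$, at least one proper homaloidal type of degree $d$ with \emph{no} two multiplicities summing to $d-1$ and no multiplicity equal to $d-1$. For $d=8$, the type $(8;4^3,2^3,1^3)$ works: the possible sums of two multiplicities are $8,6,5,4,3,2$, none equal to $7$, and no multiplicity is $7$. For $d=10$, the type $(10;5^3,2^6)$ works: sums are $10,7,4$, none equal to $9$. For $d\ge 12$ I would use the three families of Example~\ref{ex3m}: writing $d=3m$, $d=3m+1$, or $d=3m+2$ with $m\ge 4$, the types \eqref{3m}, \eqref{3m+1}, \eqref{3m+2} have largest multiplicity $d-6$, $d-5$, $d-4$ respectively (all $<d-1$), and their remaining multiplicities are drawn from $\{6,4,3,2,1\}$; the possible pairwise sums are $d-6+6=d$, $d-6+4=d-2$, $\dots$, $6+6=12$, $\dots$, and a direct inspection shows none of them equals $d-1$ once $d\ge 13$ (one checks $d-6+\{6,4,3,2,1\}$, $d-5+\cdots$, $d-4+\cdots$, and the small sums among $\{6,4,3,2,1\}$ avoid $d-1$ for $d\ge 13$), while for $d=12$ one takes $m=4$ in \eqref{3m}, giving $(12;6,6,4^3,3^2,2,1)$, whose pairwise sums are $12,10,9{?}$ — here one must be slightly careful: $6+... $ — actually $6+$ nothing gives $11$, and $4+...$ gives at most $8$, so $11$ is not attained; it works. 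Combining, every such $d$ has a ``bad'' type, so by Proposition~\ref{d+1->d} the component it indexes is not in $\overline{\Bir(\PP^2)_{d+1}}$, hence $\Bir(\PP^2)_d\not\subset\overline{\Bir(\PP^2)_{d+1}}$.

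The main obstacle is the ``only if'' direction for general large $d$: one must be sure that the chosen family of types genuinely has no pair of multiplicities summing to $d-1$, and this requires checking the finitely many pairwise sums $\{(d-c)+e : e\in\{6,4,3,2,1\}\}\cup\{e+e' : e,e'\in\{6,4,3,2,1\}\}$ against the target $d-1$, where $c\in\{6,5,4\}$; the small-degree edge cases $d=12,13,14$ need to be inspected individually because there the ``large'' multiplicity $d-c$ is comparable to the bounded multiplicities and a coincidence with $d-1$ could in principle occur. Once the arithmetic is organised this way, the argument is a clean finite check, and the statement follows. (The ``if'' half is entirely bookkeeping against Table~\ref{tab:Hudson11}.)
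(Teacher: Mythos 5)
Your strategy is essentially the paper's: reduce everything to the numerical criterion of Proposition~\ref{d+1->d} (the paper handles the ``if'' half via Corollary~\ref{Coro:ddp} and Lemma~\ref{Lem:MultiplicitiesDminus1}, which amounts to the same inspection of Table~\ref{tab:Hudson11} and the two standard families), and then exhibit a ``bad'' proper type in each degree $d\in\{8,10\}\cup\{d\ge 12\}$ using $(8;4^3,2^3,1^3)$, $(10;5^3,2^6)$ and the three families of Example~\ref{ex3m}. The ``if'' half of your check is fine (small slips such as quoting $6+4$ for $(11;6,5^3,1^9)$, where one should use $5+5=10$, do not affect anything, and $d=1$ is a trivial separate case via Proposition~\ref{prop:mi+mj=d-k} with $m_1=m_2=0$).

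There is, however, a genuine gap in the ``only if'' direction at $d=13$, exactly at the edge case you flagged but did not actually carry out. For $d=13=3m+1$ with $m=4$, the type \eqref{3m+1} is $(13;7,6^2,4,3^3,1^4)$: it contains \emph{two} points of multiplicity $6$, and $6+6=12=d-1$, so your assertion that ``the small sums among $\{6,4,3,2,1\}$ avoid $d-1$ for $d\ge 13$'' fails precisely there. By Proposition~\ref{d+1->d} the corresponding component \emph{is} contained in $\overline{\Bir(\p^2)_{14}}$, so this type cannot serve as a witness for $\Bir(\p^2)_{13}\not\subset\overline{\Bir(\p^2)_{14}}$. (A smaller slip: in all three families the largest multiplicity is $d-6$, not $d-6,d-5,d-4$; this is harmless, since $(d-6)+e=d-1$ would need $e=5$, which never occurs.) The statement remains true at $d=13$, but one must substitute another witness, e.g.\ $(13;7,6,4^4,3^2,1)$: one step of Hudson's test transforms it into $(9;4^3,3^3,2,1)$, which appears in Table~\ref{tab:Hudson11}, so it is proper, and no multiplicity equals $12$ and no two of its multiplicities sum to $12$. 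It is only fair to add that the paper's own proof states the same unqualified claim about the families of Example~\ref{ex3m} for all $m\ge 4$ and thus overlooks the same coincidence at $d=13$; the substitution above repairs both arguments, and apart from this single degree your verification is correct.
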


\begin{proof}
The ``if'' part follows from Corollary \ref{Coro:ddp}.
When $d=8$, there are exactly two proper homaloidal types, namely $(8;4^3,2^3,1^3)$ and $(8;3^7)$, cf.\ Table~\ref{tab:Hudson11}, whose corresponding irreducible components of $\Bir(\p^2)_8$ are not contained in  $\overline{\Bir(\p^2)_{9}}$ by Proposition~\ref{d+1->d}.
Similarly, when $d=10$, there are exactly 7 proper homaloidal types, cf.\ Table~\ref{tab:Hudson11}, whose corresponding irreducible components of $\Bir(\p^2)_{10}$ are not contained in  $\overline{\Bir(\p^2)_{11}}$.

It remains to see that $\Bir(\p^2)_d \not\subset \overline{\Bir(\p^2)_{d+1}}$ for each $d\geq12$. To do this, we use the proper homaloidal types $(3m;3m-6,6^{m-3},4^3,3^2,2,1)$ for $m\ge 4$, $(3m+1;3m-5,6^{m-2},4,3^3,1^4)$ for $m\ge 4$ and  $(3m+2;3m-4,6^{m-2},4^2,3^2,2^2,1 )$ for $m\ge 4$ (see Example~\ref{ex3m}). For each of these types of degree $d\in \{3m,3m+1,3m+2\}$, there are no two multiplicities $m_i$ and $m_j$ with $m_i+m_j=d-1$, hence  Proposition~\ref{d+1->d} says that the corresponding irreducible components of  $\Bir(\p^2)_d$ are not contained in $\overline{\Bir(\p^2)_{d+1}}$.
\end{proof}
We can now give the proof of Theorem~\ref{Thm:ClosureBirD}:
\begin{proof}[Proof of Theorem~$\ref{Thm:ClosureBirD}$]
It follows from Corollary~\ref{Coro:Closureddp} that $\overline{\Bir(\p^2)_{d}}=\Bir(\p^2)_{\le d}$ for $d\le 8$, that $\overline{\Bir(\p^2)_{d}}\not=\Bir(\p^2)_{\le d}$ for $d\in \{9,11\}$ and $d\ge 13$, and that $\Bir(\p^2)_{d-1}\subset \overline{\Bir(\p^2)_{d}}$ for $d\in \{10,12\}$.

The inclusions $\Bir(\p^2)_8\subset \overline{\Bir(\p^2)_{10}}$ and $\Bir(\p^2)_{10}\subset \overline{\Bir(\p^2)_{12}}$, given by Corollaries~\ref{Coro:ddp} and \ref{Coro:101112}, conclude the proof that $\Bir(\p^2)_{\le d}=\overline{\Bir(\p^2)_{d}}$ for each $d\in\{10,12\}$.
\end{proof}
\subsection{Examples}
The following example shows that it is possible that $[f_0h:f_1h:f_2h]\in \overline{\Bir_{d+m}^{\circ}}$ corresponds to a birational map $[f_0:f_1:f_2]\in \Bir_d^{\circ}$ that contracts the curve given by $h=0$.
\begin{example}
Let $\tilde{\kappa}\colon \A^1\to \Bir(\p^2)$ be given by 
$$\tilde{\kappa}(t)\colon [x:y:z]\dashmapsto [t(x^2-y^2)-xz: -yz: (t(x+y)-z)(t(x-y)-z)].$$
For each $t\not=0$, $\tilde{\kappa}(t)$ is a quadratic birational involution, whose three base-points are 
$$[1:- 1: 0], [1:1: 0], [1: 0: t],$$
and $\tilde\kappa(0)$ is the automorphism 
$$[x:y:z]\mapsto [-xz: -yz: z^2]=[-x:-y:z].$$
We now consider $\kappa\colon \A^1\to \Bir(\p^2)$ be the morphism that is given by $\kappa(t)=\tilde\kappa(t)\circ \sigma$, where $\sigma\colon [x:y:z]\dashmapsto [yz:xz:xy]$ is the standard quadratic involution of $\p^2$:
$$\kappa(t)\colon [x:y:z]\dashmapsto [(tz^2(y^2-x^2)-xy^2z): -x^2yz: (tz(x+y)-xy)(tz(y-x)-xy)].$$
For $t\not=0$, the linear system of the birational map $\kappa(t)$ consists of quartics having multiplicity $2$ at $[1:0:0]$, $[0:1:0]$, $[0:0:1]$, and having one tangent direction infinitely near to $[0:1:0]$ and the two tangent directions infinitely near to $[0:0:1]$.

For $t=0$, we obtain explicitely an element 
$$[-xy^2z: -x^2yz: x^2y^2]\in \overline{\Bir_4^{\circ}}\setminus \Bir_4^{\circ}$$
which corresponds to the birational map of degree $2$ given by
$$\kappa(0)\colon [x:y:z]\dashmapsto [-yz:-xz:xy].$$
The polynomial which multiplies this element of $\Bir_2^{\circ}$ to get an element of $\Bir_4$ is $xy$, and is here contracted by $\kappa(0)$.
\end{example}
The following example shows that one can obtain a general map of degree $2$ (no infinitely near base-points) as a limit of special maps of degree $3$ (having infinitely near base-points).
\begin{example}
Let $\sigma_1\in \Bir(\p^2)$ be the quadratic birational involution given by
$$[x:y:z]\dashmapsto [(x-y)(x-z): y(z-x), z(y-x)]$$
whose three base-points are $$[0:0:1],[0:1:0], [1:1:1].$$ Let $\sigma_2\colon\A^1\to \Bir(\p^2)$ be the morphism given by
$$\sigma_2(t)\colon [x:y:z]\dashmapsto [y(tx+z(1-t^2)):z(x-zt): y(x-zt)]$$
For each $t$, the map $\sigma_2(t)$ is a birational quadratic involution whose three base-points are
$$[1:0:0],[0:1:0],[t:0:1],$$
and $\sigma_2(0)$ is the standard quadratic transformation.

In particular, the morphism $\sigma_2\sigma_1\colon \A^1\to \Bir(\p^2)$ gives a degeneration of a family of cubic birational maps $\sigma_2\sigma_1(t)$ for $t\not=0$ to a quadratic map $\sigma_2\sigma_1(0)$ having only proper base-points. Moreover, for $t\not=0$ the fact that $[t:0:1]$, $[1:0:0]$ and $[0:0:1]$ are collinear implies that $\sigma_1\sigma_2(t)$ has one base-point infinitely near. In coordinates, we find 
$$\sigma_2(t)\sigma_1\colon [x:y:z]\dashmapsto [(x-z)y(tx+z(t^2-t-1)): (x-y)(x+z(t-1))z : (x-z)y(x+z(t-1))]$$
for $t=0$ we find an element 
$$[-(x-z)yz: (x-y)(x-z)z: (x-z)^2y]\in \overline{\Bir_3^{\circ}}\setminus \Bir_3^{\circ}$$
which corresponds to the birational map
$$[x:y:z]\dashmapsto [-yz:(x-y)z:(x-z)y]$$
having base-points at $[1:0:0]$, $[0:1:0]$, $[0:0:1]$.
\end{example}
\section{Halphen maps}\label{Sec:Halphen}
\subsection{Preliminaries on a family of Halphen homaloidal types}
Let us recall the notation of $\S\ref{SubSecHud}$:  we consider the free $\Z$-module $V$ of infinite countable rank, whose basis is $\{e_i\}_{i\in \mathbb{N}}$ and denote by $W$ the group of automorphisms of $V$  generated by $\sigma_0$ and by the permutations of the $e_i$ fixing $e_0$ (see $\S\ref{SubSecHud}$ for the definition of $\sigma_0$, which corresponds to the action of the standard quadratic transformation).
\begin{lemma}\label{BertiniHalphen}
$(1)$ 
The automorphism $B$ of $V$ that fixes $e_i$ for $i\ge 10$ and acts on $\bigoplus_{i=0}^9 \mathbb{Z}e_i$ via the matrix
$$\left(\begin{array}{rrrrrrrrrr}
17&0&6&6&6&6&6&6&6&6\\0&1&0&0&0&0&0&0&0&0\\-6&0&-3&-2&-2&-2&-2&-2&-2&-2\\-6&0&-2&-3&-2&-2&-2&-2&-2&-2\\-6&0&-2&-2&-3&-2&-2&-2&-2&-2\\-6&0&-2&-2&-2&-3&-2&-2&-2&-2\\-6&0&-2&-2&-2&-2&-3&-2&-2&-2\\-6&0&-2&-2&-2&-2&-2&-3&-2&-2\\-6&0&-2&-2&-2&-2&-2&-2&-3&-2\\-6&0&-2&-2&-2&-2&-2&-2&-2&-3\end{array}\right) $$
respectively to the basis $(e_0,\dots,e_9)$ belongs to the group $W$.

$(2)$ Denoting by $\nu\in W$ the transposition that exchanges $e_1$ and $e_2$, the matrix of $(\nu B)^{2a}\in W$ relative to $(e_0,\dots,e_9)$ is equal to
$$\left(\begin{array}{cccccccc}
36a^2& 12a^2-6a& 12a^2+6a& 12a^2& 12a^2& \dots & 12a^2\\
-12a^2-6a& -4a^2& -4a^2-4a& -4a^2-2a& -4a^2-2a& \dots & -4a^2-2a\\
-12a^2+6a& -4a^2+4a& -4a^2& -4a^2+2a& -4a^2+2a& \dots & -4a^2+2a\\
-12a^2& -4a^2+2a& -4a^2-2a& -4a^2& -4a^2& \dots & -4a^2& \\
-12a^2& -4a^2+2a& -4a^2-2a& -4a^2& -4a^2& \dots& -4a^2\\
\vdots & \vdots & \vdots & \vdots & \vdots& &  \vdots \\
-12a^2& -4a^2+2a& -4a^2-2a& -4a^2& -4a^2& \dots & -4a^2\\
\end{array}\right)+I $$
for each integer $a\in \mathbb{Z}$, where $I\in \mathrm{GL}(10,\mathbb{Z})$ is the identity matrix.
\end{lemma}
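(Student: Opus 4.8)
The plan for part~$(1)$ is to recognise $B$ as the characteristic matrix of the \emph{Bertini involution}. Blowing up eight general points $p_2,\dots,p_9$ of $\p^2$ gives a del Pezzo surface of degree one, with canonical class $K=-3e_0+e_2+\cdots+e_9$ and $K^2=1$; the Bertini involution acts on its Picard lattice by $v\mapsto -v+2(v\cdot K)K$, i.e.\ it fixes $K$ and is $-\mathrm{id}$ on $K^{\perp}$. A one-line check shows that extending this by the identity on $e_1$ and on the $e_i$ with $i\ge10$ reproduces $B$ exactly: $B(e_0)=-e_0-6K=17e_0-6\sum_{i=2}^9 e_i$ and $B(e_i)=-e_i-2K=6e_0-e_i-2\sum_{j=2}^9 e_j$ for $2\le i\le9$. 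To conclude $B\in W$ I would use that $(17;6^8)$ is a proper homaloidal type: it satisfies the Noether equalities~(\ref{Neq}) and, after a handful of steps, Hudson's test terminates at $(1;0^k)$, so it is proper by Lemma~\ref{lem:proper} --- indeed it is the classical Bertini involution of $\p^2$. Hence by Remark~\ref{Rem:MatrixInv} its characteristic matrix lies in $W$; conjugating it by the permutation of the $e_i$ (an element of $W$) that relabels the eight base-points as $p_2,\dots,p_9$ yields $B$. Equivalently, $L:=K^{\perp}\cap\bigoplus_{i\in\{0,2,\dots,9\}}\Z e_i$ is an $E_8$ root lattice, its Weyl group $W(E_8)$ is generated by the reflections in $e_0-e_2-e_3-e_4$ (a conjugate of $\sigma_0$) and in $e_i-e_{i+1}$ for $2\le i\le8$ --- all of which lie in $W$ and fix $\Z K\oplus\Z e_1\oplus\bigoplus_{i\ge10}\Z e_i$ pointwise --- and the longest element $-\mathrm{id}\in W(E_8)$ is precisely $B$.

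For part~$(2)$ I would first record, with $S=\sum_{i=2}^9 a_i$, the action
$$(\nu B)(a_0;a_1,\dots,a_9)=\bigl(17a_0+6S;\ -6a_0-2S-a_2,\ a_1,\ -6a_0-2S-a_3,\ \dots,\ -6a_0-2S-a_9\bigr),$$
and then square it; using that $e_2,\dots,e_9$ enter symmetrically this is a short computation, giving $(\nu B)^2=I+N$ with $N$ the matrix in the statement specialised to $a=1$. The structural point is that $N^3=0$. First, $\nu B$ fixes $\kappa:=-3e_0+e_1+e_2+\cdots+e_9$ (the anticanonical class of the blow-up at all nine points $p_1,\dots,p_9$): $B(\kappa)=\kappa$ follows from the formulas above, and $\nu(\kappa)=\kappa$ since $\nu$ merely interchanges the two coefficients $+1$ of $e_1$ and $e_2$; hence $N\kappa=\bigl((\nu B)^2-I\bigr)\kappa=0$. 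Second, a direct computation of $N^2$ (again short, by symmetry) shows $N^2$ has rank one with image $\Z\kappa$ --- concretely, $\tfrac12 N^2$ is the coefficient of $a^2$ in the matrix of the statement, whose columns are all proportional to the coordinate vector of $\kappa$. Therefore $N^3=N\cdot N^2$ has image in $N(\Z\kappa)=0$, so $N^3=0$, and for every $a\in\Z$
$$(\nu B)^{2a}=(I+N)^a=I+aN+\binom{a}{2}N^2=I+a\Bigl(N-\tfrac12 N^2\Bigr)+\tfrac{a^2}{2}N^2,$$
the expansion being a valid polynomial identity in $a$ because $N$ is nilpotent. Substituting the explicit $N$ and $N^2$ gives the asserted matrix; as a check, its first column gives the homaloidal types $(36a^2+1;\,12a^2+6a,\,12a^2-6a,\,(12a^2)^7)$, which satisfy the Noether equalities~(\ref{Neq}).

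The only genuine work here is bookkeeping. The two $10\times10$ matrix products needed --- for $(\nu B)^2$ and for $N^2$ --- both collapse to essentially $3\times3$ computations once one uses the symmetry among $e_2,\dots,e_9$, and the invariance of $\kappa$ under $\nu B$ is exactly what forces the nilpotency, hence explains a priori why the entries of $(\nu B)^{2a}$ are quadratic rather than higher-degree polynomials in $a$. If one prefers to avoid computing $N^2$ directly, the same formula can instead be proved by induction on $a$, the inductive step being the matrix identity $N_aN_1=N_{a+1}-N_a-N_1$ (with $N_a$ the matrix in the statement), checked block by block. I do not expect any step to be a real obstacle: recognising $B$ as the Bertini matrix and noticing that $\nu B$ fixes $\kappa$ are the only two places where a small observation is needed, and the rest is mechanical.
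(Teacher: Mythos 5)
Your proof is correct. For part $(1)$ you take essentially the route the paper itself indicates (its proof is a two-line sketch: either run Hudson's test on the columns by hand, or recognise $B$ as the action of a Bertini involution on the blow-up of $8$ general points); your version is the Bertini one, and the $E_8$ supplement is a genuine improvement in rigour: note that properness of the type $(17;6^8)$ by itself only pins down the image of $e_0$, so to identify the whole characteristic matrix with $B$ you must quote the classical description of the Bertini action on the Picard lattice ($\mathrm{id}$ on $\Z K$, $-\mathrm{id}$ on $K^{\perp}$) -- which you do, and which the paper also relies on -- whereas writing $-\mathrm{id}$ on $K^{\perp}$ as a word in the reflections in $e_0-e_2-e_3-e_4$ (a conjugate of $\sigma_0$) and $e_i-e_{i+1}$, all of which lie in $W$ and fix $e_1$, $\Z K$ and the $e_i$ with $i\ge 10$, gives $B\in W$ with no appeal to geometry beyond the standard facts that these roots span an $E_8$ system and that $-\mathrm{id}$ is its longest element. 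For part $(2)$ your mechanism differs from the paper's: the paper computes $(\nu B)^{\pm 2}$ directly and then does induction on $\lvert a\rvert$, while you compute $(\nu B)^2=I+N$, use the invariance of $\kappa=-3e_0+e_1+\dots+e_9$ under $\nu B$ together with the rank-one identity $N^2=2P$ (where $P$ is the $a^2$-part of the displayed matrix, whose columns are indeed the multiples $-24\kappa,-8\kappa,\dots,-8\kappa$) to get $N^3=0$, and then conclude by the truncated binomial expansion $(I+N)^a=I+aN+\binom{a}{2}N^2$, valid for all $a\in\Z$ since $N$ is nilpotent. This buys a uniform treatment of negative $a$ (which Corollary~\ref{Coro:HalphenType} actually uses via $(\nu B)^{-2a}(e_0)$) and an a priori explanation of why the entries are quadratic in $a$, at the cost of one extra small matrix product; the paper's induction is more pedestrian but needs no structural observation. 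Your spot checks (the first column of $(\nu B)^2$, the Noether equalities for $(36a^2+1;12a^2+6a,12a^2-6a,(12a^2)^7)$) are consistent, and I see no gap.
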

\begin{proof}
Assertion $(1)$ can be proven by hand, following the Hudson's test on the coefficients and applying then $\sigma_0$ and permutations. It can also be seen by observing that it is the action of a Bertini involution on the blow-up of $8$ general base-points.

Assertion $(2)$ is a straight-forward computation for $a=\pm 1$ and can be proved by induction on $\lvert a\rvert$ for the other integers.
\end{proof}
\begin{remark}
If we take nine points $p_1,\dots,p_9\in \p^2$ given by the intersection of two general cubics, the blow-up $X\to \p^2$ of these points gives a Halphen surface, whose anti-canonical morphism yields an elliptic fibration.

Moreover, the Bertini involutions (see \cite[$\S(1.3)$]{BayleBeauville}) associated to $8$ of the $9$ points lift to automorphisms of $X$ having actions on $\mathrm{Pic}(X)$ which are given by the first matrix of Lemma~\ref{BertiniHalphen}, up to permutation. The second matrix, for $a=1$ is then equal to the matrix of an automorphism $\tau\in X$. This implies that the matrix for $a\in \mathbb{Z}$ is the one given by~$\tau^a$.

See \cite{Giz80} for more details on the possible automorphisms of the Halphen surfaces.
\end{remark}
\begin{corollary}\label{Coro:HalphenType}
For each $a\ge 1$,
\begin{equation} \label{36a}
\Lambda_a=(36a^2+1; 12a^2+6a, 12a^2, 12a^2, 12a^2, 12a^2, 12a^2, 12a^2, 12a^2, 12a^2-6a)
\end{equation}
is a proper homaloidal type that satisfies $(\Lambda_a)^*=\Lambda_a$.
\end{corollary}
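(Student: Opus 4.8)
The plan is to obtain $\Lambda_a$ directly from Lemma~\ref{BertiniHalphen} by applying the relevant element of $W$ to $e_0$ and invoking Proposition~\ref{Prop:HudsonW}. First I would note that since $(\nu B)^{2a}\in W$ for every $a\in\Z$, the vector $(\nu B)^{2a}(e_0)$ is, by part $(i)$ of Proposition~\ref{Prop:HudsonW}, automatically a proper homaloidal type. Reading off the first column of the matrix displayed in Lemma~\ref{BertiniHalphen}$(2)$ (after adding the identity), one gets
$$(\nu B)^{2a}(e_0)=(36a^2+1)e_0-(12a^2+6a)e_1-(12a^2-6a)e_2-(12a^2)e_3-\cdots-(12a^2)e_9,$$
so that the corresponding homaloidal type is exactly $\Lambda_a$ as written in \eqref{36a} (up to the harmless permutation of the $e_i$, which does not affect properness). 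For $a\ge 1$ all the listed multiplicities $12a^2+6a$, $12a^2$, $12a^2-6a$ are non-negative, so $\Lambda_a$ is a genuine homaloidal type with non-negative multiplicities; as a sanity check one may verify the Noether equalities $\sum m_i=3(36a^2+1)-1$ and $\sum m_i^2=(36a^2+1)^2-1$, but this is not logically needed once properness is established via $W$.

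It remains to prove $(\Lambda_a)^*=\Lambda_a$. By Remark~\ref{Rem:MatrixInv2} and Notation~\ref{Notation:LambdaDual}, $(\Lambda_a)^*$ is computed by applying the inverse of the associated element of $W$ to $e_0$; concretely, if $g(e_0)$ corresponds to $\Lambda_a$ then $(\Lambda_a)^*$ corresponds to $g^{-1}(e_0)$. Here the natural choice is $g=(\nu B)^{2a}$, whose inverse is $(\nu B)^{-2a}=((\nu B)^{2a})^{-1}$. But Lemma~\ref{BertiniHalphen}$(2)$ gives the matrix of $(\nu B)^{2a}$ for \emph{every} integer $a$, in particular for $-a$, and one checks that replacing $a$ by $-a$ in that matrix leaves the first column unchanged: indeed $36a^2$, $12a^2$ are even in $a$, while the entries $-12a^2-6a$ and $-12a^2+6a$ in the first column simply swap with each other. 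Thus $(\nu B)^{-2a}(e_0)$ yields the same multiset of multiplicities $\{12a^2+6a,\,12a^2-6a,\,(12a^2)^{\times 7}\}$, i.e.\ the same homaloidal type $\Lambda_a$, which is precisely the assertion $(\Lambda_a)^*=\Lambda_a$. (Alternatively, one observes that $B$ is the action of a Bertini involution and $\nu$ a transposition, so $(\nu B)^{2a}$ is conjugate to its inverse by the permutation $\nu$, which again forces self-duality of the type.)

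The only mild subtlety — and the step I would be most careful about — is making sure the bookkeeping between the element of $W$ associated to a map and its homaloidal type is applied correctly: Remark~\ref{Rem:MatrixInv} associates to a map an element $g\in W$ unique only up to permutations at source and target, and $(\Lambda_a)^*$ is read off from $g^{-1}(e_0)$. Since permutations of the $e_i$ do not change a homaloidal type as a multiset, this ambiguity is immaterial, and the self-duality follows cleanly from the $a\leftrightarrow -a$ symmetry of the first column of the matrix in Lemma~\ref{BertiniHalphen}$(2)$. No hard computation is involved beyond reading off one column of an already-given matrix and checking an evenness/symmetry property of its entries.
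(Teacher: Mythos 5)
Your proposal is correct and follows essentially the same route as the paper: properness is obtained from $(\nu B)^{2a}\in W$ and Proposition~\ref{Prop:HudsonW} by reading off $(\nu B)^{2a}(e_0)$ from the first column of the matrix in Lemma~\ref{BertiniHalphen}$(2)$, and self-duality from $(\nu B)^{-2a}(e_0)$ via the substitution $a\mapsto -a$ (equivalently, the swap of the $e_1,e_2$ entries), exactly as in the paper's proof. The only quibble is a wording slip ("leaves the first column unchanged" versus the entries swapping); the intended conclusion, that the multiset of multiplicities is unchanged, is what is needed and is correct.
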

\begin{proof}
According to Proposition~\ref{Prop:HudsonW}, $\Lambda_a$ is proper if and only if it belongs to the orbit $W(e_0)$ of $e_0$ under the action of $e_0$.

It follows from Lemma~\ref{BertiniHalphen} that $\nu B\in W$, and that
$$(\nu B)^{2a}(e_0)=(36a^2+1)e_0-(12a^2+6a)e_1-(12a^2-6a)e_2-\sum_{i=3}^9 12a^2e_i,$$
which corresponds to the homaloidal type $\Lambda_a$.
Hence $\Lambda_a$ is a proper homaloidal type for each $a\ge 1$.  

Moreover, the homoloidal type $(\Lambda_a)^*$ is obtained by $(\nu B)^{-2a}(e_0)$ (Remark~\ref{Rem:MatrixInv}).
Using again Lemma~\ref{BertiniHalphen}, we obtain the equality  $$(\nu B)^{-2a}(e_0)=(36a^2+1)e_0-(12a^2-6a)e_1-(12a^2+6a)e_2-\sum_{i=3}^9 12a^2e_i,$$ which implies that $(\Lambda_a)^*=\Lambda_a$.
\end{proof}

The sequence of proper homaloidal types of Example~\ref{ex3m} suffices to show that for $d$ large, there are elements in $\Bir(\p^2)_d\setminus \overline{\Bir(\p^2)_{d+1}}$. However, all such families belong in fact to $\overline{\Bir(\p^2)_{d+2}}$. The following family of examples will be sufficient to prove Theorem~\ref{Theorem:NoBoundOnK}.

\begin{proposition}\label{Prop:Halphen}
For each $a\ge 1$, there exists a birational map $\tau_a$ of degree $d=36a^2+1$, which is of type
 $$(36a^2+1; 12a^2+6a, 12a^2, 12a^2, 12a^2, 12a^2, 12a^2, 12a^2, 12a^2, 12a^2-6a),$$
and which contracts exactly $9$ irreducible curves, $7$ of degree $12a^2$, one of degree $12a^2+6a$ and one of degree $12a^2-6a$.

Moreover $\tau_a\in \Bir(\p^2)_{d}$ does not belong to $\overline{\Bir(\p^2)_{d+k}}$ if $1\le k\le a$.\end{proposition}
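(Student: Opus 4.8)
The first assertion is quick. By Corollary~\ref{Coro:HalphenType}, $\Lambda_a$ is a proper homaloidal type with $(\Lambda_a)^*=\Lambda_a$, so I would take $\tau_a$ to be a general element of the irreducible component $\Bir_{\Lambda_a}^\circ$: by Proposition~\ref{prop:BCM} and Proposition~\ref{Prop:HudsonW}$(ii)$ it is a birational map of type $\Lambda_a$ whose nine base-points are in general position in $\p^2$ and such that neither $\tau_a$ nor $\tau_a^{-1}$ has infinitely near base-points. Applying Lemma~\ref{Lemm:DecompJac} to a representative of $\tau_a$ then gives $J=\prod_{i=1}^9 p_i$ with $p_i$ irreducible of degree $m_i$, the $m_i$ being the multiplicities of $\Lambda_a$, each $p_i$ contracted by $\tau_a$ onto a base-point of $\tau_a^{-1}$; part~$(1)$ of that lemma shows these are exactly the curves contracted by $\tau_a$, which is the stated list $12a^2+6a$, $12a^2$ (seven times), $12a^2-6a$.

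For the non-degeneration, fix $k$ with $1\le k\le a$ and assume $\tau_a\in\overline{\Bir(\p^2)_{d+k}}$; by Lemma~\ref{Lemm:InverseHomeo} also $\tau_a^{-1}$ lies there. As in the proof of Proposition~\ref{Prop:Degen1}, Corollary~\ref{Coro:ClosureBird0} and Proposition~\ref{Prop:BirLclosed} give $\hat f=[\alpha f_0:\alpha f_1:\alpha f_2]\in\Bir_\Lambda$ for some proper homaloidal type $\Lambda$ of degree $d+k$, where $f=[f_0:f_1:f_2]$ represents $\tau_a^{-1}$ (no common factor) and $\deg\alpha=k$; writing $\Lambda^*=(d+k;n_1,\dots,n_s)$, Definition~\ref{Def:BirLambda} provides $\hat p_1,\dots,\hat p_s$ of degrees $n_j$ with $\prod_j\hat p_j=J(\hat f)$, each $\hat p_j$ contracted onto a point $q_j$ which is a base-point of multiplicity $\ge n_j$ of some $\hat g=[\beta g_0:\beta g_1:\beta g_2]$ representing $\tau_a$, with $\deg\beta=k$. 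Using the Euler identities one checks $J(\hat f)=\alpha^3 J(f)$ up to a constant, while $J(f)=\prod_{i=1}^9 p_i$ as above, with $p_i$ contracted by $f$ onto the $i$-th base-point $q_i$ of $\tau_a$. Since $m_i\ge 12a^2-6a>a\ge k=\deg\alpha$, no $p_i$ divides $\alpha$; from this, from the fact that $f$ has no common factor, and from the fact that $p_i$ is the only curve contracted by $f$ onto $q_i$ (Lemma~\ref{Lemm:DecompJac}), I would deduce that each $\hat p_j$ contains at most one $p_i$, that the $\hat p_j$ containing $p_i$ is contracted onto $q_i$ and divides $\alpha p_i$ (so has degree $m_i+\delta_i$ with $0\le\delta_i\le k$), and that every other $\hat p_j$ divides $\alpha$ (so has degree $\le k$); comparison with $\hat g$ also yields $\mathrm{mult}_{q_i}(\beta)\ge\delta_i$ for each $i$.

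Next I would feed this into the two Noether equalities for $\Lambda^*$. They say that $\sum_{i=1}^9\delta_i$, plus the sum of the degrees of the remaining $\hat p_j$, equals $3k$, and that $2\sum_i m_i\delta_i+\sum_i\delta_i^2$, plus the sum of squares of those remaining degrees, equals $2dk+k^2$; using $\delta_i\le k$, the analogous bound on the remaining degrees, $d=36a^2+1$ and $k\le a$, a short estimate forces $\sum_{i=1}^9\delta_i=3k$, no further $\hat p_j$ of positive degree, and $2\sum_i m_i\delta_i+\sum_i\delta_i^2=2dk+k^2$. Now $\beta$ is a plane curve of degree $k$ through the nine general base-points $q_1,\dots,q_9$ of $\tau_a$ with $\mathrm{mult}_{q_i}(\beta)\ge\delta_i$ and $\sum\delta_i=3k$. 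Let $\Gamma$ be the unique (smooth, irreducible) cubic through the $q_i$ and write $\beta=\Gamma^e\beta'$ with $\Gamma\nmid\beta'$; applying B\'ezout's theorem to $\beta'$ and $\Gamma$ shows that $\beta'$ cuts on $\Gamma$ exactly the divisor $\sum_i(\delta_i-e)q_i$, which is therefore linearly equivalent to $(k-3e)$ times a hyperplane section, i.e. $\bigoplus_i(\delta_i-e)\,q_i=o$ in the group law of the elliptic curve $\Gamma$ (with origin taken at an inflection point). Since the $q_i$ are general points of $\Gamma$, this forces $\delta_i-e=0$ for all $i$, hence $9e=\sum\delta_i=3k$; if $3\nmid k$ this is already absurd, and if $3\mid k$ then $\delta_i=k/3$ for all $i$, and substituting into $2\sum_i m_i\delta_i+\sum_i\delta_i^2=2dk+k^2$ with $\sum_i m_i=3(d-1)$ gives $2k(d-1)=2dk$, i.e. $k=0$ — a contradiction. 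As $k$ was an arbitrary integer with $1\le k\le a$, this proves $\tau_a\notin\overline{\Bir(\p^2)_{d+k}}$ for all such $k$.

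The main obstacle is the estimate in the middle: the two Noether equalities are on their own too weak to pin down $\sum\delta_i$, and the improvement to the \emph{exact} value $3k$ relies on the bounds $\delta_i\le k$ (equivalently, that the ``$\alpha$-part'' of each relevant $\hat p_j$ divides $\alpha$ and not merely $\alpha^3$), which themselves rest on identifying which curves $f$ contracts onto $q_i$ and on a computation tailored to the specific numerics of $\Lambda_a$ and to the hypothesis $k\le a$. Once $\sum\delta_i=3k$ is established, replacing the usual dimension count for linear systems through nine general points by the group law on the anticanonical cubic $\Gamma$ makes the final contradiction short.
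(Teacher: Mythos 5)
Your proposal is correct, and for most of its length it coincides with the paper's proof: the construction of $\tau_a$ from Corollary~\ref{Coro:HalphenType}, Propositions~\ref{Prop:HudsonW}, \ref{prop:BCM} and Lemma~\ref{Lemm:DecompJac} is the same, and so is the core of the non-degeneration argument (passing to $\hat f=[\alpha f_0:\alpha f_1:\alpha f_2]\in\Bir_\Lambda$ via Corollary~\ref{Coro:ClosureBird0} and Proposition~\ref{Prop:BirLclosed}, factoring $J(\hat f)=\alpha^3\prod p_i$, and using the two Noether equalities with $0\le\delta_i\le k\le a$ to exclude extra contracted factors). The ``short estimate'' you leave implicit is exactly the displayed chain of inequalities in the paper (your $\delta_i$ are its $\epsilon_i$), so nothing fails there; like the paper, though, you should justify that the part of $\hat p_j$ prime to $p_i$ divides $\alpha$ and not merely $\alpha^3$: an irreducible factor of $\alpha$ has degree at most $a$, hence cannot be contracted by $f$ since by Lemma~\ref{Lemm:DecompJac} it would have to be one of the $p_i$, of degree at least $12a^2-6a$. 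The genuine difference is the endgame. After reaching $r=9$, the paper argues that the nine general points $q_i$ support the system of curves of degree $d+k$ with multiplicities $m_i+\delta_i$, which for the proper type $\Lambda^*$ and general points is two-dimensional with irreducible general member, contradicting the degree-$k$ fixed component $\beta$ of $\hat g$; you instead record the finer inequality $\mathrm{mult}_{q_i}(\beta)\ge\delta_i$ and use B\'ezout plus the group law on the unique smooth cubic through $q_1,\dots,q_9$ to force all $\delta_i$ equal, which contradicts the second Noether equality arithmetically. Your ending is valid and self-contained, with one caveat to make explicit: ``general'' must mean that the $q_i$ avoid, besides the open set of Proposition~\ref{Prop:HudsonW}$(ii)$, the finitely many proper closed loci where a relation $\bigoplus_i c_iq_i=o$ holds on the cubic for some nonzero integer vector with $|c_i|\le a$; this is legitimate precisely because $a$ (hence the bound on the coefficients $\delta_i-e$) is fixed before $\tau_a$ is chosen, so only a Zariski-general, not very general, choice is needed. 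The trade-off is clear: the paper's conclusion is shorter but relies on the asserted dimension and irreducibility of that linear system at nine general points, while yours replaces this by elementary intersection theory on the anticanonical cubic at the cost of the extra bookkeeping on $\beta$.
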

\begin{proof}
By Corollary~\ref{Coro:HalphenType}, the type given above is a proper homaloidal type which is self-dual. Hence, by Proposition~\ref{Prop:HudsonW} there is a birational map $\tau_a$ having this type and having only proper base-points. We can moreover assume that $(\tau_a)^{-1}$ also has only proper base-points. This implies that $\tau_a$ contracts exactly  $9$ irreducible curves, $7$ of degree $12a^2$, one of degree $12a^2+6a$ and one of degree $12a^2-6a$ (Lemma~\ref{Lemm:DecompJac}).

We write $d=36a^2+1$ and $f=[f_0:f_1:f_2]\in \Bir_d^{\circ}$ the element sent on $\tau_a$ by $\pi_d$ and  suppose that $\hat{f}=[\alpha f_0:\alpha f_1:\alpha f_2]\in \Bir_{d+k}$ belongs to the closure of $\Bir_{d+k}^{\circ}$, for some homogeneous polynomial $\alpha$ of degree $k$. Hence, $\hat{f}$ belongs to $\Bir_\Lambda$ for some homaloidal type $(d+k;m_1,\dots,m_r)$ (Proposition~\ref{Prop:BirLclosed}). There exist then polynomials $p_1,\dots,p_r$ of degree $m_1,\dots,m_r$ respectively, each of them contracted by $\hat{f}$ onto points $q_1,\dots,q_r$, being base-points of $\hat{g}=[g_0:g_1:g_2]$ of multiplicity at least $m_1,\dots,m_r$ and satisfying that  $J(\hat{f})=\prod\limits_{i=1}^r p_i$. Moreover, $\pi_{d+k}(\hat{g})^{-1}=\pi_{d+k}(\hat{f})$.

Denote by $l_1,\dots,l_9$ the irreducible polynomials contracted by $[f_0:f_1:f_2]$, of degree $n_1,\dots,n_9$ respectively, with $$n_1=12a^2+6a, n_2=\dots=n_8=12a^2, n_9=12a^2-6a.$$ We have then 
$$\prod\limits_{i=1}^r p_i=J(\hat{f})=\alpha^3 J([f_0:f_1:f_2])=\alpha^3\prod\limits_{i=1}^9 l_i.$$

The polynomial $\alpha$ having degree $k\le a<12a^2-6a$, it is not a multiple of $l_i$ for any $i$. This implies that $l_il_jP$ is not contracted by $\hat{f}$ for any $1\le i,j\le 9$ and any homogeneous polynomial $P\not=0$. We can then reorder the $p_i$ such that:
\begin{enumerate}
\item
for $i=1,\dots,9$, $l_i$ divides $p_i$ and $p_i$ divides $l_i\alpha$;
\item
for $i\ge 10$, $p_i$ divides $\alpha$.
\end{enumerate}
Writing $m_i=n_i+\epsilon_i$ for $i=1,\dots,9$ and $m_i=\epsilon_i$ for $i\ge 10$ we have then $0\le \epsilon_i\le k$ for each $i$.

Applying Noether inequalities we obtain
$$\begin{array}{rcl}3k&=&(3(d+k)-3)-(3d-3)\\
&=&\sum m_i-\sum n_i\\
&=&\sum \epsilon_i\vpb\\
(d+k)^2-1&=&\sum (m_i)^2\vpb\\
&=&\sum\limits_{i\le 9} (n_i+\epsilon_i)^2+\sum\limits_{i\ge 10} (\epsilon_i)^2\vpb\\
&=&d^2-1+\sum\limits_{i\le 9} (2n_i\epsilon_i)+\sum (\epsilon_i)^2\\
&=&d^2-1+24a^2\sum\limits_{i\le 9} \epsilon_i+12a(\epsilon_1-\epsilon_9)+\sum (\epsilon_i)^2\\
\end{array}$$
The difference of both sides of the equation yields then
$$\begin{array}{rcl}
0&=&k^2+2dk-24a^2\sum\limits_{i\le 9} \epsilon_i-12a(\epsilon_1-\epsilon_9)-\sum (\epsilon_i)^2\\
&=&k^2+2dk-24a^2(3k-\sum\limits_{i\ge 10} \epsilon_i)-12a(\epsilon_1-\epsilon_9)-\sum (\epsilon_i)^2\\
&=&k^2+2k(d-36a^2)+\sum\limits_{i\ge 10} \epsilon_i(24a^2-\epsilon_i)-12a(\epsilon_1-\epsilon_9)-\sum\limits_{i\le 9} (\epsilon_i)^2\\
&\ge & k^2+2k+\sum\limits_{i\ge 10} \epsilon_i(24a^2-\epsilon_i)-12ak-9k^2\\
&=& 2k(1-6a-4k)+\sum\limits_{i\ge 10} \epsilon_i(24a^2-\epsilon_i)\\
&\ge & 2k-20a^2+\sum\limits_{i\ge 10} \epsilon_i(24a^2-\epsilon_i).
\end{array}$$
If $\epsilon_j>0$ for some $j>9$, we find 
$$\sum\limits_{i\ge 10} \epsilon_i(24a^2-\epsilon_i)\ge 24a^2-k>20a^2-2k,$$
so the above inequality implies that $\epsilon_j=0$ for all $j\ge 10$, which means that $r=9$.

Note that $f=[f_0:f_1:f_2]$ contracts $l_1,\dots,l_9$ onto $q_1,\dots,q_9$, which are then the base-points of $\pi_d(f)^{-1}=(\tau_a)^{-1}$. This implies that $\hat{f}$ contracts $p_1,\dots,p_9$ onto $q_1,\dots,q_9$.

Moreover, the points $q_1,\dots,q_9$ are base-points of $\hat{g}=[g_0:g_1:g_2]$ of multiplicity at least $m_1,\dots,m_9$. As we can choose the $9$ points in general position and since $(d+k;m_1,\dots,m_9)$ is a proper homaloidal type, the linear system of curves of degree $d+k$ having multiplicity at least $m_i$ at $q_i$ has dimension $2$ and corresponds to a birational map of degree $d+k$. This implies that the linear system $\sum \lambda_i g_i$ is irreducible, which leads to a contradiction.
\end{proof}

We can now finish the text with the proof of Theorem~\ref{Theorem:NoBoundOnK}:
\begin{proof}[Proof of Theorem~$\ref{Theorem:NoBoundOnK}$]
The first part follows from Proposition~\ref{Prop:Halphen}, the second part follows from Corollary~\ref{Coro:kkover3}.
\end{proof}


\begin{thebibliography}{Dem70xx}

\bibitem[A-C02]{alberich}
\textsc{Maria Alberich-Carrami\~nana}:
\textit{Geometry of the plane Cremona maps},
Lecture Notes in Mathematics \textbf{1769}, Springer-Verlag, Berlin, 2002.

\bibitem[BB00]{BayleBeauville}
\textsc{Lionel Bayle, Arnaud Beauville}:
\textit{Birational involutions of P2}. Kodaira's issue. 
Asian J. Math. \textbf{4} (2000), no. 1, 11--17.


\bibitem[BCM13]{BCM}
\textsc{Cinzia Bisi, Alberto Calabri, Massimiliano Mella}:
\textit{On plane Cremona transformations of fixed degree},
J. Geom. Anal. \textbf{25} (2015),  no. 2, 1108--1131.


\bibitem[Bla10]{Bl}
\textsc{J\'er\'emy Blanc}:
\textit{Groupes de Cremona, connexit\'e et simplicit\'e.}
Ann. Sci. Ec. Norm. Sup\'er. (4) {\bf 43} (2010), no. 2, 357-364.

\bibitem[BC13]{BC}
\textsc{J\'er\'emy Blanc, Serge Cantat}:
\textit{Dynamical Degrees of Birational transformations of projective surfaces},
to appear on J. Amer. Math. Soc., DOI: 10.1090/jams831.

\bibitem[BF13]{BF}
\textsc{J\'er\'emy Blanc, Jean-Philippe Furter}:
\textit{Topologies and structures of the Cremona groups},
Ann. of Math. \textbf{178} (2013), no. 3, 1173--1198.

\bibitem[Can11]{Can11}
\textsc{Serge Cantat}:
\textit{Sur les groupes de transformations birationnelles des surfaces},
 Ann. of Math. \textbf{174} (2011), no. 1, 299--340.

\bibitem[Can13]{Can13}
\textsc{Serge Cantat}:
\textit{Morphisms between Cremona groups and a characterization of rational varieties},
 Compos. Math. \textbf{150} (2014), no. 7, 1107--1124. 


\bibitem[Dem70]{De}  
\textsc{Michel Demazure}:
\textit{Sous-groupes alg\'ebriques de rang maximum du groupe de Cremona},
Ann. Sci. \'Ecole Norm. Sup. (4) {\bf 3} (1970), 507-588.

\bibitem[EF04]{EF04} 
\textsc{Eric Edo, Jean-Philippe Furter}:
\textit{Some families of polynomial automorphisms}, J. of Pure and Applied Algebra {\bf 194} (2004), 263-271.

\bibitem[EF04]{EL13} 
\textsc{Eric Edo, Drew Lewis}:
\textit{Some families of polynomial automorphisms III}, J. of Pure and Applied Algebra {\bf 219} (2015), no. 4, 864--874.

\bibitem[Fur97]{Fur97}
\textsc{Jean-Philippe Furter}:
\textit{On the variety of automorphisms of the affine plane},
 J. Algebra {\bf 195} (1997), 604-623.
 
\bibitem[Fur02]{Fur02}
\textsc{Jean-Philippe Furter}:
\textit{On the length of automorphisms of the affine plane},
 Math. Ann. {\bf 322} (2002), 401-411.

\bibitem[Fur13]{Fur13}
\textsc{Jean-Philippe Furter}:
\textit{Polynomial composition rigidity and plane polynomial automorphisms},
 J. London Math. Soc. \textbf{91} (2015), no. 1, 180--202.

\bibitem[Giz80]{Giz80}
\textsc{Marat. H. Gizatullin}:
\textit{Rational G-surfaces.} Izv. Akad. Nauk SSSR Ser. Mat., {\bf 44}(1):110--144, 239, 1980.


\bibitem[Hud27]{Hudson}
\textsc{Hilda Hudson}:
\textit{Cremona Transformations in Plane and Space},
Cambridge University Press, Cambridge, 1927.

\bibitem[Pop13]{Popov}
\textsc{Vladimir L. Popov}:
\textit{Tori in the Cremona groups}.
Izv. Math. {\bf 77} (2013), no. 4, 742--771. 

\bibitem[PR13]{PanRit}
\textsc{Ivan Pan, Alvaro Rittatore}:
\textit{Some remarks about the Zariski topology of the Cremona group.}
http://arxiv.org/abs/1212.5698.

\bibitem[Ser10]{Se}
\textsc{Jean-Pierre Serre}:
\textit{Le groupe de Cremona et ses sous-groupes finis.} 
S\'eminaire Bourbaki. Volume 2008/2009. Ast\'erisque No. {\bf 332} (2010), Exp. No. 1000, vii, 75--100.

\bibitem[Sha67]{Sha}
\textsc{Igor R. Shafarevich}:
\textit{Algebraic surfaces.} 
Proc. Steklov Inst. Math. {\bf 75}, 1967.

\end{thebibliography}
\end{document}